\theoremstyle{definition}
\newtheorem{theorem}{Theorem}[section]
\newtheorem{definition}[theorem]{Definition}
\newtheorem{lemma}[theorem]{Lemma}
\newtheorem{remark}[theorem]{Remark}
\newtheorem{corollary}[theorem]{Corollary}
\newtheorem{example}[theorem]{Example}
\newtheorem*{remark*}{Remark}
\numberwithin{equation}{subsection}
\numberwithin{theorem}{subsection}
\newcommand{\mf}{\mathbf}
\newcommand{\mc}{\mathcal}
\newcommand{\mb}{\mathbb}
\newcommand{\mr}{\mathrm}
\newcommand{\dist}{\mathrm{dist}}
\newcommand{\pa}{\partial}
\DeclareMathOperator{\area}{Area}
\DeclareMathOperator{\di}{div}
\title[Nonlocal Energy Functionals \& Determinantal Point Processes]{Nonlocal Energy Functionals and Determinantal Point Processes on Non-Smooth Domains}
\date{\today}
\author{Zhengjiang Lin}
\address{Courant Institute of Mathematical Sciences, New York University, 251 Mercer Street, New York, NY10012, USA}
\email{malin@nyu.edu}
\begin{document}

\begin{abstract}
	Given a nonnegative integrable function $J$ on $\mb{R}^n$, we relate the asymptotic properties of the nonlocal energy functional
		\begin{equation*}
			\int_{\Omega} \int_{\Omega^c}   J \bigg(\frac{x-y}{t}\bigg) \ dx dy
		\end{equation*}
	as $t \to 0^+$ with the boundary properties of a given domain $\Omega \subset \mathbb{R}^n$. 
	Then, we use these asymptotic properties to study the fluctuations of many determinantal point processes, and show that their variances measure the Minkowski dimension of $\pa \Omega$.
\end{abstract}

\maketitle

\section{Introduction}\label{Sec: Introduction}

	A point process on $\mb{R}^n$ is defined as a measurable map from a probability space $(\Omega, \mc{B}(\Omega) ,\mc{P})$ to random configurations of points $\{x_i{\}}_{i \in I}$, where $I$ is a countable index set.
	More precisely, a point process on $\mb{R}^n$ is a probability measure on the set of locally finite counting measures on $\mb{R}^n$.
	Usually, a point process $X$ takes the form
		\begin{equation}
			X = \sum_{i \in I} \delta_{x_i},
		\end{equation}
	where $\delta_x$ is the dirac measure at point $x \in \mb{R}^n$, and the configurations of $x_i$'s follow a probability distribution.
	Locally finiteness means that, almost surely, $X(K) < \infty$ for any compact set $K \subset \mb{R}^n$.
	We will define this precisely in Section~\ref{Sec: Det Process} and readers can also see~~\cite{AGZ10} for more details. 
	We define $X(\varphi) \equiv \sum_{i \in I} \varphi(x_i)$ for any appropriate function $\varphi$, provided that the summation on the right-hand side is well-defined.
	In particular, if $A \subset \mb{R}^n$ is a measurable set, we define $X(A) \equiv X(\mathds{1}_A)$ to denote the number of $x_i$'s that are contained in the set $A$. Here, $\mathds{1}_A$ is the indicator function of $A$.

	We are interested in the variance $\mr{Var}(X(\varphi))$. In most of the models from either physics or probability, there is a nonnegative integrable function $J = J_{X}$ on $\mb{R}^n$, depending on the point process $X$, such that
		\begin{equation}\label{Eqn: Variance is a Nonlocal Functional}
			\mr{Var}(X(\varphi)) = \int_{\mb{R}^n \times \mb{R}^n} { \big|\varphi(x) - \varphi(y) \big| }^2 \cdot J(x-y) \ dx dy .
		\end{equation}
	Such functionals have already been extensively studied in analysis and geometry to investigate the asymptotic behaviors of a particular family of $\{J_t{\}}$'s as $t$ converges to a critical number in $\mb{R}$ or $\infty$. 
	For example, in the fractional Sobolev spaces $W^{t,p}(\mb{R}^n)$ (or $W^{t,p}(A)$ with $A$ being a smooth bounded domain in $\mb{R}^n$), a measurable function $\varphi$ is said to be in $W^{t,p}(A)$ if the functional
		\begin{equation}
			{||\varphi||}^p _{W^{t,p}} = \int_A \int_A \frac{{|\varphi(x) - \varphi(y)|}^p}{{|x-y|}^{n+tp}} \ dx dy
		\end{equation}
	is finite. One would like to see the asymptotic behaviors (orders) of such functionals when $t \to 1^-$, which diverges for smooth nonconstant $\varphi$. For further discussions on $W^{t,p}$-spaces, one may refer to~~\cite{BBM01}, where the authors associated the limit of the functional (\ref{Eqn: Variance is a Nonlocal Functional}) for a general family of $\{J_t{\}}$ to the $W^{1,2}$-norm of $\varphi$. We will also briefly discuss the Sobolev spaces in Section~\ref{Sec: Limit Sobolev}.
	
	When $\varphi = \mathds{1}_{\Omega}$ is the indicator function of a measurable set $\Omega \subset \mb{R}^n$, the functional (\ref{Eqn: Variance is a Nonlocal Functional}) is closely related to the study of the nonlocal perimeter and nonlocal minimal surfaces~~\cite{CRS10, MRT19}.
	In this manuscript, we also focus on situations where $\varphi = \mathds{1}_{\Omega}$.
	Our primary purpose is to study the asymptotic behavior (order) of the functional
		\begin{equation}
			\mc{J}_t (\Omega) \equiv \frac{1}{2}\int_{\mb{R}^n \times \mb{R}^n} { \big|\mathds{1}_{\Omega}(x) - \mathds{1}_{\Omega}(y)  \big| }^2 \cdot J \bigg(\frac{x-y}{t}\bigg) \ dx dy. = \int_{\Omega} \int_{\Omega^c}   J \bigg(\frac{x-y}{t}\bigg) \ dx dy,
		\end{equation}
	as $t \to 0^{+}$. 
	Intuitively, if $J$ has a fast-decaying tail, then as $t \to 0^{+}$, only those pairs $(x,y)$ with $\dist(x,y) \sim t$, or equivalently, near the boundary $\pa \Omega$ within a distance $\sim t$, will contribute to the first order term in $\mc{J}_t (\Omega)$.
    We note that although $\mathds{1}_{\Omega}$ is not weakly differentiable, it can be in the space of functions of bounded total variation (BV space). 
    This functional space includes indicator functions of bounded domains with Lipschitz boundaries.
	The asymptotic behavior for functionals with BV test functions was also studied in~~\cite{D02}, which we will also discuss in Section~\ref{Sec: Limit BV}.

	In addition to the domains with ``locally flat'' boundaries mentioned above, one of our novel parts is that we will also discuss cases when $\pa \Omega$ does not locally  ``look like'' a hyperplane, including the well-known Koch snowflake (Figure~\ref{Fig: Snowflake}) and a family of fractals constructed in a similar way as the standard Koch snowflake.
	Also, for general $\Omega \subset \mb{R}^n$ whose boundary $\pa \Omega$ has Minkowski dimension (or Hausdorff dimension) larger than $n-1$, we can give a way to quantify the rate of decay with respect to $t$, i.e., we will show that  $\mc{J}_t (\Omega) \sim t^{\beta}$, where $\beta$ depends on the Minkowski dimension of $\pa \Omega$ and $n$.
	Moreover, if the boundary $\pa \Omega$ is self-similar in some way, it is possible to show the existence of the limit $\lim_{t \to 0^+} \mc{J}_t(\Omega) / t^{\beta}$. See our Theorem~\ref{Thm: Main Upper Lower Bounds} and Theorem~\ref{Thm: Main Twistedflake limit}.

	Our technical sections, Section~\ref{Sec: Bounds} and Section~\ref{Sec: All Limits}, do not rely on any specific probability models.
	In Section~\ref{Sec: Main Results}, we will present our main results without referring to probability. And we will present the main results in probability language in Section~\ref{Sec: Main Results in probability}.
	At the end of Section~\ref{Sec: Main Results in probability}, we will briefly discuss those $J$ which can possibly change signs and give an example in Remark~\ref{Rmk: J Change Signs}.
	At the end of this manuscript, Section~\ref{Sec: Det Process}, we will give the proofs of the results in Section~\ref{Sec: Main Results in probability} for the complex Ginibre ensemble.

\subsection{Results on Nonlocal Energy Functionals}\label{Sec: Main Results}

	\indent

	Recall that for a bounded measurable set $A \subset \mb{R}^n$, we define the $\alpha$-dimensional upper Minkowski content of $A$ as
		\begin{equation}
			\overline{\mc{M}}^{\alpha}(A) \equiv \limsup_{t \to 0^{+}} \frac{|B(A,t)|}{t^{n-\alpha}},
		\end{equation}
	and the $\alpha$-dimensional lower Minkowski content of $\Omega$ as
		\begin{equation}
			\underline{\mc{M}}^{\alpha}(A) \equiv \liminf_{t \to 0^{+}} \frac{|B(A,t)|}{t^{n-\alpha}}.
		\end{equation}
	Here,
		\begin{equation}
			B(A,t) \equiv \{ x \in \mb{R}^n \ | \ \dist(x,A) = \inf_{y \in A} \dist(x,y) <t {\}},
		\end{equation}
	and $|B(A,t)|$ is the Lebesgue measure (volume) of $B(A,t)$.
	The upper and lower Minkowski dimensions of $A$ are defined by 
		\begin{equation}
			\overline{\dim}_{\mc{M}}(A) = \sup \{ \alpha \geq 0 \ | \  \overline{\mc{M}}^{\alpha}(A) = \infty {\}} = \inf \{ \alpha \geq 0 \ | \  \overline{\mc{M}}^{\alpha}(A) = 0 {\}},
		\end{equation}
	and
		\begin{equation}
			\underline{\dim}_{\mc{M}}(A) = \sup \{ \alpha \geq 0 \ | \  \underline{\mc{M}}^{\alpha}(A) = \infty {\}} = \inf \{ \alpha \geq 0 \ | \  \underline{\mc{M}}^{\alpha}(A) = 0 {\}}.
		\end{equation}
	Now, we assume that $\Omega \subset \mb{R}^n$ is a bounded measurable set with $|\Omega| >0$, and with a topological boundary $\pa \Omega$ such that 
		\begin{equation}\label{Eqn: Minkowski Contents Bounds}
			0 < M_2 \leq \underline{\mc{M}}^{\alpha}(\pa \Omega) \leq \overline{\mc{M}}^{\alpha}(\pa \Omega) \leq M_1 < \infty, 
		\end{equation}
	where $M_1, M_2$ are two positive constants and $\alpha \in [n-1,n]$. This actually implies that $\pa \Omega$ is of Minkowski dimension $\alpha$.

	For a nonnegative function $J \in L^1(\mb{R}^n)$, let use consider the functional
		\begin{equation}
			\mc{J}_t (\Omega) \equiv \frac{1}{2}\int_{\mb{R}^n \times \mb{R}^n} { \big|\mathds{1}_{\Omega}(x) - \mathds{1}_{\Omega}(y)  \big| }^2 \cdot J \bigg(\frac{x-y}{t}\bigg) \ dx dy. = \int_{\Omega} \int_{\Omega^c}   J \bigg(\frac{x-y}{t}\bigg) \ dx dy,
		\end{equation}
	where $\Omega^c = \mb{R}^n \backslash \Omega$.
	For all theorems in Section~\ref{Sec: Main Results}, we will assume that $\int_{\mb{R}^n} J(z) \cdot {|z|}^n \ dz <\infty$. 
	Notice that this together with the assumption that $J \in L^1 (\mb{R}^n)$ shows that there is a constant $C_J>0$, such that for any $\gamma \in [0,n]$, $\int_{\mb{R}^n} J(z) \cdot {|z|}^{\gamma} \ dz \leq C_J < \infty $.

	\begin{theorem}\label{Thm: Main Upper Lower Bounds}
		\textit{
		For the kernel $J$, we also assume that there are two positive constants $c_J, a_J$ such that when $|x| \leq a_J$, $J(z) \geq c_J >0$.
		For the domain $\Omega$, we further assume that there is a positive constant $D_{\pa \Omega}$, such that  for all $x \in \pa \Omega$ and all $t \in (0,1)$, $\min\{|B(x,t) \cap \Omega|, |B(x,t) \cap \Omega^c| {\}} \geq D_{\pa \Omega} \cdot t^n$.
		Then, there is a constant $O_1$ depending on $n, C_J$, and a constant $O_2$ depending on $n, a_J,c_J,D_{\pa \Omega}$, such that
			\begin{equation}
					0 < O_2 \cdot M_2 \leq  \liminf_{t \to 0^+} \frac{\mc{J}_t (\Omega)}{t^{2n-\alpha}} \leq \limsup_{t \to 0^+} \frac{\mc{J}_t (\Omega)}{t^{2n-\alpha}} \leq O_1 \cdot M_1 < \infty. 
			\end{equation}
		}
	\end{theorem}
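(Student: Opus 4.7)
I would prove the upper and lower bounds separately, each reducing the double integral defining $\mc{J}_t(\Omega)$ to a one-variable integral of $J$ against a quantity controlled by the Minkowski content of $\pa\Omega$.

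For the \emph{upper bound}, the first step is the substitution $u = (x - y)/t$, which yields
\begin{equation*}
\mc{J}_t(\Omega) \;=\; t^n \int_{\mb{R}^n} J(u) \, \big| \{ x \in \Omega : x - tu \in \Omega^c {\}} \big| \, du.
\end{equation*}
If $x \in \Omega$ and $x - tu \in \Omega^c$, the segment joining the two points must cross $\pa\Omega$, so $\dist(x, \pa\Omega) \leq t|u|$ and the inner measure is bounded by $|B(\pa\Omega, t|u|)|$. For any $\epsilon > 0$, the definition of $\overline{\mc{M}}^\alpha(\pa\Omega)$ supplies an $s_0 > 0$ such that $|B(\pa\Omega, s)| \leq (M_1 + \epsilon) s^{n-\alpha}$ for $s \leq s_0$, while the boundedness of $\Omega$ supplies a uniform bound $|B(\pa\Omega, s)| \leq C(1 + s^n)$. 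Splitting the $u$-integral at $|u| = s_0/t$ and using the moment hypothesis $\int J(u)|u|^\gamma \, du \leq C_J$ for $\gamma \in [0, n]$, the near-origin part contributes at most $O_1 (M_1 + \epsilon) t^{2n-\alpha}$ with $O_1$ depending only on $n$ and $C_J$, while the tail is dominated by $t^{2n} \int_{|u| > s_0/t} J(u)|u|^n \, du$, which is $o(t^{2n-\alpha})$ as $t \to 0^+$ because the integral tends to zero. Sending $\epsilon \to 0$ gives the claimed $\limsup$ bound.

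For the \emph{lower bound}, positivity of $J$ on $B(0, a_J)$ yields
\begin{equation*}
\mc{J}_t(\Omega) \;\geq\; c_J \int_\Omega \int_{\Omega^c} \mathds{1}_{|x-y| \leq a_J t} \, dy \, dx.
\end{equation*}
Set $r = a_J t / 2$ and extract a maximal $2r$-separated subset $\{p_i{\}}_{i=1}^N \subset \pa\Omega$. The balls $B(p_i, r)$ are pairwise disjoint, while maximality forces $\pa\Omega \subset \bigcup_i B(p_i, 2r)$ and hence $B(\pa\Omega, r) \subset \bigcup_i B(p_i, 3r)$. Combined with the lower Minkowski content bound $|B(\pa\Omega, r)| \geq (M_2 - \epsilon) r^{n - \alpha}$ valid for $r$ small, this forces $N \geq c(n) (M_2 - \epsilon) r^{-\alpha}$. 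The two-sided density hypothesis provides at least $D_{\pa\Omega} r^n$ mass in each of $B(p_i, r) \cap \Omega$ and $B(p_i, r) \cap \Omega^c$; any pair $(x, y)$ drawn from these two sets satisfies $|x - y| \leq 2r = a_J t$, and disjointness of the balls ensures such a pair is counted by at most one index. Summing the $N$ contributions of size $c_J D_{\pa\Omega}^2 r^{2n}$ gives $\mc{J}_t(\Omega) \geq O_2 (M_2 - \epsilon) t^{2n - \alpha}$ with $O_2$ depending on $n, a_J, c_J, D_{\pa\Omega}$, and $\epsilon \to 0$ finishes the $\liminf$ bound.

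The main obstacle I expect is the upper-bound tail: the hypothesis $\int J(u) |u|^n \, du < \infty$ is borderline exactly for pairs with $|x - y| \gg t$, where the sharp available estimate $|B(\pa\Omega, s)| \lesssim s^n$ cannot be improved by the Minkowski dimension assumption. Passing through the cutoff $|u| \sim s_0/t \to \infty$ is what lets the tail of the $n$-th moment of $J$ contribute an $o(1)$ factor, thereby making this correction $o(t^{2n})$ rather than comparable to the main scale $t^{2n - \alpha}$.
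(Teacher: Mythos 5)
Your proposal is correct, and it splits into two halves of different character relative to the paper. The upper bound is essentially the paper's Lemma~\ref{Lem: General Upper Bound} in different coordinates: the paper foliates the $y$-integral by spheres $\pa B(x,r)$ and arrives at $\int_{\mb{R}^n}|\Gamma_{|z|}|\,J(z/t)\,dz$ with $|\Gamma_s|\leq C_1\min\{s^{n-\alpha},1\}$ assumed globally, while your substitution $u=(x-y)/t$ lands on the same object $t^n\int J(u)\,|B(\pa\Omega,t|u|)|\,du$; your $\epsilon$--$s_0$ splitting is slightly more careful in that it derives the needed tube bound from the $\limsup$ definition of $\overline{\mc{M}}^{\alpha}$ rather than postulating it for all scales, at the cost of having to check that the tail beyond $|u|=s_0/t$ is $o(t^{2n-\alpha})$ (which your moment hypothesis handles, and which is anyway harmless here since $t^{2n}\leq t^{2n-\alpha}$). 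The lower bound is where you genuinely diverge: the paper's Lemma~\ref{Lem: General Lower Bound} integrates over the one-sided inner tube $N_{(a_J/2)t}(E)$ and needs the hypothesis $|N_t(E)|\geq C_2 t^{n-\alpha}$, which forces Corollary~\ref{Cor: Main Upper and Lower Limits} to remark, somewhat loosely, that one must ``combine both parts $B(E,t)\cap\Omega$ and $B(E,t)\cap\Omega^c$'' to convert the two-sided bound $|B(\pa\Omega,t)|\geq M_2 t^{n-\alpha}$ into a one-sided one. Your maximal $2r$-separated set $\{p_i\}$ on $\pa\Omega$ sidesteps this entirely: the packing count $N\gtrsim |B(\pa\Omega,r)|\,r^{-n}$ uses the full two-sided neighborhood, and the two-sided density hypothesis then supplies $D_{\pa\Omega}^2 r^{2n}$ mass per disjoint ball. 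This is cleaner and self-contained at exactly the point where the paper's write-up is thinnest, and it yields the same constants ($O_1$ from $n, C_J$; $O_2$ from $n, a_J, c_J, D_{\pa\Omega}$).
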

	We remark that the density lower bounds condition for points on $\pa \Omega$, i.e., the existence of $D_{\pa \Omega}$, is satisfied by a class of domains known as NTA (nontangentially accessible) domains, which encompass domains with Lipschitz boundaries, quasiballs, and many self-similar fractals.
	This concept was first introduced by Jerison and Kenig in~~\cite{JK82}, and one can also find more related domains in the recent survey paper~~\cite{T17}.

	For sets of finite perimeter (also known as Caccioppoli sets; see Definition~\ref{Def: Finite Perimeter}), the limit was explicitly computed in~~\cite{D02}. Let us cite a theorem in~~\cite{D02}. In particular, sets of finite perimeter include bounded domains with Lipschitz boundaries.
	\begin{theorem}[\cite{D02}, Theorem 1]\label{Thm: Main Finite Perimeter Limit}
		\textit{
		If $\Omega$ is a set of finite perimeter and $J(z) = J(|z|)$ is radially symmetric, then there is a positive dimensional constant $K(n)$, such that
			\begin{equation}
				\lim_{t \to 0^{+}} \frac{\mc{J}_t (\Omega)}{t^{n+1}}  = 2 K(n) \cdot ||(J(z) \cdot |z|) || _{L^1(\mb{R}^n)} \cdot \mc{H}^{n-1}(\pa^* \Omega) .
			\end{equation}
		}
	\end{theorem}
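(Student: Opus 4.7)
The plan is to reduce the double integral to a translation-difference integral on a single copy of $\mb{R}^n$, then invoke the classical characterization of BV functions via integrated difference quotients. Concretely, I would change variables $y = x + tz$ (so $dy = t^n \, dz$) to write
\begin{equation*}
\mc{J}_t(\Omega) = t^n \int_{\mb{R}^n} J(z) \, \Big|\{x \in \Omega : x + tz \notin \Omega\}\Big| \, dz.
\end{equation*}
Because $J$ is radially symmetric, $J(z) = J(-z)$, so I can symmetrize the inner quantity and rewrite it in terms of the symmetric difference:
\begin{equation*}
\mc{J}_t(\Omega) = \tfrac{1}{2} \, t^n \int_{\mb{R}^n} J(z) \, \big|\Omega \triangle (\Omega - tz)\big| \, dz.
\end{equation*}

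Next I would recall the standard fact that for $\Omega$ of finite perimeter, and for each fixed $z \in \mb{R}^n$,
\begin{equation*}
\lim_{t \to 0^+} \frac{\big|\Omega \triangle (\Omega - tz)\big|}{t} = \int_{\pa^* \Omega} |z \cdot \nu_{\Omega}(x)| \, d\mc{H}^{n-1}(x),
\end{equation*}
where $\nu_{\Omega}$ is the measure-theoretic outer unit normal on the reduced boundary. This is the translation-difference characterization of $|D \mathds{1}_\Omega|$ in BV theory. I would also need the accompanying uniform bound $|\Omega \triangle (\Omega - tz)| \leq t |z| \, P(\Omega)$ (valid for all $t > 0$ and $z \in \mb{R}^n$), which is obtained either by approximating $\mathds{1}_\Omega$ by smooth functions or directly from the BV seminorm.

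With the uniform bound in hand, divide by $t^{n+1}$ and invoke dominated convergence (the dominating function being $J(z) |z| P(\Omega)$, integrable by the assumption $\int J(z)|z| \, dz < \infty$) to pass the limit inside the $z$-integral:
\begin{equation*}
\lim_{t \to 0^+} \frac{\mc{J}_t(\Omega)}{t^{n+1}} = \tfrac{1}{2} \int_{\mb{R}^n} J(z) \left( \int_{\pa^* \Omega} |z \cdot \nu_{\Omega}(x)| \, d\mc{H}^{n-1}(x) \right) dz.
\end{equation*}
After a Fubini swap, the inner integral over $z$ is $\int_{\mb{R}^n} J(z) \, |z \cdot \nu| \, dz$ with $\nu = \nu_{\Omega}(x)$ a unit vector; since $J$ is radial this is independent of $\nu$ and equals, after polar coordinates, $\big(|S^{n-1}|^{-1} \int_{S^{n-1}} |\omega_1| \, d\omega\big) \cdot \|J(z)|z|\|_{L^1}$. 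Absorbing the dimensional constant into $K(n)$ yields the stated identity.

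The step I expect to demand the most care is the exchange of limit and integral: one must verify the pointwise BV limit for almost every $z$ and secure a dominating function uniform in $t$. The pointwise limit is the nontrivial BV ingredient (it is essentially the identification of $|D\mathds{1}_\Omega|$ with $\mc{H}^{n-1} \lfloor \pa^* \Omega$ plus rotational invariance of the normal distribution), and the domination requires the finite first moment hypothesis on $J$, which is already in force throughout Section~\ref{Sec: Main Results}. Everything else is a change of variables, symmetrization using the radial symmetry of $J$, and a Fubini computation.
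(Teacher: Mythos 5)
Your argument is correct, but it follows a genuinely different route from the paper: the paper offers no proof of this statement at all --- it quotes it as Theorem~1 of~\cite{D02} and, in Section~\ref{Sec: Limit BV}, obtains the indicator-function case (Corollary~\ref{Cor: Finite Perimeter Limit}) by specializing D\'avila's general result for $f \in \mr{BV}(A)$ (Theorem~\ref{Thm: BV Limit}), whose proof must handle arbitrary BV functions and is correspondingly heavier. You instead give a self-contained proof tailored to $f = \mathds{1}_{\Omega}$: change of variables $y = x+tz$, symmetrization to $\tfrac{1}{2} t^n \int J(z)\, |\Omega \triangle (\Omega - tz)|\, dz$, the translation estimate $|\Omega \triangle (\Omega - tz)| \leq t|z|\, \mc{H}^{n-1}(\pa^*\Omega)$, the pointwise limit $\lim_{t \to 0^+} t^{-1} |\Omega \triangle (\Omega - tz)| = \int_{\pa^*\Omega} |z \cdot \nu_{\Omega}|\, d\mc{H}^{n-1}$ (which in fact holds for \emph{every} $z$, by mollifying $\mathds{1}_{\Omega}$, using that convolution contracts the translation difference, and lower semicontinuity of the directional variation), and dominated convergence; the finite first moment of $J$ needed for the dominating function is indeed in force from the standing assumptions of Section~\ref{Sec: Main Results}. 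What your approach buys is an elementary, transparent argument for exactly the case the theorem asserts; what it gives up is the general BV statement, which the paper also uses elsewhere (e.g.\ for Sobolev test functions).

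One point you should not gloss over is the constant: your computation does \emph{not} reproduce the displayed prefactor. Carrying it through gives $\lim_{t\to 0^+} t^{-(n+1)} \mc{J}_t(\Omega) = \tfrac{1}{2}\bigl(\fint_{S^{n-1}} |\omega_1|\, d\omega\bigr) \cdot \|J(z)|z|\|_{L^1(\mb{R}^n)} \cdot \mc{H}^{n-1}(\pa^*\Omega) = \tfrac{1}{2} K(n) \cdot \|J(z)|z|\|_{L^1(\mb{R}^n)} \cdot \mc{H}^{n-1}(\pa^*\Omega)$, with $K(n)$ exactly as in (\ref{Eqn: K(n) for Sets of Finite Perimeter}), rather than $2K(n)(\cdots)$. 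A direct check in $n=1$ with $\Omega=(0,1)$ and $J = \tfrac{1}{2}\mathds{1}_{[-1,1]}$, where $\mc{J}_t = t^2/2$, confirms your normalization. So do not ``absorb'' the mismatch into $K(n)$: the discrepancy appears to originate in the normalization convention $\int_{\mb{R}^n} J(z)|z|\, dz = \int_0^{\infty} J(r) r^n\, dr$ written in Section~\ref{Sec: Limit BV}, which omits a factor of $|S^{n-1}|$. This affects only the explicit constant, not the existence of the limit or its proportionality to $\mc{H}^{n-1}(\pa^*\Omega)$, and it is not a gap in your reasoning.
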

	Here, $K(n)$ is explicitly computed in (\ref{Eqn: K(n) for Sets of Finite Perimeter}), and $\mc{H}^{n-1}$ denotes the $(n-1)$-dimensional Hausdorff measure (or surface measure for smooth hypersurfaces). $\pa ^* \Omega$ is called the reduced boundary of $\Omega$.
	By Federer's theorem (see, for example, Theorem 16.2 in~~\cite{M12}), up to a set of zero $(n-1)$-dimensional Hausdorff measure, this reduced boundary $\pa^* \Omega$ equals the essential boundary $\pa^e \Omega$ of $\Omega$, which is 
		\begin{equation}
			\pa^e \Omega \equiv \mb{R}^n \backslash (\Omega^{(0)} \cup \Omega^{(1)}),
		\end{equation}
	where we define the set of points of density $\beta$ of $\Omega$ for $\beta \in [0,1]$ as
		\begin{equation}
			\Omega^{(\beta)} \equiv \bigg\{ x \in \mb{R}^n \ \bigg| \ \lim_{r \to 0^+}\frac{|\Omega \cap B(x,r)|}{|B(x,r)|} = \beta \bigg{\}}.
		\end{equation}
	See more discussions on BV functions and sets of finite perimeter in Section~\ref{Sec: Limit BV}.

	The last theorem is about a family of self-similar fractals constructed in as similar way as the standard Koch snowflake, which we call snowflakes of scales $\eta$. See more detailed discussions on Koch snowflakes in Section~\ref{Sec: Limit Fractal}, Figure~\ref{Fig: Snowflake}, Figure~\ref{Fig: Twistedflake}, and Figure~\ref{Fig: Constructflake}.
	\begin{theorem}\label{Thm: Main Twistedflake limit}
		\textit{
		Let $\Omega = \Omega(\eta) \subset \mb{R}^2 $ be the Koch snowflake of a scale $\eta >1$ with $\eta$ satisfying the condition (\ref{Eqn: Non-Lattice Condition}), then the limit
			\begin{equation}
				\lim_{t \to 0^+} {\frac{\mc{J}_t (\Omega)}{t^{4- \alpha(\eta)}}} 
			\end{equation}
		exists, where $\alpha(\eta) \in (1,2)$ is the Minkowski dimension of $\pa \Omega$ and satisfies (\ref{Eqn: Dimension of Twistedflake}).
		}
	\end{theorem}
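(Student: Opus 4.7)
The plan is to exploit the self-similarity of the Koch-type snowflake through a renewal equation, whose non-lattice hypothesis is exactly the condition (\ref{Eqn: Non-Lattice Condition}): without non-arithmeticity of the step distribution, the quantity $\mathcal{J}_t(\Omega)/t^{4-\alpha(\eta)}$ generically oscillates logarithmically-periodically, so the limit fails (as it does for the classical Koch snowflake). Thus the proof is fundamentally a renewal-theoretic one.

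First I would record the iterated-function-system description of $\partial\Omega(\eta)$. By the construction of the $\eta$-scale snowflake, there is a finite collection of similarities $S_1,\dots,S_N$ of $\mathbb{R}^2$, with contraction ratios $\lambda_1,\dots,\lambda_N \in (0,1)$, such that $\partial\Omega(\eta) = \bigcup_i S_i(\partial\Omega(\eta))$, with pieces meeting only at finitely many ``vertex'' points $V$. The Minkowski dimension $\alpha(\eta)$ satisfies the Moran equation $\sum_i \lambda_i^{\alpha(\eta)} = 1$, which is the content of (\ref{Eqn: Dimension of Twistedflake}).

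Next I would derive a self-similar functional equation for $\mathcal{J}_t(\Omega)$. Away from a small neighborhood of $V$, for each $i$ the set $\Omega$ agrees locally with $S_i(\Omega)$ on one side of $S_i(\partial\Omega)$, so substituting $x = S_i(x'), y = S_i(y')$ in the portion of the double integral whose base point $(x+y)/2$ lies close to $S_i(\partial\Omega)$ and far from $V$ yields a contribution equal to $\lambda_i^{2n}\,\mathcal{J}_{t/\lambda_i}(\Omega)$ plus truncation errors. Since Theorem~\ref{Thm: Main Upper Lower Bounds} guarantees that pairs $(x,y)$ straying a distance much greater than $t$ from $\partial\Omega$ contribute only $o(t^{2n-\alpha})$ (by the moment bound $\int J(z)|z|^n\,dz<\infty$), I would arrive at
\begin{equation*}
\mathcal{J}_t(\Omega) \;=\; \sum_{i=1}^N \lambda_i^{2n}\,\mathcal{J}_{t/\lambda_i}(\Omega) \;+\; e(t),
\end{equation*}
where $e(t)$ absorbs (i) vertex-neighborhood pairs, (ii) inter-piece pairs, and (iii) pairs far from $\partial\Omega$. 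Using the density lower bound underlying Theorem~\ref{Thm: Main Upper Lower Bounds} together with the $n$-th moment bound on $J$, and choosing a vertex cutoff of radius $r_t = t^{1-\epsilon}$ with $0<\epsilon<\alpha(\eta)/2$, I expect to show $e(t)=o(t^{2n-\alpha(\eta)})$. This error analysis is where the real work lies, and it is the main obstacle: the meeting points of the $S_i(\partial\Omega)$ are genuine cusps of the snowflake, and one must balance the cutoff scale against both the volume near the vertices and the $J$-tail mass.

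Finally, setting $f(t) := \mathcal{J}_t(\Omega)/t^{2n-\alpha(\eta)} = \mathcal{J}_t(\Omega)/t^{4-\alpha(\eta)}$ and using $\sum_i \lambda_i^{\alpha(\eta)}=1$, the equation becomes
\begin{equation*}
f(t) \;=\; \sum_{i=1}^N \lambda_i^{\alpha(\eta)}\, f(t/\lambda_i) \;+\; \tilde{e}(t), \qquad \tilde{e}(t)\to 0.
\end{equation*}
The change of variables $s=-\log t$, $F(s)=f(e^{-s})$ converts this to the standard renewal equation
\begin{equation*}
F(s) \;=\; \sum_{i=1}^N p_i\, F(s-\ell_i) \;+\; \tilde{e}(e^{-s}),
\end{equation*}
with $p_i := \lambda_i^{\alpha(\eta)}>0$, $\sum_i p_i =1$, and $\ell_i := -\log\lambda_i > 0$. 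Condition (\ref{Eqn: Non-Lattice Condition}) is the statement that $\{\ell_1,\dots,\ell_N\}$ is not contained in any discrete subgroup $c\mathbb{Z}$ of $\mathbb{R}$, i.e., the step distribution $\sum_i p_i\delta_{\ell_i}$ is non-arithmetic. Blackwell's renewal theorem then yields a finite limit $\lim_{s\to\infty}F(s)$, which by Theorem~\ref{Thm: Main Upper Lower Bounds} is strictly positive. This is the asserted limit.
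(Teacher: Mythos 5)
Your proposal follows essentially the same route as the paper: decompose by the self-similar pieces of $\pa\Omega(\eta)$, derive a functional equation $F(t)=\sum_i \lambda_i^{2n}F(t/\lambda_i)+R(t)$, pass to $G(s)=e^{(2n-\alpha)s}F(e^{-s})$, and apply the (non-lattice) renewal theorem, with condition (\ref{Eqn: Non-Lattice Condition}) playing exactly the non-arithmeticity role you identify. The error analysis you flag as the main obstacle is handled more simply in the paper: the distinct sub-pieces either are positively separated or touch at a single point and sit inside disjoint cones, so the corresponding $\Gamma_t$ has measure $O(t^n)$ and Lemma~\ref{Lem: General Upper Bound} with the $n$-th moment bound on $J$ gives $|R(t)|\leq C_J\cdot D\cdot t^{2n}=o(t^{2n-\alpha(\eta)})$ directly, with no need for a $t^{1-\epsilon}$ vertex cutoff.
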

	Combining Theorem~\ref{Thm: Main Twistedflake limit} with Theorem~\ref{Thm: Main Upper Lower Bounds}, the limit in Theorem~\ref{Thm: Main Twistedflake limit} is also comparable to the Minkowski contents of $\pa \Omega$.
	We also remark that this Theorem~\ref{Thm: Main Twistedflake limit} is an illustration on how to establish the existence of such a limit, which can be extended to other domains with self-similar fractal boundaries.
	Moreover, the algebraic condition (\ref{Eqn: Non-Lattice Condition}) is satisfied by almost all $\eta>1$ and hence $\alpha(\eta)$ can potentially take on almost any value in $(1,2)$ in Theorem~\ref{Thm: Main Twistedflake limit}.
    And for other constructions of self-similar fractal boundaries, one can obtain similar algebraic conditions like (\ref{Eqn: Non-Lattice Condition}). The union of all those possible $\alpha(\eta)$ will be the whole $(1,2)$.
	See more discussions in Section~\ref{Sec: Limit Fractal}.
	
	Finally, we note that in the functional $\mc{J}_t (\Omega)$, we considered a double integral over a domain $\Omega$ and its complement $\Omega^c$. In Section~\ref{Sec: Bounds} and some parts of Section~\ref{Sec: All Limits}, we will present the lemmas and proofs for double integrals over two disjoint domains $\Omega_1$ and $\Omega_2$, with similar assumptions near $\pa \Omega_1 \cap \pa \Omega_2$. 
	This provides a more general statement of our results.

\subsection{Results on Fluctuations of Determinantal Point Processes}\label{Sec: Main Results in probability}
	\indent

	This section contains the applications of results in Section~\ref{Sec: Main Results} on determinantal point processes, which was first introduced by Macchi~~\cite{M75} and originally called the fermion point process.
	Let $X$ be a determinantal point process on $\mb{R}^n$, and let $A_1 , A_2, \ldots, A_p$ be disjoint measurable sets in $\mb{R}^n$, then
		\begin{equation}\label{Eqn: Expectation Det Pt Process}
			\mc{E} \big[ X(A_1)X(A_2) \cdots X(A_p) \big] = \int_{A_1 \times A_2 \cdots \times A_p } \det {\big( K(x_i,x_j) \big)}_{1 \leq i,j \leq p} \ d\mu(x_1) \cdots d\mu(x_p).
		\end{equation}
	Here, $\mc{E}(\cdot)$ is the expectation, $\mu$ is a reference probability measure on $\mb{R}^n$, $K$ is a measurable function, and for many physical models, the matrix ${\big( K(x_i,x_j) \big)}_{1 \leq i,j \leq p}$ is Hermitian for any $p \in \mb{Z}_+$.
	Such a determinantal point process $X$ is called a determinantal point process with kernel $K$ with respect to $\mu$.

	Typical examples for determinantal point processes on $\mb{R}^2 = \mb{C}$ can come from random matrix theory, where we usually consider $X_N = \sum_{i=1} ^N \delta_{x_i}$ with $x_i$'s being eigenvalues of $N \times N$ random matrices. For example, the Gaussian unitary ensemble (GUE) and the complex Ginibre ensemble~~\cite{AGZ10}.

	On the other hand, given a function $K$ and a reference measure $\mu$ on $\mb{R}^n$, if $K$ defines a self-adjoint integral operator from $L^2(\mb{R}^n, d \mu)$ to $L^2(\mb{R}^n, d \mu)$, and is locally trace class with all eigenvalues in $[0,1]$, then there exists a determinantal point process, uniquely in law, with such a $K$ satisfying (\ref{Eqn: Expectation Det Pt Process}), see Corollary 4.2.21 of~~\cite{AGZ10}.
	In~~\cite{RV07}, readers can see three models following this logic.
	For determinantal point processes on higher dimensional $\mb{R}^n$, readers can see~~\cite{TSZ08}, where the authors called them Fermi-shells point processes.
	Soshnikov~~\cite{S00} discussed many models of determinantal point processes from random matrix theory, statistical mechanics, and quantum mechanics.

	For most of determinantal point processes, including those aforementioned models, the variance has a more concise formula:
	by (\ref{Eqn: Expectation Det Pt Process}),  for any appropriate test function $\varphi$, it has the form
		\begin{equation}\label{Eqn: Typical Variance}
			\mr{Var}(X(\varphi)) = \frac{1}{2}\int_{\mb{R}^n \times \mb{R}^n} { \big|\varphi(x) - \varphi(y) \big| }^2 \cdot {|K(x,y)|}^2   \ d\mu(x)  d\mu(y).
		\end{equation}
	In particular, for some of the aforementioned models, such as the planar model in~~\cite{RV07}, and the Fermi-shells point processes in~~\cite{TSZ08}, it can be shown that $\frac{d\mu(x)}{dx} = \xi(x) \in L^1(\mb{R}^n , dx)$ is nonnegative, and ${|K(x,y)|}^2 \xi(x) \xi (y)$ is a function of $x-y$, i.e., has the form $J(x-y)$ for a nonnegative integrable function $J$.
	Here, $\frac{d\mu(x)}{dx} = \xi(x)$ is the Radon-Nikodym derivative of $\mu$ with respect to the Lebesgue measure $dx$ on $\mb{R}^n$.
	The functionals then become 
		\begin{equation}
			\int_{\mb{R}^n \times \mb{R}^n} { \big|\varphi(x) - \varphi(y) \big| }^2 \cdot J(x-y) \ dx dy ,
		\end{equation}
	for which we discussed the asymptotic orders of $\mc{J}_t(\Omega)$ for the case $\varphi = \mathds{1}_{\Omega}$ in Section~\ref{Sec: Main Results}.
	On the other hand, the asymptotic behaviors of the variances for a family of $(X_t, \varphi_t  = \mathds{1}_{\Omega_t})$ will imply some forms of central limit theorems.
	Roughly speaking, under some mild assumptions on those determinantal processes $X_t$, if one can further obtain that $\mr{Var}(X_t(\Omega_t)) \to \infty$ as the parameter $t$ tending to some number in $\mb{R}$ or $\infty$, then $[X_t(\Omega_t) - \mc{E}(X_t(\Omega_t))] / \sqrt{\mr{Var}(X_t(\Omega_t))}$ converges in law to the normal distribution.
	See~\cite{CL95}, Theorem 8 of~\cite{S00}, or Theorem 4.2.25 of~~\cite{AGZ10}.
	We then can obtain the following direct corollaries.

	\begin{corollary}\label{Cor: Main Probability Models 1}
		\textit{
		Let $X$ be a determinantal point process on $\mb{R}^n$ with kernel $K$ with respect to $\mu$, such that ${|K(x,y)|}^2 \xi (x) \xi (y) = J(x-y)$ is a nonnegative integrable function, where $\xi(x)$ is the derivative of $\mu$ with respect to $dx$ on $\mb{R}^n$.
		Then we have the following cases:
			\begin{itemize}
				\item [(i)] Assume that $\Omega$ is a set of finite perimeter in $\mb{R}^n$ and $J(x) = J(|x|)$ is radially symmetric. Then, there is a computable positive constant $T = T(n , ||(J(x)\cdot |x|) ||_{L^1(\mb{R}^n , dx)} )$, such that
						\begin{equation}
							\lim_{t \to 0^{+}} t^{n-1} \cdot \mr{Var}( X (\Omega / t))  = T \cdot \mc{H}^{n-1}(\pa^* \Omega) .
						\end{equation}
					    Here, $\Omega / t \equiv \{ x / t \in \mb{R}^n \ | \ x \in \Omega{\}}$.
				\item [(ii)] Assume that $\pa \Omega$ is of Minkowski dimension $\alpha \in [n-1,n]$ and satisfies (\ref{Eqn: Minkowski Contents Bounds}). We also assume that $J$ and $\Omega$ satisfy the assumptions we made for Theorem~\ref{Thm: Main Upper Lower Bounds}, then 
						\begin{equation}
							0<\liminf_{t \to 0^+} \big[t^{\alpha} \cdot \mr{Var}( X (\Omega / t)) \big] \leq \limsup_{t \to 0^+} \big[ t^{\alpha} \cdot \mr{Var}( X (\Omega / t)) \big] < + \infty. 
						\end{equation}
					     Moreover, $t^{\alpha} \cdot \mr{Var}( X (\Omega / t))$ is comparable to the Minkowski contents of $\pa \Omega$.
				\item [(iii)] When $n=2$, assume that $\Omega$ is the Koch snowflake of a scale $\eta >1$ with $\eta$ satisfying the condition (\ref{Eqn: Non-Lattice Condition}), then the limit 
						\begin{equation}
							\lim_{t \to 0^+} t^{\alpha(\eta)} \cdot \mr{Var}( X (\Omega / t))
						\end{equation}
					    exists, where $\alpha(\eta)$ is the Minkowski dimension of $\pa \Omega$. Moreover, the limit is comparable to the Minkowski contents of $\pa \Omega$.
			\end{itemize}
		In all these three cases, we then obtain that the normalization of $X(\Omega/t)$ converges in law to the normal distribution.
		}
	\end{corollary}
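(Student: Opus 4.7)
The plan is to reduce all three statements directly to the theorems of Section~\ref{Sec: Main Results} by a single scaling computation, and then to invoke the cited determinantal central limit theorem. First I would substitute the hypothesis ${|K(x,y)|}^2 \xi(x)\xi(y) = J(x-y)$ into the variance formula (\ref{Eqn: Typical Variance}) with the indicator test function $\varphi = \mathds{1}_{\Omega/t}$; since $\xi$ is the Radon--Nikodym derivative of $\mu$ with respect to Lebesgue measure, this gives
\[
\mr{Var}\bigl(X(\Omega/t)\bigr) = \int_{\Omega/t}\int_{(\Omega/t)^c} J(x-y)\,dx\,dy.
\]
The change of variables $x = u/t$, $y = v/t$ sends $\Omega/t$ back to $\Omega$ and turns $J(x-y)$ into $J((u-v)/t)$, producing a Jacobian factor $t^{-2n}$. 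The outcome is the clean identity $\mr{Var}(X(\Omega/t)) = t^{-2n}\,\mc{J}_t(\Omega)$, which reformulates the entire corollary as an asymptotic question about $\mc{J}_t(\Omega)$ as $t \to 0^+$.

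Each of the three parts then follows immediately from the corresponding theorem of Section~\ref{Sec: Main Results}. For (i), the identity gives $t^{n-1}\mr{Var}(X(\Omega/t)) = \mc{J}_t(\Omega)/t^{n+1}$, and Theorem~\ref{Thm: Main Finite Perimeter Limit} shows this converges to $2K(n)\|J(z)\cdot|z|\|_{L^1}\,\mc{H}^{n-1}(\pa^* \Omega)$, producing the explicit constant $T = 2K(n)\|J(z)\cdot|z|\|_{L^1}$. For (ii), the identity gives $t^{\alpha}\mr{Var}(X(\Omega/t)) = \mc{J}_t(\Omega)/t^{2n-\alpha}$, and the two-sided bounds together with the comparability to the Minkowski contents of $\pa \Omega$ follow verbatim from Theorem~\ref{Thm: Main Upper Lower Bounds}. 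For (iii), the identity gives $t^{\alpha(\eta)}\mr{Var}(X(\Omega/t)) = \mc{J}_t(\Omega)/t^{4-\alpha(\eta)}$, whose limit exists by Theorem~\ref{Thm: Main Twistedflake limit}; its comparability with the Minkowski contents is then read off by combining Theorem~\ref{Thm: Main Twistedflake limit} with Theorem~\ref{Thm: Main Upper Lower Bounds}, exactly as noted in the text.

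The final step is to deduce convergence in law to the normal distribution. Here I would verify the sole hypothesis of the determinantal CLT, namely that $\mr{Var}(X(\Omega/t)) \to \infty$ as $t \to 0^+$. This is immediate from the asymptotics just computed: the variance grows like $t^{-(n-1)}$ in case (i), like $t^{-\alpha}$ in case (ii), and like $t^{-\alpha(\eta)}$ in case (iii), and in every situation considered these exponents are strictly positive. With variance divergence established, the normalization $\bigl(X(\Omega/t) - \mc{E}[X(\Omega/t)]\bigr)/\sqrt{\mr{Var}(X(\Omega/t))}$ converges in law to $\mc{N}(0,1)$ by the Costin--Lebowitz--Soshnikov theorem cited in the preceding discussion (Theorem~8 of~\cite{S00}, equivalently Theorem~4.2.25 of~\cite{AGZ10}). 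The hard part is really none of the three parts individually, since each is a one-line application after the scaling identity; the only thing worth being careful about is the bookkeeping of the exponent $-2n$ and the consistency check that the limiting expression in (i) recovers the factor $2K(n)$ correctly.
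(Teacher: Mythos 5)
Your proof is correct and is essentially the paper's own argument: the paper treats this corollary as a direct consequence of the scaling identity $\mr{Var}(X(\Omega/t)) = t^{-2n}\mc{J}_t(\Omega)$ combined with Theorems~\ref{Thm: Main Finite Perimeter Limit}, \ref{Thm: Main Upper Lower Bounds}, \ref{Thm: Main Twistedflake limit} and the Costin--Lebowitz--Soshnikov CLT, exactly as you lay out. The only nuance worth a passing thought is that the variance divergence needed for the CLT requires the relevant exponent ($n-1$ or $\alpha$) to be strictly positive, which degenerates only in the $n=1$ edge case that the paper itself does not address.
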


	Corollary~\ref{Cor: Main Probability Models 1} is a direct result according to those theorems in Section~\ref{Sec: Main Results} and the aforementioned central limit theorems results in~\cite{CL95,S00,AGZ10}.
	Also, part (iii) of Corollary~\ref{Cor: Main Probability Models 1} also holds true for some other domains $\Omega$ with self-similar fractal boundaries $\pa \Omega$ in $\mb{R}^n$. See the discussions in Section~\ref{Sec: Limit Fractal} and the proofs of our Theorem~\ref{Thm: Snowflake Limit} Theorem~\ref{Thm: Twistedflake limit}.

        In Corollary~\ref{Cor: Main Probability Models 1}, the condition that ${|K(x,y)|}^2 \xi (x) \xi (y)$ has a form of $J(x-y)$ is satisfied in the planar model in~~\cite{RV07} and the Fermi-shells point processes in~~\cite{TSZ08}. And in both cases, $J(x-y) = J(|x-y|)$ is radially symmetric. On the other hand, if $X_N$ is the determinantal point process associated with the complex Ginibre emsemble on $\mb{C} = \mb{R}^2$, the corresponding ${|K_N(x,y)|}^2 \xi_N (x) \xi_N (y)$ is not a function of $x-y$ anymore. Instead, as $N \to + \infty$, ${|K_N(x,y)|}^2 \xi_N (x) \xi_N (y)$ will tend to the planar model in~~\cite{RV07} when $|x|<1, |y|<1$. See more detailed discussions in Section~\ref{Sec: Det Process}.

        \begin{corollary}\label{Cor: Main Probability Models 2}
            \textit{
            Let $X_N$ be the determinantal point process associated the complex Ginibre ensemble and let $\Omega \subset \mb{C} = \mb{R}^2$ be a domain whose closure $\overline{\Omega}$ is contained in the unit ball $B(0,1) \equiv \{x \in \mb{R}^2 \ | \ |x| <1{\}}$. Then we have the following cases:
                \begin{itemize}
                    \item [(i)] Assume that $\Omega$ is a set of finite perimeter, then there is a computable positive constant $T$ such that
                                \begin{equation}
                    				\lim_{N \to \infty} \frac{ \mr{Var}(X_N(\Omega))}{\sqrt{N}} = T \cdot \mc{H}^{1}(\pa^* \Omega).
        			        \end{equation}
                    \item [(ii)] Assume that $\Omega$ satisfies the assumptions we made for Theorem~\ref{Thm: Main Upper Lower Bounds}  and $\pa \Omega$ is of Minkowski dimension $\alpha \in [1,2]$ and satisfies (\ref{Eqn: Minkowski Contents Bounds}), then 
						\begin{equation}
            				 0 < \liminf_{N \to + \infty} \frac{ \mr{Var}(X_N(\Omega))}{{N}^{(\alpha /2)}} =
                        \limsup_{N \to + \infty} \frac{ \mr{Var}(X_N(\Omega))}{{N}^{(\alpha /2)}} < + \infty .
            			\end{equation}
					     Moreover, $\frac{ \mr{Var}(X_N(\Omega))}{{N}^{(\alpha /2)}} $ is comparable to the Minkowski contents of $\pa \Omega$.
				\item [(iii)] If $\Omega$ is the Koch snowflake of a scale $\eta >1$ with $\eta$ satisfying the condition (\ref{Eqn: Non-Lattice Condition}), then the limit
			\begin{equation}
				\lim_{i \to \infty} \frac{ \mr{Var}(X_N(\Omega))}{{N}^{(\alpha(\eta) /2)}}
			\end{equation}
		  exists, where $\alpha(\eta)$ is the Minkowski dimension of $\pa \Omega$. Moreover, the limit is comparable to the Minkowski contents of $\pa \Omega$.
                \end{itemize}
            In all these three cases, we then obtain that the normalization of $X_N(\Omega)$ converges in law to the normal distribution.
            }
        \end{corollary}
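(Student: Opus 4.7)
The plan is to reduce each of the three parts to the corresponding theorem in Section~\ref{Sec: Main Results} applied to the translation-invariant kernel $J(z) = \pi^{-2} e^{-|z|^2}$, via explicit uniform asymptotics for the Ginibre correlation kernel. Recall that
\begin{equation*}
	K_N(x,y) = \frac{N}{\pi}\, e^{-N(|x|^2+|y|^2)/2} \sum_{k=0}^{N-1} \frac{(Nx\bar{y})^k}{k!},
\end{equation*}
so $\mr{Var}(X_N(\Omega)) = \int_\Omega \int_{\Omega^c} |K_N(x,y)|^2 \, dx\, dy$.

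The key technical input is a uniform bulk asymptotic. Fix $\rho'$ with $\overline{\Omega} \subset B(0,\rho) \subset B(0,\rho') \subset B(0,1)$. I would prove
\begin{equation*}
	|K_N(x,y)|^2 = \frac{N^2}{\pi^2} e^{-N|x-y|^2} + O(N^2 e^{-c_{\rho'} N})
\end{equation*}
uniformly for $x,y \in B(0, \rho')$, by comparing $\sum_{k=0}^{N-1} w^k/k!$ with $e^w$ at $w = N x\bar y$ (so $|w| \le N{\rho'}^2 < N$) using a Poisson large-deviation bound of the form $P(\mr{Poi}(N{\rho'}^2) \ge N) \le e^{-c_{\rho'} N}$. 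Combined with the classical Ginibre outside-the-disk estimate showing that the contribution of pairs $(x,y)$ with $y \notin B(0, \rho')$ is exponentially small in $N$, I would obtain
\begin{equation*}
	\mr{Var}(X_N(\Omega)) = \int_\Omega \int_{\Omega^c} \frac{N^2}{\pi^2} e^{-N|x-y|^2} \, dx \, dy + O(e^{-c_{\rho'} N}).
\end{equation*}

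Setting $t = 1/\sqrt{N}$ and $J(z) = \pi^{-2} e^{-|z|^2}$ (which is radially symmetric, in $L^1(\mb{R}^2)$, has all polynomial moments, and is bounded below on every ball around the origin, so the hypotheses of Theorem~\ref{Thm: Main Upper Lower Bounds}, Theorem~\ref{Thm: Main Finite Perimeter Limit}, and Theorem~\ref{Thm: Main Twistedflake limit} all hold), the main term equals $N^2 \mc{J}_t(\Omega) = t^{-4} \mc{J}_t(\Omega)$. Each part of the corollary then reduces to dividing the corresponding theorem of Section~\ref{Sec: Main Results} by the appropriate power of $t$: Theorem~\ref{Thm: Main Finite Perimeter Limit} applied to a finite-perimeter $\Omega$ gives (i) with an explicit $T = 2 K(2)\,\|J(z)\cdot|z|\|_{L^1(\mb{R}^2)}$; Theorem~\ref{Thm: Main Upper Lower Bounds} yields the two-sided bounds in (ii), comparable to the Minkowski contents of $\pa\Omega$; Theorem~\ref{Thm: Main Twistedflake limit} produces the limit in (iii). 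Since the variance tends to infinity in each case, the central limit theorems of Costin--Lebowitz and Soshnikov~\cite{CL95, S00} cited earlier then give the claimed convergence in law to the normal distribution.

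The main obstacle is the uniform bulk asymptotic above. One has to control $|K_N(x,y)|^2$ uniformly on the compact product $\overline{B(0,\rho')}\times \overline{B(0,\rho')}$ with exponential-in-$N$ error, including for pairs $(x,y)$ with $|x-y|$ small where both the leading Gaussian term and $|K_N(x,y)|^2$ itself are of size $\sim N^2$. The termwise triangle inequality inside the partial sum combined with the Poisson tail bound provides the needed remainder estimate, and the resulting error is negligible after division by $N^{\alpha/2}$ for every $\alpha \in [1,2]$ because exponential decay beats any polynomial. Crucially, no subtlety depending on the regularity of $\pa\Omega$ enters this step: the boundary roughness influences only the polynomial-order main term, which is exactly what the theorems of Section~\ref{Sec: Main Results} already handle.
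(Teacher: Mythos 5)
Your proposal is correct and follows essentially the same route as the paper: the paper's Lemma~\ref{Lem: Exponential Remainder} is exactly your truncated-exponential (Poisson-tail) comparison of $\sum_{k<N}(Nx\bar y)^k/k!$ with $e^{Nx\bar y}$, the exponentially small off-diagonal/outside-$B(0,\lambda_+)$ contribution is handled by the same $e^{-N(|x|-|y|)^2}$ bound, and the reduction to the Section~\ref{Sec: Main Results} theorems via $t=1/\sqrt{N}$ and the Gaussian kernel, followed by the Costin--Lebowitz/Soshnikov CLT, matches (\ref{Eqn: Asymptotic Ginibre Variance}) and Corollaries~\ref{Cor: Ginibre Finite Perimeter 1}--\ref{Cor: Ginibre Fractal}.
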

	
	The proof of Corollary~\ref{Cor: Main Probability Models 2} will be shown in Section~\ref{Sec: Det Process}. In Section~\ref{Sec: Det Process}, we will also consider the limit of $\mr{Var}(X_N(\varphi))$ as $N \to +\infty$ when $\varphi \in W_0 ^{1,2}(B(0,\lambda))$ with $\lambda \in (0,1)$, which is a partial generalization of the results in~\cite{RV07-2}. It would also be intersting to build up similar central limit theorems for $\varphi \in W_0 ^{1,2}(\mb{R}^2)$, but we do not discuss further along this direction in this manuscript for compactness.

\begin{remark}\label{Rmk: J Change Signs}
	There are other point processes where our methods can also apply, but their variances may not have a nonnegative two-point function $J$ in the functional
		\begin{equation}
			\mc{J}_t(\Omega)  = \int_{\Omega} \int_{\Omega^c}   J \bigg(\frac{x-y}{t}\bigg) \ dx dy.
		\end{equation}
	For example, the zeros of the Gaussian entire function~~\cite{BS17} become a point process. The function is defined by $f(z) = \sum_{p=0}^{\infty} a_p e_p(z), z \in \mb{C}$,
	$a_p$'s are i.i.d.standard complex Gaussian random variables,  $\{e_p = z^p / \sqrt{p!}{\}}$ and it is an orthonormal basis of $L^2(\mb{C}, (e^{-{|z|}^2} / \pi) \ d\area(z))$, where $ d\area(z)$ is the Lebesgue measure when we regard $\mb{C}$ as $\mb{R}^2$.
	In this example, 
		\begin{equation}
			J(z) = J(|z|) = \frac{1}{2} \frac{d^2}{dr^2} \bigg|_{r = |z|}  r^2   \big[\coth{(r)} - 1 \big].
		\end{equation}
	See, for example,~\cite{BS17, FH99, SWY221}.
	This function $J$ will change its sign so the method proving Theorem~\ref{Thm: Main Upper Lower Bounds} does not apply to this case. However, we can separate the positive and negative part of this $J$ and apply Theorem~\ref{Thm: Main Finite Perimeter Limit} to those $\Omega \subset \mb{R}^2$ with finite perimeter.
	The limit $\lim_{t \to 0^+} \mc{J}_t(\Omega) / t^{3}$ will then depend on $\int_{\mb{R}^2} J(z) \cdot |z| \ d\area(z)$, which is still positive for the $J$ in this example. The method proving Theorem~\ref{Thm: Main Twistedflake limit} also applies to this example when $\Omega$ is a domain with a fractal boundary, while we do not know whether the limit is nonzero.

	Apart from this Gaussian entire function example, it would also be interesting to consider those functions $J$ such that the integrals $||J(z) \cdot |z|  ||_{L^1(\mb{R}^n)}$ vanish. 
	In~\cite{SWY222}, the authors also considered another similar nonlocal integral functional with a probability background, where they obtained results similar to Theorem~\ref{Thm: Main Finite Perimeter Limit} in the dimension $n=2$ case.
	It would also be intersting to see more applications of the nonlocal functional in~\cite{SWY222} in other fields like probability.
	But these are out of the scope of this manuscript and we leave those questions for further works.
\end{remark}

\section{Upper and Lower Limits in Theorem~\ref{Thm: Main Upper Lower Bounds}}\label{Sec: Bounds}

	Let $\Omega_1, \Omega_2 \subset \mb{R}^n$ be two  Lebesgue measurable sets and $\Omega_1 \cap \Omega_2 = \emptyset$. 
	We define
		\begin{equation}
			\Gamma_t \equiv \overline{B(\Omega_2 , t)} \cap \Omega_1 = \{ x \in \Omega_1 \ | \ \dist(x, \Omega_2) \leq t  {\}}. 
		\end{equation}
	Because $\Omega_1 \cap \Omega_2 = \emptyset$, $\Gamma_t$ also equals to $\overline{B( \pa \Omega_2 , t)} \cap \Omega_1$.
	We assume that $|\Omega_1|$ is bounded, where for any measurable set $A \subset \mb{R}^n$, we let $|A|$ be its Lebesgue measure.
	We assume assume that there is a $C_1>0$ and an $\alpha \in [0,n]$, such that $\big| \Gamma_t \big| \leq C_1 \cdot \min\{t^{n-\alpha} , 1{\}}$ for all $t>0$.
	Intuitively, this assumption is a little bit weaker than assuming that $\pa \Omega_1 \cap \pa \Omega_2$ is of Minkowski dimension $\alpha$.

	We assume that $J(z)$ for $z \in \mb{R}^n$ is a nonnegative function, such that 
		\begin{equation}
			\int_{\mb{R}^n} J(z) \cdot |z|^{n-\alpha} \ dz \leq C_J < \infty,
		\end{equation}
	for a positive constant $ C_J$. For simplicity, readers can keep natural examples like $J(z) \sim e^{-|z|}$ or $J(z) \sim e^{-{|z|}^2}$ in mind in the following.

	\begin{lemma}\label{Lem: General Upper Bound}
		\textit{
		For all $t>0$,
			\begin{equation}\label{Eqn: Upper Error}
				\int_{\Omega_1 \times \Omega_2} J \bigg( \frac{x-y}{t} \bigg) \ dxdy \leq C_1 \cdot  C_J \cdot t^{2n - \alpha}.
			\end{equation}
		}
	\end{lemma}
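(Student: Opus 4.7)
The plan is to perform a change of variables that decouples $t$ from the geometry, then use Fubini to exchange the order of integration so the domain-of-integration measure can be controlled by the hypothesis on $|\Gamma_s|$.

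First I would set $z = (x-y)/t$, i.e., $y = x - tz$, with Jacobian $t^n$, so that
\begin{equation*}
\int_{\Omega_1 \times \Omega_2} J\!\left(\frac{x-y}{t}\right) dx\, dy = t^n \int_{\Omega_1} \int_{\{z \in \mathbb{R}^n \,:\, x - tz \in \Omega_2\}} J(z)\, dz\, dx.
\end{equation*}
By Tonelli's theorem, this equals
\begin{equation*}
t^n \int_{\mathbb{R}^n} J(z) \cdot \bigl| \{x \in \Omega_1 \,:\, x - tz \in \Omega_2\} \bigr|\, dz.
\end{equation*}
Next, I would observe that if $x \in \Omega_1$ and $x - tz \in \Omega_2$, then $\operatorname{dist}(x, \Omega_2) \leq |tz| = t|z|$, so $x \in \Gamma_{t|z|}$. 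Hence the inner set is a subset of $\Gamma_{t|z|}$, and the hypothesis gives $|\Gamma_{t|z|}| \leq C_1 \min\{(t|z|)^{n-\alpha}, 1\}$.

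The remaining step is to check that this minimum may be replaced by just $(t|z|)^{n-\alpha}$. Since $\alpha \in [0,n]$, the exponent $n-\alpha \geq 0$, so whenever $t|z| \geq 1$ we have $(t|z|)^{n-\alpha} \geq 1$, and whenever $t|z| \leq 1$ the minimum is already $(t|z|)^{n-\alpha}$. Thus $\min\{(t|z|)^{n-\alpha}, 1\} \leq (t|z|)^{n-\alpha}$ in every case. Substituting back,
\begin{equation*}
\int_{\Omega_1 \times \Omega_2} J\!\left(\frac{x-y}{t}\right) dx\, dy \leq t^n \cdot C_1 \cdot t^{n-\alpha} \int_{\mathbb{R}^n} J(z)\, |z|^{n-\alpha}\, dz \leq C_1 \cdot C_J \cdot t^{2n-\alpha},
\end{equation*}
which is precisely the claimed bound.

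There is no real obstacle here; the only mildly delicate point is recognizing that both the regime $t|z| \leq 1$ (small displacements, where $x$ must be close to $\Omega_2$) and the regime $t|z| \geq 1$ (large displacements, where the volume bound saturates at $C_1$) are uniformly handled by the single factor $(t|z|)^{n-\alpha}$, owing to $\alpha \leq n$. The moment hypothesis on $J$ is tailored exactly to absorb this factor.
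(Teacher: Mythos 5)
Your proof is correct and follows essentially the same route as the paper's: both reduce the double integral, via Fubini, to $\int_{\mathbb{R}^n} J \cdot |\Gamma_{\cdot}|$, bound the sliced measure by the hypothesis on $|\Gamma_s|$, and absorb the factor $|z|^{n-\alpha}$ using the moment condition on $J$. The only cosmetic difference is that you use a direct affine change of variables plus Tonelli where the paper slices over spheres $\partial B(x,r)$ with the coarea formula; the estimates are identical.
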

	\begin{proof}
		Notice that
			\begin{equation}
				\begin{split}
					& \quad \int_{\Omega_1 \times \Omega_2} J \bigg( \frac{x-y}{t} \bigg) \ dxdy 
				\\	&= \int_{\Omega_1} \int_{0} ^{\infty} \int_{\pa B(x,r) \cap \Omega_2}  J \bigg( \frac{x-y}{t} \bigg) \ d\mc{H}^{n-1}(y) dr dx.
				\end{split}
			\end{equation}
		As a function of $(x,r)$ on $\Omega_1 \times (0,+\infty)$,
			\begin{equation}
				\int_{\pa B(x,r) \cap \Omega_2}  J \bigg( \frac{x-y}{t} \bigg) \ d\mc{H}^{n-1}(y) \leq \mathds{1}_{\Gamma_r}(x) \cdot \int_{\pa B(x,r) }  J \bigg( \frac{x-y}{t} \bigg) \ d\mc{H}^{n-1}(y).
			\end{equation}
		Hence,
			\begin{equation}
				\begin{split}
					& \quad \int_{\Omega_1} \int_{0} ^{\infty} \int_{\pa B(x,r) \cap \Omega_2}  J \bigg( \frac{x-y}{t} \bigg) \ d\mc{H}^{n-1}(y) dr dx
				\\	& \leq \int_{\Omega_1} \int_{0} ^{\infty} \int_{\pa B(x,r)} \mathds{1}_{\Gamma_r}(x) J \bigg( \frac{x-y}{t} \bigg) \ d\mc{H}^{n-1}(y) dr dx
				\\	& = \int_{\Omega_1} \int_{\mb{R}^n} \mathds{1}_{\Gamma_{|z|}}(x) J \bigg( \frac{-z}{t} \bigg) \ dzdx
					= \int_{\mb{R}^n} \big| \Gamma_{|z|}\big| J \bigg( \frac{z}{t} \bigg) \ dz
				\\	& \leq C_1 \cdot \int_{\mb{R}^n} \min \big\{{|z|}^{n-\alpha} , 1 \big{\}} J \bigg( \frac{z}{t} \bigg) \ dz \leq C_1 \cdot \int_{\mb{R}^n} {|z|}^{n-\alpha}J \bigg( \frac{z}{t} \bigg) \ dz 
				\\	&\leq C_1 \cdot  C_J \cdot t^{2n - \alpha}.
				\end{split}
			\end{equation}

	\end{proof}

	Lemma~\ref{Lem: General Upper Bound}  provided a general approach for obtaining upper bounds. In the following, we will discuss potential lower bounds, subject to some additional natural assumptions for technical reasons.
	We first assume that $J(z) > c_J >0$ when $|z| \leq a_J$ for some positive constants $c_J, a_J$.
	We also assume that there is a part $E \subset \pa \Omega_1 \cap \pa \Omega_2$, and there is a $D_E  >0$, such that for all $z \in E$ and $t \in (0,1)$, $|B(z,t) \cap \Omega_2| \geq D_E \cdot t^n$.
	We may emphasize that this is the noncollapsing density at $z$ on the $\Omega_2$ side, and this is a natural assumption for NTA domains~~\cite{JK82, T17}.
	We also define $N_t(E) \equiv B(E , t) \cap \Omega_1  = \{ x \in \Omega_1 \ | \ \dist(x,E) < t{\}}$.

	\begin{lemma}\label{Lem: General Lower Bound}
		\textit{
		Suppose that there is a $C_2>0$ such that
			\begin{equation}
				|N_t(E)| \geq C_2 \cdot t^{n-\alpha},
			\end{equation}
		for all $t\in (0,1)$.
		Then there is a constant $c = c(n)>0$, such that for all $t \in (0,1)$,
			\begin{equation}
				\int_{\Omega_1} \int_{\Omega_2} J \bigg( \frac{x-y}{t} \bigg) \ dxdy \geq c(n) \cdot C_2 c_J D_E \cdot {({a_J t })}^{2n-\alpha}.
			\end{equation}
		}
	\end{lemma}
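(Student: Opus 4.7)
The plan is to exhibit a region inside $\Omega_1 \times \Omega_2$ on which the integrand $J((x-y)/t)$ is uniformly bounded below by $c_J$, and then to lower-bound its product measure using the two standing hypotheses. Since $J \geq c_J$ on $\{|z|\leq a_J\}$, the relevant region is the set of pairs $(x,y) \in \Omega_1 \times \Omega_2$ with $|x-y|\leq a_J t$, so the task reduces to quantifying how large this set is via the collar thickness of $N_r(E)$ on the $\Omega_1$ side and the noncollapsing density of $\Omega_2$ at points of $E$.

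First I would fix a scale $r := \tfrac{1}{2}\min(a_J,1)\cdot t$, chosen so that $r \in (0,1/2)$ for every $t \in (0,1)$ (so the density hypothesis at $z \in E$ is applicable at radius $r$) while still $2r \leq a_J t$. For an arbitrary $x \in N_r(E)$, pick $z = z(x) \in E$ with $|x-z| < r$; the triangle inequality then gives $B(z,r) \subset B(x,2r) \subset B(x,a_J t)$, so every $y \in B(z,r) \cap \Omega_2$ satisfies $|x-y| < a_J t$ and hence $J((x-y)/t) \geq c_J$. Combining with $|B(z,r) \cap \Omega_2| \geq D_E r^n$ produces a pointwise lower bound on the inner integral,
\begin{equation*}
\int_{\Omega_2} J\bigg(\frac{x-y}{t}\bigg)\, dy \;\geq\; c_J \cdot |B(z,r) \cap \Omega_2| \;\geq\; c_J D_E\, r^n,
\end{equation*}
valid for every $x \in N_r(E)$.

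Next I would integrate this pointwise bound over $x \in N_r(E) \subset \Omega_1$ and invoke the collar hypothesis $|N_r(E)| \geq C_2 r^{n-\alpha}$, yielding
\begin{equation*}
\int_{\Omega_1}\int_{\Omega_2} J\bigg(\frac{x-y}{t}\bigg)\, dy\, dx \;\geq\; c_J D_E\, r^n \cdot C_2 r^{n-\alpha} \;=\; C_2 c_J D_E\, r^{2n-\alpha}.
\end{equation*}
Unpacking $r = \tfrac{1}{2}\min(a_J,1)\, t$ and using $2n - \alpha \in [n,2n]$ then delivers the desired bound of the form $c(n) \cdot C_2 c_J D_E \cdot (a_J t)^{2n-\alpha}$, with $c(n) = 2^{-2n}$ when $a_J \leq 1$; when $a_J > 1$ an extra harmless factor $a_J^{-(2n-\alpha)}$ appears, which can either be absorbed into an $a_J$-dependent constant or circumvented by restricting to the only relevant regime where $a_J t$ is small.

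I do not anticipate any serious obstacle. The argument is a transparent combination of the triangle inequality with the two volume estimates supplied by the hypotheses; the sole point requiring care is keeping the working radius below the threshold $1$ at which the density condition on $E$ is stated, which is precisely the role of the truncation by $1$ in the definition of $r$.
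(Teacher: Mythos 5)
Your proposal is correct and follows essentially the same route as the paper's proof: work on the collar $N_{r}(E)$ with $r\sim a_J t/2$, use the density hypothesis at a nearby boundary point $z\in E$ to lower-bound the inner integral by $c_J D_E r^n$, then integrate over the collar using $|N_r(E)|\geq C_2 r^{n-\alpha}$. Your extra truncation $r=\tfrac12\min(a_J,1)t$ is a sensible (and slightly more careful) safeguard ensuring the radius stays below the threshold $1$ at which the hypotheses are stated, a point the paper's proof passes over silently.
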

	\begin{proof}
		The proof is direct. For any $x \in N_{(a_J /2) \cdot t}(E)$, there is a $z \in E \cap B(x, (a_J/2) \cdot t) $. And we see that $B(z,(a_J/2) \cdot t) \cap \Omega_2 \subset B(x, a_J \cdot t)$, and $|B(z,(a_J/2) \cdot t) \cap \Omega_2| \geq D_E \cdot {(( a_J/2)\cdot t )}^n$ by our assumptions.
		For any $y \in B(z,(a_J/2) \cdot t) \cap \Omega_2$, we also know that $J \big( (x-y)/t \big) > c_J$.
		Hence,
			\begin{equation}
				\begin{split}
					& \quad \int_{\Omega_1} \int_{\Omega_2} J \bigg( \frac{x-y}{t} \bigg) \ dxdy
					 \geq \int_{N_{(a_J /2) \cdot t}(E) } \int_{B(z, (a_J/2) \cdot t) \cap \Omega_2}  J \bigg( \frac{x-y}{t} \bigg) \ dydx
				\\	& \geq \int_{N_{(a_J /2) \cdot t}(E) } \int_{B(z, (a_J/2) \cdot t) \cap \Omega_2}  c_J \ dydx
					 \geq c_J D_E {\bigg(\frac{a_J t}{2} \bigg)}^n \cdot \big | N_{(a_J /2) \cdot t}(E)  \big |
				\\	& \geq C_2c_J D_E {\bigg(\frac{a_J t}{2} \bigg)}^{2n-\alpha}.
				\end{split}
			\end{equation}
	\end{proof}

	We assume that $\Omega \subset \mb{R}^n$ is a bounded measurable set with $|\Omega| >0$, and with topological boundary $\pa \Omega$ such that 
	\begin{equation}
		0 < M_2 \leq \underline{\mc{M}}^{\alpha}(\pa \Omega) \leq \overline{\mc{M}}^{\alpha}(\pa \Omega) \leq M_1 < \infty, 
	\end{equation}
	where $M_1, M_2$ are two positive constants and $\alpha \in [n-1,n]$. The following Corollary then directly implies Theorem~\ref{Thm: Main Upper Lower Bounds}.

\begin{corollary}\label{Cor: Main Upper and Lower Limits}
	\textit{
	Under the same assumptions on $J(z)$ in Lemma~\ref{Lem: General Lower Bound},
	we also assume that there is a part $E \subset \pa \Omega$, on which for all $z \in E$ and $t \in (0,1)$, $|B(z,t) \cap \Omega| \geq D_E \cdot t^n$ and $|B(z,t) \cap \Omega^c| \geq D_E \cdot t^n$ for a positive constant $D_E$.
	Furthermore, we assume that $ \underline{\mc{M}}^{\alpha}(E) >0$. Then,
		\begin{equation}
			0 < \liminf_{t \to 0^+} \frac{1}{t^{2n-\alpha}} \int_{\Omega} \int_{\Omega^c} J \bigg( \frac{x-y}{t} \bigg) \ dxdy \leq \limsup_{t \to 0^+} \frac{1}{t^{2n-\alpha}} \int_{\Omega} \int_{\Omega^c} J \bigg( \frac{x-y}{t} \bigg) \ dxdy  < \infty,
		\end{equation}
	where the lower and upper limits are comparable to the Minkowski contents of $\pa \Omega$.
	}
\end{corollary}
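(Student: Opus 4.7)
The plan is to specialize Lemma~\ref{Lem: General Upper Bound} and Lemma~\ref{Lem: General Lower Bound} with $(\Omega_1,\Omega_2) = (\Omega,\Omega^c)$ and the given set $E \subset \pa \Omega$. Both lemmas have already reduced the integral asymptotics to measure-theoretic estimates on $\Gamma_t$ and $N_t(E)$, so what remains is to deduce these estimates from the Minkowski content hypotheses on $\pa \Omega$ and $E$.

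For the upper bound, I would first observe that any $x \in \Omega$ with $\dist(x,\Omega^c) \leq t$ lies within distance $t$ of $\pa \Omega$, since a minimizing segment from such an $x$ to any approximating point in $\Omega^c$ must cross the topological boundary. Hence $\Gamma_t \subset \overline{B(\pa\Omega,t)}$. The hypothesis $\overline{\mc{M}}^{\alpha}(\pa \Omega) \leq M_1$ then yields $|\Gamma_t| \leq C \cdot M_1 \cdot t^{n-\alpha}$ for small $t$, while boundedness of $\Omega$ handles $t \geq 1$, giving an estimate of the form $|\Gamma_t| \leq C_1 \cdot \min\{t^{n-\alpha},1\}$ with $C_1$ controlled by $M_1$ and $|\Omega|$. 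Lemma~\ref{Lem: General Upper Bound} then delivers the $\limsup$ bound, proportional to $C_J \cdot M_1$.

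For the lower bound the nontrivial input is $|N_t(E)| \geq C_2 \cdot t^{n-\alpha}$. I would run a Vitali-type packing argument inside $E$: fix a maximal $2t$-separated subset $\{z_1,\dots,z_N\} \subset E$. By maximality $E \subset \bigcup_i B(z_i,2t)$, hence $B(E,t) \subset \bigcup_i B(z_i,3t)$, and the assumption $\underline{\mc{M}}^{\alpha}(E) > 0$ forces $|B(E,t)| \geq c \cdot t^{n-\alpha}$ for small $t$, so $N \geq c' \cdot t^{-\alpha}$. Since the balls $B(z_i,t)$ are pairwise disjoint, each center $z_i$ lies in $E \subset \pa \Omega$, and the noncollapsing density hypothesis gives $|B(z_i,t) \cap \Omega| \geq D_E \cdot t^n$. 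Summing,
\begin{equation*}
|N_t(E)| \;\geq\; \sum_{i=1}^N |B(z_i,t) \cap \Omega| \;\geq\; N \cdot D_E \cdot t^n \;\geq\; c' D_E \cdot t^{n-\alpha},
\end{equation*}
which feeds into Lemma~\ref{Lem: General Lower Bound} to produce the desired $\liminf$ lower bound, with explicit dependence on $n$, $D_E$, $c_J$, $a_J$, and $\underline{\mc{M}}^{\alpha}(E)$.

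The main obstacle, modest in substance, is arranging the covering so that the centers $z_i$ lie in $E$ (in order to apply the noncollapsing density hypothesis on the $\Omega$-side of each ball) while simultaneously ensuring that the dilated balls $B(z_i, 3t)$ cover $B(E,t)$ (in order to transfer the Minkowski content lower bound into a packing count). A maximal $2t$-separated subset of $E$ achieves both requirements, and the remaining work is merely book-keeping of constants so that the $\liminf$ and $\limsup$ of $t^{\alpha - 2n}\int_{\Omega}\int_{\Omega^c} J((x-y)/t)\,dx\,dy$ are both comparable to the Minkowski contents of $\pa \Omega$, yielding in particular the factors $O_2 \cdot M_2$ and $O_1 \cdot M_1$ claimed in Theorem~\ref{Thm: Main Upper Lower Bounds}.
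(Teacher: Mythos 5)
Your argument is correct, and the upper bound is handled exactly as in the paper: one checks $\Gamma_t \subset \overline{B(\pa\Omega,t)}$ and feeds the upper Minkowski content bound into Lemma~\ref{Lem: General Upper Bound}. For the lower bound, however, you take a genuinely different route. The paper keeps the weaker input $\underline{\mc{M}}^{\alpha}(E)>0$ as is and exploits the symmetry of the two-sided density hypothesis: since $|B(E,t)| = |B(E,t)\cap\Omega| + |B(E,t)\cap\Omega^c|$, at least one side carries half the mass $\gtrsim t^{n-\alpha}$, and the argument of Lemma~\ref{Lem: General Lower Bound} is then run with the roles of $\Omega$ and $\Omega^c$ chosen accordingly (the hypothesis $|B(z,t)\cap\Omega|\geq D_E t^n$ and $|B(z,t)\cap\Omega^c|\geq D_E t^n$ makes either choice admissible). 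You instead verify the literal hypothesis $|N_t(E)| = |B(E,t)\cap\Omega| \geq C_2 t^{n-\alpha}$ of Lemma~\ref{Lem: General Lower Bound} by a maximal $2t$-separated packing of $E$: the covering $B(E,t)\subset\bigcup_i B(z_i,3t)$ turns $\underline{\mc{M}}^{\alpha}(E)>0$ into a count $N\gtrsim t^{-\alpha}$, and the disjointness of the $B(z_i,t)$ together with $|B(z_i,t)\cap\Omega|\geq D_E t^n$ converts the count back into volume on the $\Omega$-side alone. Both arguments are sound; yours is more explicit and yields a one-sided neighborhood estimate that could be reused independently (e.g.\ for the two-disjoint-domain setting of Section~\ref{Sec: Bounds}), at the cost of an extra factor of $D_E$ and the covering constants in $C_2$, while the paper's symmetrization is shorter and ties the lower limit directly to $|B(\pa\Omega,t)|/t^{n-\alpha}$, which is how it reads off comparability with the Minkowski contents.
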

\begin{proof}
	The upper limit follows from Lemma~\ref{Lem: General Upper Bound} directly. For the lower limit, notice that our assumption $ \underline{\mc{M}}^{\alpha}(E) >0$ is slightly weaker than the one we made in Lemma~\ref{Lem: General Lower Bound}. 
	The proof is exactly the same as the proof for Lemma~\ref{Lem: General Lower Bound}, and one needs to combine both parts $B(E,t) \cap \Omega$ and $B(E,t) \cap \Omega^c$ to get a similar lower bound in Lemma~\ref{Lem: General Lower Bound}.
	We can actually get that
		\begin{equation}
			\frac{1}{t^{2n-\alpha}} \int_{\Omega} \int_{\Omega^c} J \bigg( \frac{x-y}{t} \bigg) \ dxdy \geq C \cdot \frac{|B(\pa \Omega,t)|}{t^{n- \alpha}},
		\end{equation}
	where the positive constant $C$ depends on some parameters on $J$ and $\pa \Omega$ that we mentioned in Lemma~\ref{Lem: General Lower Bound}.
\end{proof}
\begin{remark}
	Recall that for Lemma~\ref{Lem: General Lower Bound} and Corollary~\ref{Cor: Main Upper and Lower Limits}, we assume that $J(z) > c_J >0$ when $|z| \leq a_J$. The condition that $J(z)> c_J$ on $|z| \leq a_J$ can also be replaced by $J(z)$ bounded from below on an annulus, but then one needs to have volume lower bounds better than $|B(x,t) \cap \Omega| \geq D_E \cdot t^n$ and $|B(x,t) \cap \Omega^c| \geq D_E \cdot t^n$, i.e., one also needs to replace balls $B(x,t)$ by annuluses.
\end{remark}

When $J(z)$ has the form ${|z|}^{-\beta}$, the author of~~\cite{L19} introduced a fractal dimension and discussed its relation with the Minkowski dimension. This singular form of $J(z)$ comes from the definition of fractional Sobolev space, as discussed in Section~\ref{Sec: Introduction} and~~\cite{BBM01}.
We will revisit the results in~~\cite{BBM01} in Section\ref{Sec: All Limits}.


\section{Limits in Theorem~\ref{Thm: Main Finite Perimeter Limit} and Theorem~\ref{Thm: Main Twistedflake limit}}\label{Sec: All Limits}
\subsection{Sets of finite perimeter and general BV functions}\label{Sec: Limit BV}

	\indent

	Let us first recall the results from~~\cite{BBM01, D02}. We will make some revisions to ensure that the settings in~~\cite{BBM01,D02} are compatible with this manuscript. 
	Assume that $J(z)$ for $z \in \mb{R}^n$ is nonnegative and is radially symmetric, i.e., $J(z) = J(|z|)$.
	And we assume that $\int_{\mb{R}^n} J(z) \cdot |z| \ dz = \int_0 ^{\infty} J(r)\cdot r^n \ dr = 1$, i.e., $\alpha = n-1$ in Section~\ref{Sec: Bounds}.
	Recall that the functional $\mc{J}_t(\Omega)$ we considered in Section~\ref{Sec: Bounds} has the following more symmetric form. For a measurable set $\Omega \subset \mb{R}^n$,
		\begin{equation}
			\int_{\Omega} \int_{\Omega^c} J \bigg( \frac{x-y}{t} \bigg) \ dxdy + \int_{\Omega^c} \int_{\Omega} J \bigg( \frac{x-y}{t} \bigg) \ dxdy= \int_{\mb{R}^n} \int_{\mb{R}^n} \big| \mathds{1}_{\Omega} (x) - \mathds{1}_{\Omega} (y) \big| J \bigg( \frac{x-y}{t} \bigg) \ dxdy .
		\end{equation}
	We can consider a general function $f(x)$ instead of only $\mathds{1}_{\Omega} (x)$, and we know the following theorem from~~\cite{BBM01}.
	\begin{theorem}[\cite{BBM01}, Theorem 3']\label{Thm: BV Criterion}
		\textit{
		Let $A \subset \mb{R}^n$ be a bounded smooth domain, $f \in L^1(A)$. Then $f \in \mr{BV}(A)$ if and only if 
			\begin{equation}
				\liminf_{t \to 0^{+}} \frac{1}{t^{n+1}} \int_A \int_A |f(x) - f(y)| \cdot J \bigg( \frac{x-y}{t} \bigg) \ dxdy < \infty. 
			\end{equation}
		}
	\end{theorem}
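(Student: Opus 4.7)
The plan is to prove each direction of the equivalence separately, both via the comparison of the nonlocal double integral with the distributional gradient of $f$.

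For the forward direction ($f \in \mr{BV}(A) \Rightarrow$ the liminf is finite), I would first treat smooth $f$. The fundamental theorem of calculus gives
\begin{equation*}
|f(x) - f(y)| \leq |x-y| \int_0^1 |\nabla f(y + s(x-y))| \, ds,
\end{equation*}
and substituting this into the nonlocal double integral, followed by the change of variables $z = (x-y)/t$ and Fubini, yields
\begin{equation*}
\int_A \int_A |f(x) - f(y)| \, J\bigl((x-y)/t\bigr) \, dx\, dy \leq t^{n+1} \bigg(\int_{\mb{R}^n} J(z)\,|z|\, dz\bigg) \int_A |\nabla f| \, dx.
\end{equation*}
For general $f \in \mr{BV}(A)$, I would approximate by smooth $f_k \to f$ in $L^1(A)$ with $\int_A |\nabla f_k| \to |Df|(A)$ (available since $A$ is a smooth bounded domain) and pass to the limit using Fatou's lemma on the left and the uniform estimate above on the right.

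For the reverse direction, the strategy is to construct a family of smooth approximants $f_t$ of $f$ whose $W^{1,1}$-norms are controlled by the nonlocal functional at scale $t$, and then invoke the lower semicontinuity of total variation. Let $\rho$ be a smooth compactly supported mollifier with $\int \rho = 1$, and set $f_t = f * \rho_t$ where $\rho_t(x) = t^{-n} \rho(x/t)$. Using $\int \partial_i \rho = 0$, one can write
\begin{equation*}
\partial_i f_t(x) = \frac{1}{t^{n+1}} \int_A \bigl(f(x) - f(y)\bigr) (\partial_i \rho)\bigl((x-y)/t\bigr) \, dy,
\end{equation*}
and integrating in $x$ produces
\begin{equation*}
\|\nabla f_t\|_{L^1(A_t)} \leq \frac{C}{t^{n+1}} \int_A \int_A |f(x) - f(y)| \cdot |\nabla \rho|\bigl((x-y)/t\bigr) \, dx\, dy
\end{equation*}
on a subdomain $A_t \Subset A$. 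Taking $t = t_k$ along a sequence realizing the liminf and noting $f_{t_k} \to f$ in $L^1(A)$, the lower semicontinuity of the total variation on bounded sequences would then yield $f \in \mr{BV}(A)$.

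The main obstacle is the kernel comparison in the reverse direction: since $J$ is a general nonnegative $L^1$ kernel while $|\nabla \rho|$ is a fixed mollifier derivative, there is no a priori pointwise or even integrated relation between them, so the displayed bound above does not yet involve $J$. The resolution is to choose $\rho$ adapted to $J$, for instance by taking $\rho$ to be a smoothed version of a radial function with $|\nabla \rho|(z) \lesssim J(z)$; one must verify that such a $\rho$ exists as a genuine mollifier with integral one, which uses only that $J \in L^1$ and $\int J(z)|z|\, dz <\infty$. A secondary technical point is handling the boundary layer $A \setminus A_t$, which shrinks as $t \to 0^+$ and can be absorbed using an extension of $f$ by reflection across $\pa A$, permitted since $A$ is a smooth bounded domain.
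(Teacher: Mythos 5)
Your forward direction is essentially sound, with one caveat you do not address there: the bound $|f(x)-f(y)|\le|x-y|\int_0^1|\nabla f(y+s(x-y))|\,ds$ requires the segment $[x,y]$ to lie in $A$, which fails for non-convex $A$. You should first extend the smooth approximants to $C^1_c(\mb{R}^n)$ with $W^{1,1}$-norm controlled by $C(A)\,\|f_k\|_{W^{1,1}(A)}$ (possible since $A$ is bounded and smooth) and run the estimate on $\mb{R}^n\times\mb{R}^n$; since the theorem only asserts finiteness of the liminf, the extra constant and the lower-order $L^1$ term are harmless. The approximation-plus-Fatou step for general $f\in\mr{BV}(A)$ is fine.

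The genuine gap is in the reverse direction, exactly at the point you flag as the main obstacle, and your proposed resolution does not work as stated. Under the standing hypotheses ($J\ge 0$, radial, $J\in L^1$, $\int_{\mb{R}^n}J(z)|z|\,dz=1$) a \emph{smooth, compactly supported} mollifier with $|\nabla\rho|\lesssim J$ need not exist: $J$ may vanish on open annuli, or be positive only on a set of radii with empty interior (a fat Cantor set of radii, say), and then any continuous $\nabla\rho$ dominated by $J$ vanishes identically, forcing $\rho$ to be constant, which is incompatible with compact support and $\int\rho=1$. Smoothing a profile adapted to $J$ destroys the pointwise domination precisely where $J$ vanishes. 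The repair is to give up smoothness and compact support and take the radial antiderivative $\rho(z)=n\int_{|z|}^{\infty}J(s)\,ds$, which is nonnegative, radial, lies in $W^{1,1}(\mb{R}^n)$, has $\int_{\mb{R}^n}\rho=1$ and $|\nabla\rho|=nJ$ exactly; it is still an approximate identity with $\int\partial_i\rho=0$, so your formula for $\partial_i f_t$ survives, at the price of estimating the contribution of $y$ far from $x$ (which is controlled by $\|f\|_{L^1}\cdot t^{-1}\int_{|w|\ge d/t}J(w)\,dw=o(1)$ on a subdomain at distance $d$ from $\pa A$). With that choice your scheme closes. For context, the paper does not prove this statement; it quotes it from Bourgain--Brezis--Mironescu, whose argument sidesteps the kernel comparison entirely: they mollify $f$ at a fixed scale $\delta$ independent of $t$, use Jensen's inequality to show the nonlocal functional does not increase under mollification (up to boundary terms), and then invoke the exact first-order asymptotics of the functional for smooth functions, where the radial symmetry of $J$ enters, to bound $\int_A|\nabla f_\delta|$ uniformly in $\delta$.
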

	Here, for a bounded smooth domain $A \subset \mb{R}^n$,  we say that a function $f \in \mr{BV}(A)$ if and only if ${[f]}_{\mr{BV}(A)}< \infty$, where the $\mr{BV}$-seminorm ${[f]}_{\mr{BV}(A)}$ is defined by 
		\begin{equation}
			{[f]}_{\mr{BV}(A)} \equiv \sup \bigg\{ \int_A f \di(\varphi) \ \bigg| \ \varphi \in C_0 ^{\infty}(A, \mb{R}^n), ||\varphi||_{\infty} \leq 1 \bigg{\}}.
		\end{equation}
	This $\mr{BV}$-seminorm equals to $\int_A |\nabla f |$ if $f \in W^{1,1}(A)$, which means that $\mr{BV}(A)$ can be seen as an extension of $W^{1,1}(A)$. In particular, $\mr{BV}(A)$ contains indicator functions for domains $\Omega \subset A$ with well-behaved topological boundaries $\pa \Omega$. 
	For instance, if $\Omega \subset \subset A$ has a Lipschitz boundary $\pa \Omega$, then the divergence theorem implies that $\mathds{1}_{\Omega} \in \mr{BV}(A)$ with ${[\mathds{1}_{\Omega}]}_{\mr{BV}(A)}  = \mc{H}^{n-1}(\pa \Omega)$, even though $\mathds{1}_{\Omega} \notin W^{1,1}(A)$. 
	
	\begin{definition}\label{Def: Finite Perimeter}
		Let $\Omega$ be a Borel set in $\mb{R}^n$. We call $\Omega$ a Caccioppoli set (or a set of locally finite perimeter) if and only if for every bounded open set $A \subset \mb{R}^n$, ${[\mathds{1}_{\Omega}]}_{\mr{BV}(A)} < \infty$.
	\end{definition}

	Theorem~\ref{Thm: BV Criterion} provides a direct criterion for determining whether a function belongs to the $\mr{BV}$ class.
	Additionally, De Giorgi~~\cite{De53,De54} actually used the limit $\lim_{t \to 0^+} \mc{J}_t(\Omega) / t^{n+1}$ with $J(z) = e^{-{|z|}^2}$ as his definition of perimeter for sets of locally finite perimeter up to a constant. 
	As we have seen, when $\Omega$ is a domain with a Lipschitz boundary, we have that ${[\mathds{1}_{\Omega}]}_{\mr{BV}(A)}  = \mc{H}^{n-1}(\pa \Omega)$.
	In general, for every $f \in \mr{BV}(A)$, the limit in Theorem~\ref{Thm: BV Criterion} actually exists, as shown in the following theorem.

	\begin{theorem}[\cite{D02}, Theorem 1]\label{Thm: BV Limit}
		\textit{
		Let $A \subset \mb{R}^n$ be a bounded domain with a Lipschitz boundary $\pa A$, and let $f \in \mr{BV}(A)$. Then,
			\begin{equation}
				\lim_{t \to 0^{+}} \frac{1}{t^{n+1}} \int_A \int_A |f(x) - f(y)| \cdot J \bigg( \frac{x-y}{t} \bigg) \ dxdy = K(n) \cdot {[ f]}_{\mr{BV}(A)},
			\end{equation}
		where
			\begin{equation}\label{Eqn: K(n) for Sets of Finite Perimeter}
				K(n) = \frac{\Gamma(\frac{n}{2})}{\sqrt{\pi} \Gamma(\frac{n+1}{2})}.
			\end{equation}
		}
	\end{theorem}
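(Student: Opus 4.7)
The plan is to first prove the identity for smooth functions, where the BV seminorm equals $\int_A |\nabla f|$, and then extend to general $f \in \mr{BV}(A)$ by mollification.

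First I would change variables $z = (x-y)/t$ in the integral, which transforms it into
\begin{equation*}
\frac{1}{t^{n+1}} \int_A \int_A |f(x) - f(y)| \, J\Big(\frac{x-y}{t}\Big) \, dx \, dy = \frac{1}{t} \int_A \int_{\{y+tz \in A\}} |f(y+tz) - f(y)| \, J(z) \, dz \, dy .
\end{equation*}
For $f \in C^1(\overline{A})$, the difference quotient $[f(y+tz) - f(y)]/t$ converges pointwise to $\nabla f(y) \cdot z$ as $t \to 0^+$, and since $\pa A$ is Lipschitz the indicator $\mathds{1}_{A}(y+tz)$ tends to $1$ for a.e.\ $(y,z) \in A \times \mb{R}^n$. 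A uniform Lipschitz bound on $f$ combined with the integrability hypothesis on $J$ supplies the dominating function $||\nabla f||_{\infty} \cdot |z| \, J(z)$, so dominated convergence yields the limit $\int_A \int_{\mb{R}^n} |\nabla f(y) \cdot z| \, J(z) \, dz \, dy$. By radial symmetry the inner integral factors as $|\nabla f(y)| \cdot \int_{\mb{R}^n} |e_1 \cdot z| \, J(z) \, dz$; a direct spherical computation with the normalization $\int_0^{\infty} J(r) r^n \, dr = 1$ identifies this dimensional constant as precisely $K(n) = \Gamma(n/2)/(\sqrt{\pi}\,\Gamma((n+1)/2))$, settling the smooth case.

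Second, I would approximate a general $f \in \mr{BV}(A)$ by mollifications $f_{\epsilon} = \tilde{f} * \rho_{\epsilon}$, where $\tilde{f}$ is a BV-extension of $f$ to $\mb{R}^n$ (available because $\pa A$ is Lipschitz) and $\rho_{\epsilon}$ is a standard mollifier. Then $f_{\epsilon} \in C^{\infty}$, $f_{\epsilon} \to f$ in $L^1(A)$, and $\int_A |\nabla f_{\epsilon}| \to {[f]}_{\mr{BV}(A)}$ (strict BV convergence). For the upper bound, I would use Jensen's inequality in the form $|f_{\epsilon}(x) - f_{\epsilon}(y)| \leq (\rho_{\epsilon} * |f(\cdot + x) - f(\cdot + y)|)(0)$ together with Fubini, which passes the mollification outside the double integral and gives $\limsup_{t \to 0^+} t^{-(n+1)} \mc{J}_t(f) \leq K(n) \int_A |\nabla f_{\epsilon}| + o(1)$; letting $\epsilon \to 0^+$ completes this direction. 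The lower bound uses $L^1$-lower semicontinuity of the nonlocal functional: apply the smooth case to $f_{\epsilon}$, take $t \to 0^+$ first and $\epsilon \to 0^+$ second.

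The hard part will be the lower bound for general BV functions, because the pointwise Taylor analysis collapses on the jump set $J_f$ and one must verify that the nonlocal functional recovers the \emph{full} measure $|Df|(A) = {[f]}_{\mr{BV}(A)}$ including its singular and jump parts, not merely a lower bound. A natural way to overcome this is through the coarea formula ${[f]}_{\mr{BV}(A)} = \int_{-\infty}^{\infty} P(\{f > s\}, A) \, ds$, reducing the problem to sets of finite perimeter; for such a set $\Omega$ one then performs a blow-up analysis at $\mc{H}^{n-1}$-a.e.\ point of the reduced boundary $\pa^{*} \Omega$, where $\Omega$ looks locally like a half-space and the limit can be evaluated explicitly to give $K(n) \cdot \mc{H}^{n-1}(\pa^{*}\Omega \cap A)$. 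A delicate technical issue in this blow-up step is controlling the contributions from the non-reduced part of the boundary and from the ambient boundary $\pa A$; these are handled using the density estimates valid $\mc{H}^{n-1}$-a.e.\ on $\pa^{*}\Omega$ and the Lipschitz regularity of $\pa A$ to ensure the error terms vanish in the limit.
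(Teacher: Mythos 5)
First, note that the paper does not prove this statement at all: it is quoted verbatim as Theorem~1 of D\'avila's paper \cite{D02} (and its predecessor in \cite{BBM01}), so there is no internal proof to compare against; your proposal must therefore be judged on its own merits as a reconstruction of D\'avila's argument.

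Your overall architecture (smooth case by dominated convergence, then mollification) is the right skeleton, and your computation of $K(n)$ as the spherical average of $|e_1\cdot\sigma|$ is correct. But there is a genuine gap: you have attached the Jensen/mollification step to the wrong inequality. Jensen gives $|f_\epsilon(x)-f_\epsilon(y)|\leq \int \rho_\epsilon(w)\,|f(x-w)-f(y-w)|\,dw$, hence $\mc{J}_t(f_\epsilon, A_\delta)\leq \mc{J}_t(f,A)$ for $\epsilon$ small; this is a \emph{lower} bound on $\mc{J}_t(f)$, and combined with the smooth case and lower semicontinuity of the total variation it already yields $\liminf_{t\to 0^+} t^{-(n+1)}\mc{J}_t(f)\geq K(n)\,{[f]}_{\mr{BV}(A)}$, jump and Cantor parts included --- so the blow-up/coarea machinery you flag as ``the hard part'' is unnecessary for that direction. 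What is actually missing is the upper bound $\limsup_{t\to 0^+} t^{-(n+1)}\mc{J}_t(f)\leq K(n)\,{[f]}_{\mr{BV}(A)}$, which your text claims to derive from Jensen but which Jensen cannot give. The naive route via the BV translation estimate $\int |f(x+h)-f(x)|\,dx\leq |h|\cdot|Df|(\mb{R}^n)$ overshoots by the factor $1/K(n)>1$ (it loses the angular factor $|e_1\cdot z/|z||$) and also picks up the mass created by the extension across $\pa A$. D\'avila's proof of this direction is the delicate step: one localizes to small balls on which, by Besicovitch differentiation of the vector measure $Df$, the polar direction $\sigma=dDf/d|Df|$ is nearly constant, and then uses the directional translation estimate $\int|f(x+h)-f(x)|\,dx\leq |h\cdot\sigma|\,|Df|+o(|h|)$ to recover the factor whose spherical average is exactly $K(n)$, together with a separate argument showing that the strip near $\pa A$ contributes $o(t^{n+1})$. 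None of this appears in your proposal, so as written the upper bound is not established.
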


	We then see that Theorem~\ref{Thm: BV Limit} holds true for $f = \mathds{1}_{\Omega}$ when $\Omega$ is a Caccioppoli set.
	
	\begin{corollary}\label{Cor: Finite Perimeter Limit}
		\textit{
		For the same $A, K(n)$ in Theorem\ref{Thm: BV Limit}, if $\Omega$ is a Caccioppoli set, then
			\begin{equation}
				\lim_{t \to 0^{+}} \frac{1}{t^{n+1}} \int_{\Omega \cap A} \int_{\Omega^c \cap A}   J \bigg( \frac{x-y}{t} \bigg) \ dxdy = 2 K(n) \cdot \mc{H}^{n-1}(\pa^* \Omega \cap A) .
			\end{equation}
		}
	\end{corollary}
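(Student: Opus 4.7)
The plan is to deduce Corollary~\ref{Cor: Finite Perimeter Limit} directly from Theorem~\ref{Thm: BV Limit} by specializing the BV test function to $f = \mathds{1}_{\Omega}$. First, since $\Omega$ is a Caccioppoli set, Definition~\ref{Def: Finite Perimeter} applied to the bounded open set $A$ gives ${[\mathds{1}_{\Omega}]}_{\mr{BV}(A)} < \infty$, so $\mathds{1}_{\Omega}$ lies in $\mr{BV}(A)$ and Theorem~\ref{Thm: BV Limit} is available.

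Next, I would unpack the integrand. Because $\mathds{1}_{\Omega}$ takes only the values $0$ and $1$, the quantity $|\mathds{1}_{\Omega}(x) - \mathds{1}_{\Omega}(y)|$ equals $1$ precisely when exactly one of $x,y$ lies in $\Omega$, and vanishes otherwise. Splitting $A \times A$ according to which of the two points is in $\Omega$ and using the radial symmetry of $J$ (which makes $J((x-y)/t)$ symmetric in $x$ and $y$), I get
\begin{equation*}
    \int_A \int_A |\mathds{1}_{\Omega}(x) - \mathds{1}_{\Omega}(y)| \cdot J\!\left(\frac{x-y}{t}\right) dx\, dy = 2 \int_{\Omega \cap A} \int_{\Omega^c \cap A} J\!\left(\frac{x-y}{t}\right) dx\, dy.
\end{equation*}
Applying Theorem~\ref{Thm: BV Limit} to the left-hand side yields that the limit, after dividing by $t^{n+1}$, equals $K(n) \cdot {[\mathds{1}_{\Omega}]}_{\mr{BV}(A)}$.

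Finally, I would invoke the classical identification of the BV seminorm of an indicator function with the perimeter: by the structure theorem of De Giorgi, together with Federer's theorem identifying $\pa^* \Omega$ with the essential boundary up to an $\mc{H}^{n-1}$-null set, we have ${[\mathds{1}_{\Omega}]}_{\mr{BV}(A)} = \mc{H}^{n-1}(\pa^* \Omega \cap A)$. Substituting and dividing by $2$ recovers the claimed formula. There is essentially no obstacle here beyond citing this standard identification and the symmetrization step; the only subtlety worth mentioning is that the hypothesis of Theorem~\ref{Thm: BV Limit} (bounded Lipschitz domain $A$) is already inherited from the statement of the corollary, and the Caccioppoli condition is exactly what is needed to make $\mathds{1}_{\Omega}$ a legitimate BV test function on $A$.
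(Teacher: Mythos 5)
Your proposal is correct and follows essentially the same route the paper intends: apply Theorem~\ref{Thm: BV Limit} with $f = \mathds{1}_{\Omega}$, use the symmetrization identity (already displayed in Section~\ref{Sec: Limit BV}) to rewrite the double integral as twice the integral over $(\Omega\cap A)\times(\Omega^c\cap A)$, and invoke De Giorgi's structure theorem to identify ${[\mathds{1}_{\Omega}]}_{\mr{BV}(A)}$ with $\mc{H}^{n-1}(\pa^*\Omega\cap A)$. No gaps.
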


	Here, $\pa^* \Omega$ is called the reduced boundary of $\Omega$, which is a subset of $\pa \Omega$ on which the density of $\Omega$ in balls centered at those points is $1/2$. This is a consequence of De Giorgi's structure theorem. See Corollary 15.8 and Theorem 15.9 in~~\cite{M12}, for example.
	For $\Omega$ with a Lipschitz boundary, $\pa^* \Omega$ includes those differentiable points on $\pa \Omega$.
	We can also generalize Corollary~\ref{Cor: Finite Perimeter Limit} slightly.
	
	\begin{corollary}\label{Cor: Finite Perimeter Limit of Two Sets}
		\textit{
		For the same $A, K(n)$ in Theorem\ref{Thm: BV Limit}, if $\Omega_1$ and $\Omega_2$ are two disjoint Caccioppoli sets, then
			\begin{equation}
				\begin{split}
					& \quad \lim_{t \to 0^{+}} \frac{1}{t^{n+1}} \int_{\Omega_1 \cap A} \int_{\Omega_2 \cap A}  J \bigg( \frac{x-y}{t} \bigg) \ dxdy 
				\\	&=  2K(n) \cdot \mc{H}^{n-1}( \pa^* \Omega_1 \cap \pa^* \Omega_2 \cap A) .
				\end{split}
			\end{equation}
		}
	\end{corollary}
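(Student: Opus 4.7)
The plan is to reduce this two-set statement to the single-set Corollary~\ref{Cor: Finite Perimeter Limit} via an elementary inclusion--exclusion identity, and then to convert the resulting linear combination of reduced-boundary measures into the desired $\mc{H}^{n-1}$-measure of the overlap $\pa^* \Omega_1 \cap \pa^* \Omega_2$ by a pointwise density argument.

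For the reduction, recall that finite unions and intersections of Caccioppoli sets are again Caccioppoli, so Corollary~\ref{Cor: Finite Perimeter Limit} applies to $\Omega_1$, $\Omega_2$, and $\Omega_1 \cup \Omega_2$. Writing
\begin{equation*}
	I_t(\Omega) \equiv \int_{\Omega \cap A} \int_{\Omega^c \cap A} J \bigg( \frac{x-y}{t} \bigg) \ dx dy,
\end{equation*}
I use $\Omega_1 \cap \Omega_2 = \emptyset$ to decompose $\Omega_1^c \cap A = (\Omega_2 \cap A) \sqcup ((\Omega_1 \cup \Omega_2)^c \cap A)$ and analogously for $\Omega_2^c \cap A$; combining this with the symmetry $J(z) = J(-z)$ coming from radial symmetry yields the algebraic identity
\begin{equation*}
	I_t(\Omega_1) + I_t(\Omega_2) = 2 \int_{\Omega_1 \cap A} \int_{\Omega_2 \cap A} J \bigg( \frac{x-y}{t} \bigg) \ dx dy + I_t(\Omega_1 \cup \Omega_2).
\end{equation*}
Dividing by $t^{n+1}$, letting $t \to 0^+$, and applying Corollary~\ref{Cor: Finite Perimeter Limit} to each of the three $I_t$-terms expresses the limit of the double integral in question as
\begin{equation*}
	K(n) \Bigl[ \mc{H}^{n-1}(\pa^* \Omega_1 \cap A) + \mc{H}^{n-1}(\pa^* \Omega_2 \cap A) - \mc{H}^{n-1}\bigl( \pa^*(\Omega_1 \cup \Omega_2) \cap A \bigr) \Bigr].
\end{equation*}

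The remaining and most delicate step is to identify this combination with $2\,\mc{H}^{n-1}(\pa^* \Omega_1 \cap \pa^* \Omega_2 \cap A)$. By Federer's theorem cited after Theorem~\ref{Thm: Main Finite Perimeter Limit}, each reduced boundary can be replaced by the corresponding essential boundary up to an $\mc{H}^{n-1}$-null set, and at an $\mc{H}^{n-1}$-a.e.\ point $x \in \pa^* \Omega_i$ the set $\Omega_i$ has density exactly $1/2$ at $x$. Since $\Omega_1$ and $\Omega_2$ are disjoint, if $x \in \pa^* \Omega_1 \cap \pa^* \Omega_2$ then $\Omega_1 \cup \Omega_2$ has density $1$ at $x$, so $x \notin \pa^*(\Omega_1 \cup \Omega_2)$; whereas if $x$ lies in exactly one of $\pa^* \Omega_1$, $\pa^* \Omega_2$, the union $\Omega_1 \cup \Omega_2$ still has density $1/2$ at $x$, so $x \in \pa^*(\Omega_1 \cup \Omega_2)$. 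This pointwise analysis yields the $\mc{H}^{n-1}$-a.e.\ set equality $\pa^*(\Omega_1 \cup \Omega_2) = \pa^* \Omega_1 \triangle \pa^* \Omega_2$ inside $A$, from which the numerical identity $\mc{H}^{n-1}(\pa^* \Omega_1 \cap A) + \mc{H}^{n-1}(\pa^* \Omega_2 \cap A) - \mc{H}^{n-1}(\pa^*(\Omega_1 \cup \Omega_2) \cap A) = 2\,\mc{H}^{n-1}(\pa^* \Omega_1 \cap \pa^* \Omega_2 \cap A)$ follows by additivity. The main obstacle is precisely this reduced-boundary identity: it is standard in geometric measure theory but must be handled carefully with $\mc{H}^{n-1}$-null-set bookkeeping so as not to conflate the topological, reduced, and essential boundaries.
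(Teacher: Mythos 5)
Your proposal is correct and follows essentially the same route as the paper: the identical inclusion--exclusion identity $I_t(\Omega_1)+I_t(\Omega_2) = 2\int_{\Omega_1\cap A}\int_{\Omega_2\cap A}J(\cdot/t)+I_t(\Omega_1\cup\Omega_2)$, an application of Corollary~\ref{Cor: Finite Perimeter Limit} to each term, and the reduced-boundary identity $\pa^*(\Omega_1\cup\Omega_2)=\pa^*\Omega_1\,\triangle\,\pa^*\Omega_2$ up to $\mc{H}^{n-1}$-null sets. The only difference is cosmetic: the paper cites Theorem 16.3 of~\cite{M12} for that last identity, whereas you derive it by the density argument (noting, as you should, that at $\mc{H}^{n-1}$-a.e.\ point of $\pa^*\Omega_1\setminus\pa^*\Omega_2$ the disjointness forces $\Omega_2$ to have density $0$, not merely density different from $1/2$).
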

	\begin{proof}
		Notice that because $J$ is radially symmetric, and $\Omega_1 \cap \Omega_2 = \emptyset$,
			\begin{equation}
				\begin{split}
					&\quad 2\int_{\Omega_1 \cap A} \int_{\Omega_2 \cap A}  J \bigg( \frac{x-y}{t} \bigg) \ dxdy  
				\\	&= \int_{\Omega_1 \cap A} \int_{\Omega_1 ^c \cap A}  J \bigg( \frac{x-y}{t} \bigg) \ dxdy + \int_{\Omega_2 \cap A} \int_{\Omega_2 ^c \cap A}  J \bigg( \frac{x-y}{t} \bigg) \ dxdy 
				\\	& \quad - \int_{(\Omega_1 \cup \Omega_2) \cap A} \int_{{(\Omega_1 \cup \Omega_2)}^c \cap A}  J \bigg( \frac{x-y}{t} \bigg) \ dxdy . 
				\end{split}
			\end{equation}
		And by Corollary\ref{Cor: Finite Perimeter Limit}, we know that the limit is
			\begin{equation}
				2K(n) \cdot (\mc{H}^{n-1}( \pa^* \Omega_1 \cap A) +  \mc{H}^{n-1}( \pa^* \Omega_2 \cap A) -  \mc{H}^{n-1}( \pa^* (\Omega_1\cup \Omega_2) \cap A)).
			\end{equation}
		Because $\Omega_1 \cap \Omega_2 = \emptyset$, Theorem 16.3 in~~\cite{M12} implies that $\pa^* (\Omega_1\cup \Omega_2)$ is, up to a set of $\mc{H}^{n-1}$ measure zero, $(\pa^* \Omega_1 \backslash \pa^* \Omega_2) \cup (\pa^* \Omega_2 \backslash \pa^* \Omega_1)$.
		Hence, the above equation equals to
			\begin{equation}
				4K(n) \cdot \mc{H}^{n-1}( \pa^* \Omega_1 \cap \pa^* \Omega_2 \cap A)  .
			\end{equation}
	\end{proof}


\subsection{Sobolev Spaces}\label{Sec: Limit Sobolev}
	There is another result in~~\cite{BBM01} that deals with the case for the functional
		\begin{equation}
			\int_A \int_A {|f(x) - f(y)|}^p \cdot J \bigg( \frac{x-y}{t} \bigg) \ dxdy,
		\end{equation}
	where $1< p <\infty$. 
	To be more precise, we assume that $J(z)$ for $z \in \mb{R}^n$ is nonnegative and is radially symmetric, i.e., $J(z) = J(|z|)$.
	We further assume that $\int_{\mb{R}^n} J(z) \cdot {|z|}^p \ dz = \int_0 ^{\infty} J(r)\cdot r^{n+ p - 1} \ dr = 1$. One can compare these assumptions with those we made in Section~\ref{Sec: Bounds} and also in Section~\ref{Sec: Limit BV}.

	\begin{definition}
		Let $A \subset \mb{R}^n$ be a bounded smooth domain, $f \in L^p(A)$ and $1 \leq p < \infty$. We say that $f \in W^{1,p}(A)$ if and only if ${[ f]}_{W^{1,p}(A)} < \infty$. Here, the seminorm ${[ f]}_{W^{1,p}(A)} $ is defined by
			\begin{equation}
				{[ f]}_{W^{1,p}(A)}  = {\bigg(\int_A {|\nabla f |}^p \ dx\bigg)}^{\frac{1}{p}},
			\end{equation}
		where $\nabla f$ is the weak derivative of $f$.
	\end{definition}

	Then, we have the following theorem from~~\cite{BBM01} when $1 < p < \infty$.
	\begin{theorem}[\cite{BBM01}, Theorem 2]\label{Thm: W^{1,p} Limits}
		\textit{
		Let $A \subset \mb{R}^n$ be a bounded smooth domain, $f \in L^p(A)$ and $1< p < \infty$. Then,
			\begin{equation}
				\lim_{t \to 0^{+}} \frac{1}{t^{n+p}} \int_A \int_A {|f(x) - f(y)|}^p \cdot J \bigg( \frac{x-y}{t} \bigg) \ dxdy  = K(n,p) \cdot { \big({[ f]}_{W^{1,p}(A)} \big)}^p,
			\end{equation}
		with the convention that ${[ f]}_{W^{1,p}(A)} = \infty$ if $f \notin W^{1,p}(A)$. Here, $K(n,p)$ is a constant depending on $n,p$.
		}
	\end{theorem}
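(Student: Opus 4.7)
The plan is to follow a rescaling and density strategy in the spirit of Bourgain--Brezis--Mironescu. After the change of variables $y = x+tz$ (using $J(-z)=J(z)$), the quantity in question equals
\begin{equation*}
\int_A \int_{A_{x,t}} \left| \frac{f(x+tz) - f(x)}{t} \right|^p J(z) \, dz \, dx,
\end{equation*}
where $A_{x,t} \equiv \{ z \in \mb{R}^n : x + tz \in A\}$ exhausts $\mb{R}^n$ for each interior point $x$ of $A$ as $t \to 0^+$. The intuition is that the difference quotient converges formally to $\nabla f(x)\cdot z$, so after $z$-integration against $J$ and use of the radial symmetry the leading contribution should be $|\nabla f(x)|^p \cdot K(n,p)$, with $K(n,p)$ an explicit dimensional constant of the form $\int_{S^{n-1}} |\omega \cdot e|^p \, d\sigma(\omega)$ (times a normalization factor fixed by the assumption $\int_{\mb{R}^n} J(z)|z|^p\, dz = 1$).

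First I would prove the identity for $f \in C^2(\overline{A})$. Taylor's theorem gives pointwise convergence of the difference quotient to $\nabla f(x)\cdot z$, while the uniform bound $|f(x+tz)-f(x)|/t \leq \|\nabla f\|_\infty |z|$ together with the moment condition $\int_{\mb{R}^n} J(z)|z|^p \, dz < \infty$ legitimizes dominated convergence. The inner $z$-integral then decouples by radial symmetry of $J$ into a radial factor (equal to $1$ by normalization) times a spherical average yielding $K(n,p)$; a boundary contribution from the difference between $A_{x,t}$ and $\mb{R}^n$ is localized near $\pa A$ and vanishes because $|\pa A|_{n-1} < \infty$ and $A$ is smooth.

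Next I would extend to general $f \in W^{1,p}(A)$ by density. The key uniform estimate is
\begin{equation*}
\int_A \int_{A_{x,t}} \left| \frac{g(x+tz) - g(x)}{t} \right|^p J(z) \, dz \, dx \leq C \int_A |\nabla g|^p \, dx \quad \text{for } g \in W^{1,p}(A),
\end{equation*}
which follows from the classical $L^p$ translation bound $\|g(\cdot + h) - g(\cdot)\|_{L^p} \leq |h| \, \|\nabla g\|_{L^p}$, Fubini, and the moment of $J$. Writing $f = f_\epsilon + (f-f_\epsilon)$ with $f_\epsilon \to f$ in $W^{1,p}(A)$, applying the smooth-case identity to $f_\epsilon$ and this uniform estimate to the remainder controls the error uniformly in $t$ and shows that both $\limsup$ and $\liminf$ of the rescaled functional equal $K(n,p) [f]_{W^{1,p}(A)}^p$.

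The main obstacle is the converse built into the convention ``$[f]_{W^{1,p}(A)} = \infty$ if $f \notin W^{1,p}(A)$'': one must show that finiteness of the $\liminf$ forces $f \in W^{1,p}(A)$. For $p > 1$, I would use the positivity of the spherical average of $J$ to extract a direction $e$ and a sequence $t_k \to 0^+$ along which the finite-difference operators $(f(\cdot + t_k e) - f(\cdot))/t_k$ are bounded in $L^p_{\loc}(A)$; reflexivity of $L^p$ produces a weak limit, which must coincide with the distributional derivative $\pa_e f$, placing $\nabla f \in L^p$. This compactness step is the essential difficulty and is precisely the feature that distinguishes the $p > 1$ regime from the $p = 1$ BV endpoint already isolated in Theorem~\ref{Thm: BV Limit}.
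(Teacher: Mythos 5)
This statement is quoted verbatim from \cite{BBM01} (their Theorem~2); the paper offers no proof of its own, so the only meaningful comparison is with the original Bourgain--Brezis--Mironescu argument, and your proposal is essentially a correct reconstruction of it: rescale by $y=x+tz$, prove the identity for smooth $f$ by dominated convergence using the moment bound on $J$, pass to general $f\in W^{1,p}$ by density plus a uniform translation estimate, and handle the converse by a compactness argument exploiting reflexivity of $L^p$ (which is exactly what separates $p>1$ from the BV endpoint in Theorem~\ref{Thm: BV Limit}). Two steps deserve more care than you give them. First, the bound $\|g(\cdot+h)-g\|_{L^p}\leq |h|\,\|\nabla g\|_{L^p}$ is a statement about $W^{1,p}(\mb{R}^n)$; on a bounded domain $A$ the segment from $x$ to $x+tz$ may leave $A$, so you must first apply an extension operator (available since $A$ is smooth) and accept a constant $C=C(A)$ in the uniform estimate --- harmless for the error term in the density argument, but it means the sharp constant $K(n,p)$ can only come from the smooth case, not from the estimate itself. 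Second, in the converse step, extracting a single direction $e$ with bounded difference quotients does not by itself give $\nabla f\in L^p$; you need a spanning set of directions, and since $J$ is only assumed nonnegative, radial, and normalized, its positivity set is a union of spheres of a positive-measure set of radii, from which all directions can indeed be recovered --- but this requires an explicit Fubini/Chebyshev selection that your sketch omits. The cleaner route, and the one taken in \cite{BBM01}, is to mollify: Jensen's inequality shows the rescaled functional does not increase under convolution with $\rho_\delta$, so the smooth-case identity applied to $f*\rho_\delta$ gives $K(n,p)\int|\nabla(f*\rho_\delta)|^p\leq\liminf_t(\cdot)<\infty$ uniformly in $\delta$, and weak compactness in $L^p$ ($p>1$) finishes. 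With those two repairs your outline matches the cited proof.
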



\subsection{Koch snowflake and sets of self-similar fractal boundary}\label{Sec: Limit Fractal}

	\indent

	In stead of discussing sets with general self-similar fractal boundaries in detail, we will focus on a specific example, Koch snowflake (Figure~\ref{Fig: Snowflake}), in this section. 
	But the idea is actually applicable to many other sets with self-similar boundaries constructed in a similar way to the Koch snowflake.
	And we will provide an alternative example, constructed using a similar method as the Koch snowflake but with a different scale (Figure~\ref{Fig: Twistedflake}). 
	Figure~\ref{Fig: Constructflake} shows the first several steps of the construction of these shrunk Koch snowflakes  (or Koch curves).
	In order to maintain consistency with Section~\ref{Sec: Bounds}, we will take $n=2$ in this subsection, but continue to use the notation $n$ when referring to the dimension.

	\begin{figure}[ht!]
		\centering
		\includegraphics[width=80mm]{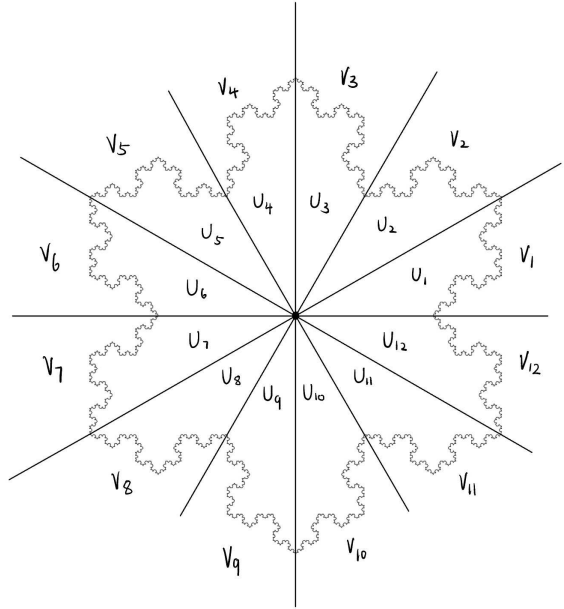}
		\caption{Standard Koch Snowflake\label{Fig: Snowflake}}
	\end{figure}

	\begin{figure}[ht!]
		\centering
		\includegraphics[width=110mm]{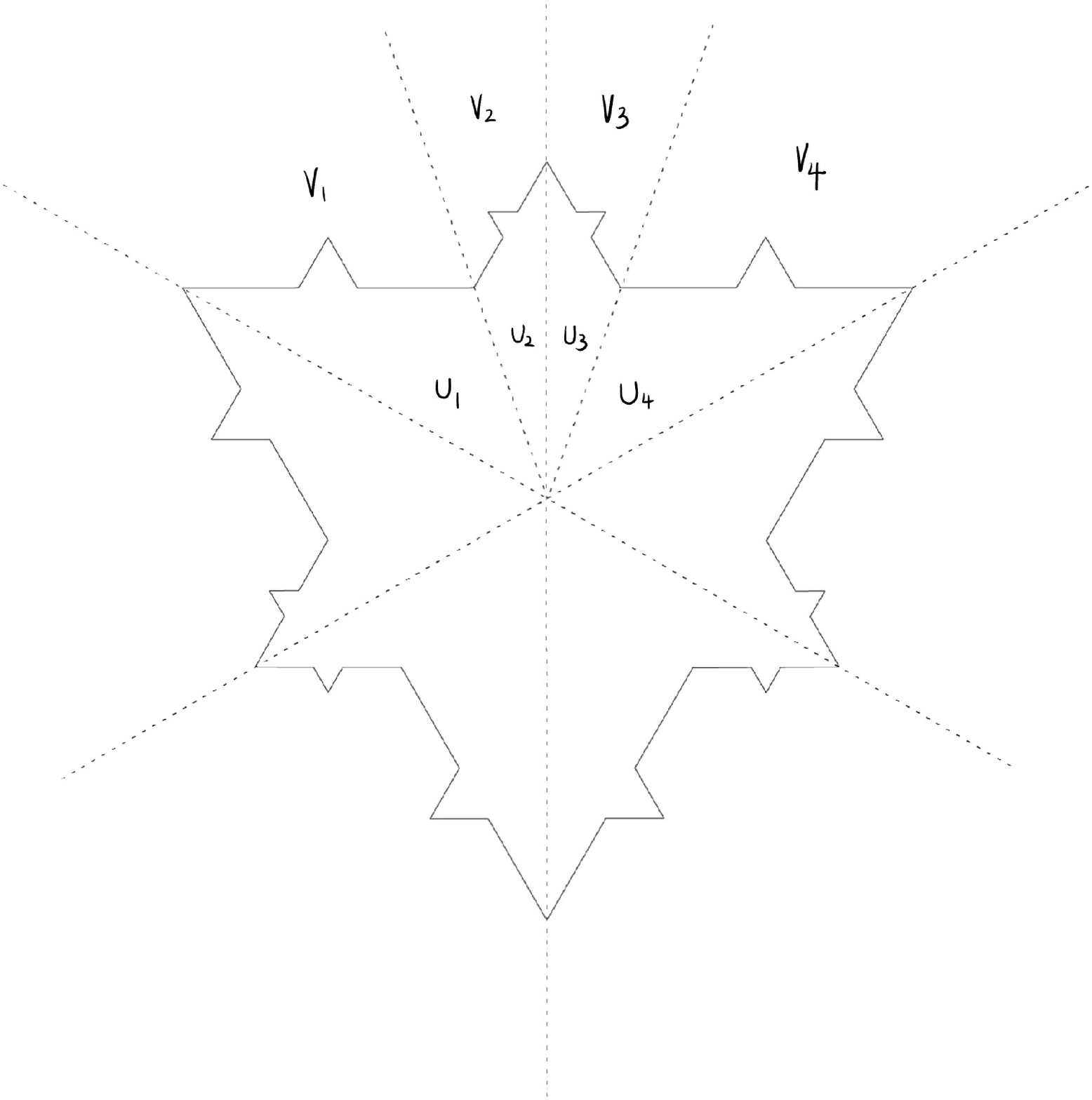}
		\caption{Snowflake with $\eta = 5$\label{Fig: Twistedflake}}
	\end{figure}

	\begin{figure}[ht!]
		\centering
		\includegraphics[width=80mm]{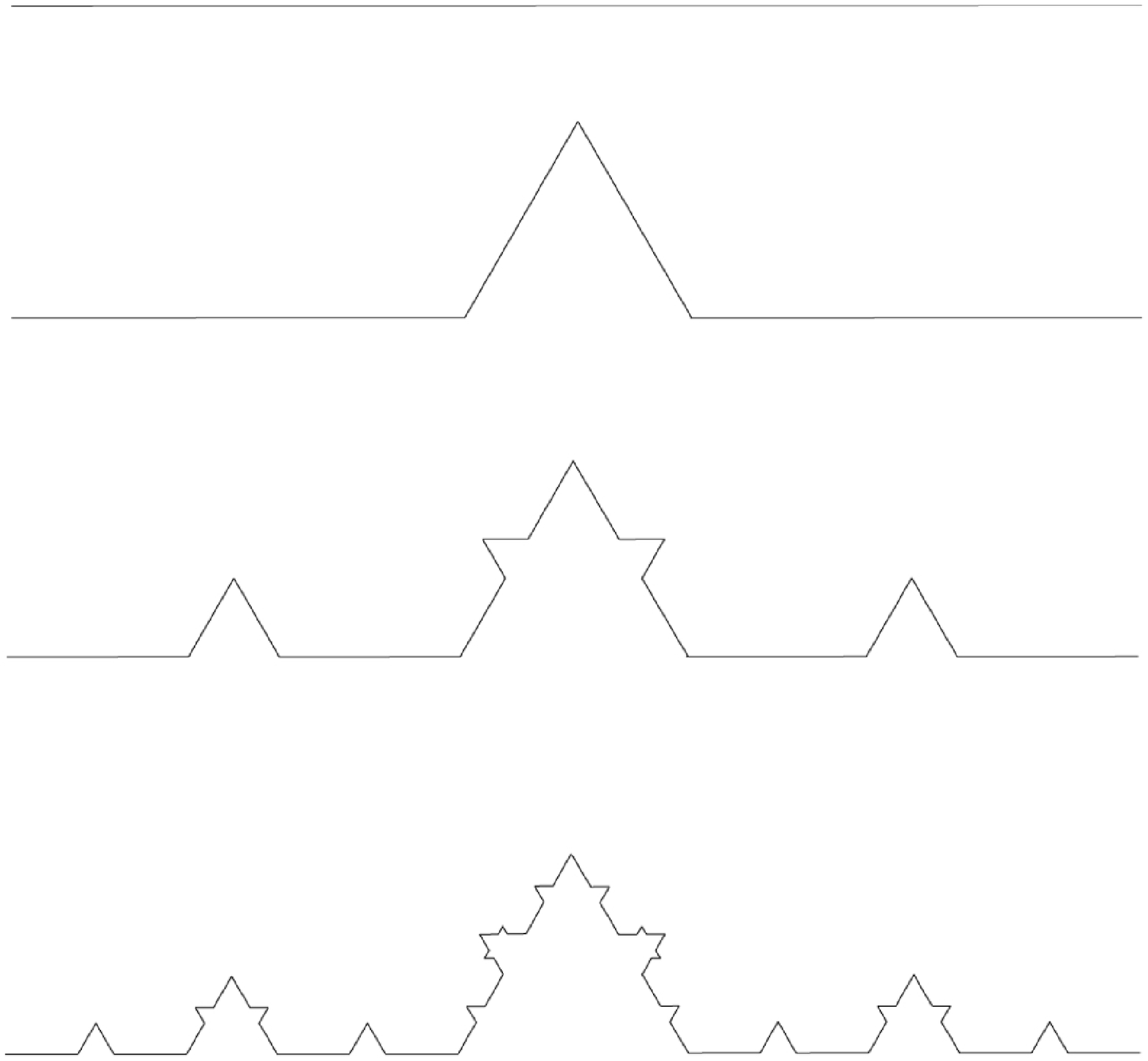}
		\caption{Koch Curve with $\eta = 5$. We do this iterative process on each side of a equilateral triangle as shown in Figure~\ref{Fig: Twistedflake}.\label{Fig: Constructflake}}
	\end{figure}

	The standard Koch snowflake, which we say is of scale $\eta = 3$, is constructed iteratively. The process starts with an equilateral triangle of side length $1$. Then, on the middle of each side of this triangle, we add a smaller equilateral triangle of side length $1/\eta=1/3$, which results in a regular hexagram.
	We repeat this process iteratively, adding an equilateral triangle of side length ${(1/3)}^2$ to the middle of each side of the hexagram. Taking the limit of this process yields an open set with a boundary of Minkowski dimension (and also Hausdorff dimension) $\alpha(3) \equiv \log_3(4) $. For further details, see, for example,~~\cite{F03}.

        A general Koch snowflake of scale $\eta>1$ is constructed in a similar manner, as illustrated in Figure~\ref{Fig: Constructflake}.
	The iterative process is carried out on each side of an equilateral triangle, where a small equilateral triangle of side length $1/\eta$ is added to the middle of the original line segment (one side of the original equilateral triangle). Then we obtain $4$ line segments. 
	On each of these line segments, a smaller equilateral triangle with side length $1/\eta \times (\text{side length of this line segment})$ is then added to its middle, generating a total of $4^2$ line segments.
	The process is repeated by adding smaller equilateral triangles to the new line segments.
	The limit open set is the Koch snowflake of scale $\eta$, whose boundary has Minkowski dimension $\alpha(\eta)$ satisfying 
            \begin{equation}\label{Eqn: Dimension of Twistedflake}
                2\cdot {\bigg(\frac{\eta-1}{2\eta}\bigg)}^{\alpha(\eta)} + 2 \cdot {\bigg(\frac{1}{\eta}\bigg)}^{\alpha(\eta)} = 1.
            \end{equation}
        $\alpha(\eta)$ satisfies this equation because of the self-similarity. Let us see Figure~\ref{Fig: Twistedflake}.
	The boundary part between $U_1$ and $V_1$ is $(\eta-1)/2$ times larger than the boundary part between $U_2$ and $V_2$, while the boundary part between $(U_1 \cup U_2 \cup U_3 \cup U_4)$ and $(V_1 \cup V_2 \cup V_3 \cup V_4)$ is $\eta$ times larger than the boundary part between $U_2$ and $V_2$.
	Additionally, the the boundary part between $(U_1 \cup U_2 \cup U_3 \cup U_4)$ and $(V_1 \cup V_2 \cup V_3 \cup V_4)$ is the sum of twice the boundary part between $U_1$ and $V_1$ and twice the boundary part between $U_2$ and $V_2$.

	Notice that the Koch snowflake satisfies the property we need to prove Corollary~\ref{Cor: Main Upper and Lower Limits}, i.e., it has positive finite lower and upper bounds on Minkowski contents, together with density lower bounds at the boundary of the Koch snowflake. 
	This is because the Koch snowflake is a quasiball and therefore an NTA domain.  For example, see Remark 2.1 in the survey paper by~~\cite{T17}. 
	Additionally, in~~\cite{M10}, some $3$-dimensional analogs of snowflakes were also shown to be quasiballs.
	In particular, in the $2$-dimensional case, Ahlfors's $3$-point condition states that a Jordan curve $\gamma$ in $\mb{R}^2$ is a quasicircle if and only if for any two points $a , b$ on $\gamma$, the smaller arc between $a,b$ has a diameter comparable to the distance $|a-b|$.
	
	In~~\cite{G00}, similar limiting behaviors of Minkowski contents were established for general self-similar fractals.

	\begin{theorem}\label{Thm: Snowflake Limit}
		\textit{
		Assume that $J(z)$ for $z \in \mb{R}^n$ is a nonnegative function in $L^1(\mb{R}^n)$ such that 
			\begin{equation}
				\int_{\mb{R}^n} J(z) \cdot |z|^{n} \ dz \leq C_J < \infty,
			\end{equation}
		for a positive constant $ C_J$. Let $\Omega = \Omega(3)$ be the standard Koch snowflake. Then, for any $t >0$, the limit
			\begin{equation}
				\lim_{i \to \infty}{(3^i/t)}^{2n- \alpha(3)} \int_{\Omega \times \Omega^c} J \bigg( \frac{x-y}{(t/3^i)} \bigg) \ dxdy 
			\end{equation}
		exists and is positive.
		}
	\end{theorem}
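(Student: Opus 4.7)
The plan is to exploit the exact self-similarity of the Koch curve to express the rescaled integral as a sum whose dominant term is exactly proportional to $4^i$ and whose corrections vanish as $i\to\infty$. After the substitution $x\mapsto (t/3^i)x$, $y\mapsto (t/3^i)y$,
\begin{equation*}
\mc{J}_{t/3^i}(\Omega) = (t/3^i)^{2n}\,\mc{N}\bigl((3^i/t)\,\Omega\bigr),\qquad \mc{N}(E):=\int_E\int_{E^c} J(x-y)\,dx\,dy,
\end{equation*}
so with $\lambda_i:=3^i/t$ the quantity under consideration equals $\lambda_i^{-\alpha(3)}\mc{N}(\lambda_i\Omega)$, and the task is to identify its limit as $i\to\infty$.

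The geometric input is that $\pa(\lambda_i\Omega)$ splits, via iterating the Koch-curve self-similarity $K=\bigcup_{j=1}^4 f_j(K)$ (each $f_j$ a similarity of ratio $1/3$) exactly $i$ times, into $3\cdot 4^i$ mutually congruent Koch curves of length $1/t$, joined at $3\cdot 4^i$ vertices. A direct induction on the snowflake iteration classifies these vertices into exactly two types: $4^i+2$ convex vertices with $60^\circ$ interior angle, and $2\cdot 4^i-2$ reflex vertices with $240^\circ$ interior angle. I would then fix a canonical partition of $\Omega\times\Omega^c$ (for instance by assigning each pair $(x,y)$ to the partition element whose associated piece or vertex minimizes the distance to $(x+y)/2$) and decompose
\begin{equation*}
\mc{N}(\lambda_i\Omega) = 3\cdot 4^i\,G + (4^i+2)\,V_{60} + (2\cdot 4^i-2)\,V_{240} + E_i,
\end{equation*}
where $G$ is the bulk contribution of a single Koch curve of length $1/t$, $V_{60}$ and $V_{240}$ are local vertex contributions, all three depending on $t$ but not on $i$ by the pointwise congruence of the pieces and vertices across different $i$, and $E_i$ collects cross-interactions between distinct pieces.

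The moment assumption $\int J(z)|z|^n\,dz<\infty$ yields the tail bound $\int_{|z|\geq R} J(z)\,dz\leq C_J R^{-n}$. Using this, the cross-interactions between non-adjacent pieces, whose pairwise separations grow geometrically through the self-similar hierarchy, can be summed to give $E_i=o(4^i)$. Multiplying by $\lambda_i^{-\alpha(3)}=4^{-i}t^{\alpha(3)}$ (using $3^{\alpha(3)}=4$), the $4^i$ factors cancel in the leading terms and the remaining contributions vanish in the limit, so
\begin{equation*}
\lim_{i\to\infty}\lambda_i^{-\alpha(3)}\mc{N}(\lambda_i\Omega) = t^{\alpha(3)}\bigl[3G + V_{60} + 2V_{240}\bigr],
\end{equation*}
with positivity of the limit inherited from Corollary \ref{Cor: Main Upper and Lower Limits}, which provides the strictly positive lower bound on $\liminf_{s\to 0^+}s^{\alpha-2n}\mc{J}_s(\Omega)$. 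The main obstacle is to set up the partition so that $G$, $V_{60}$, $V_{240}$ are genuinely well-defined constants (i.e., so that contributions from beyond a fixed radius around each partition element decay) and to quantify the decay of $E_i$; both tasks reduce to careful bookkeeping with the moment estimate on $J$ combined with the geometric separation built into the self-similar hierarchy.
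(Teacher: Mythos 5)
Your rescaling to $\lambda_i^{-\alpha(3)}\mc{N}(\lambda_i\Omega)$ is correct, your vertex census checks out, and positivity via the lower-bound machinery (Lemma~\ref{Lem: General Lower Bound}/Corollary~\ref{Cor: Main Upper and Lower Limits}; the snowflake is NTA) is fine. But the core of your argument --- that $\mc{N}(\lambda_i\Omega)$ equals $3\cdot 4^i G+(4^i+2)V_{60}+(2\cdot 4^i-2)V_{240}+o(4^i)$ with $G,V_{60},V_{240}$ independent of $i$ --- has a genuine gap, and it is exactly at the point you flag as ``the main obstacle.'' The contribution your partition assigns to a boundary piece is an integral over all pairs $(x,y)\in E\times E^c$ whose midpoint lies in that piece's cell, and this depends on the geometry of $E$ and $E^c$ arbitrarily far from the piece: two congruent pieces sit in non-congruent global environments (and the cells themselves differ, since they are determined by all the other pieces and vertices), so the per-piece contributions are not equal across pieces, let alone across levels $i$. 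To repair this you must truncate at a fixed radius $R$ and show the far-field contribution is $o(4^i)$ uniformly in $i$; here your stated tail bound is not strong enough. From $\int_{|z|\ge R}J\le C_JR^{-n}$ alone one gets only $\int_E\int_{E^c}J(x-y)\mathds{1}_{\{|x-y|\ge R\}}\le C|{\lambda_i\Omega}|R^{-n}\sim \lambda_i^{n}R^{-n}$, and for $n=2$, $\alpha=\log_34$, this is $o(4^i)=o(\lambda_i^{\alpha}t^{\alpha})$ only when $R\gg\lambda_i^{1-\alpha/2}\to\infty$; so interactions at unboundedly growing distances are not excluded from the leading order. Closing this requires replacing $|E|$ by the volume of the $r$-neighborhood of $\pa(\lambda_i\Omega)$, i.e.\ $|\Gamma_r|\lesssim r^{n-\alpha}\lambda_i^{\alpha}$, which is precisely the mechanism of Lemma~\ref{Lem: General Upper Bound}. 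Even after that, the two-type vertex classification is exact only within one piece-diameter of each vertex, so $V_{60},V_{240}$ must themselves be defined by a limiting truncation procedure.

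The paper's proof avoids all of this bookkeeping. It partitions $\Omega$ and $\Omega^c$ into twelve congruent sectors, shows the off-diagonal sector interactions are $O(t^{2n})$ by Lemma~\ref{Lem: General Upper Bound} (touching at most at a point), and uses the one-step self-similarity of a single sector pair to derive the approximate renormalization identity $4F(t)=3^{4}F(t/3)+R(t)$ with $|R(t)|\le C_JD\,t^{2n}$. Convergence of $(3^{2n-\alpha(3)})^iF(t/3^i)$ then follows from summability of $(3^{2n-\alpha(3)})^i|R(t/3^i)|\le C t^{2n}3^{-i\alpha(3)}$, with no need to identify per-piece or per-vertex constants at all. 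If you want to pursue your route, the decisive missing ingredient is the uniform far-field estimate via the boundary-neighborhood volume bound; but the functional-equation route reaches the same conclusion with a single application of the upper-bound lemma.
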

	\begin{proof}
		The positivity of the limit follows from Lemma~\ref{Lem: General Lower Bound}. 
		For the existence, we let $\Omega$ be the bounded domain in Figure~\ref{Fig: Snowflake} of Koch snowflake, and we let $\Omega^c$ be the unbounded part. We evenly partition $\Omega$ into $U_1, \ldots, U_{12}$, and evenly partition $\Omega^c$ into $V_1, \ldots, V_{12}$ as shown in Figure~\ref{Fig: Snowflake}.
		This division is chosen to facilitate the computation of the functional since the boundary part of $\pa U_1 \cap \pa V_1$ is a rescaling by $1/3$ of the boundary part of $\pa (U_{12} \cup U_1 \cup U_2 \cup U_3) \cap \pa (V_{12} \cup V_1 \cup V_2 \cup V_3)$. 
		Therefore, the functional can be expressed as
			\begin{equation}\label{Eqn: Snowflake Decomposition}
				\begin{split}
					\int_{\Omega \times \Omega^c} J \bigg( \frac{x-y}{t} \bigg) \ dxdy = \sum_{i=1} ^{12} \int_{U_i \times V_i} J \bigg( \frac{x-y}{t} \bigg) \ dxdy + \sum_{i \neq j} \int_{U_i \times V_j} J \bigg( \frac{x-y}{t} \bigg) \ dxdy.
				\end{split}
			\end{equation}
		Let us first see the second term in (\ref{Eqn: Snowflake Decomposition}). If $i \neq j$, we see that either $\dist(U_i, V_j) >0$, like $U_1$ and $V_3$, or $\pa U_i \cap \pa V_j$ is a single point, like $U_1$ and $V_2$.
		
		For the pair $(U_1, V_3)$, one can similarly define $\Gamma_t$ like we did for Lemma~\ref{Lem: General Upper Bound} in Section~\ref{Sec: Bounds}.
		$\Gamma_t = \emptyset$ when $t$ is small, so there is a $D_1(\Omega)>0$ such that $|\Gamma_t| < D_1 \cdot \min \{t^{2n} , 1{\}}$. So, Lemma~\ref{Lem: General Upper Bound} shows that this pair will give a term less than order $t^{2n}$.
		
		For the pair $(U_1, V_2)$ (or the pair $(U_1,V_{12})$), notice that $U_1$ is included in a cone $\widetilde{U}_1$ and $V_2$ is included in a cone $\widetilde{V}_2$. Also, $\widetilde{U}_1$ and $\widetilde{V}_2$ touch at their vertices.
		Again, one can similarly define $\Gamma_t$, and there is a $D_2(\Omega)>0$ such that $|\Gamma_t| < D_2 \cdot \min \{t^{2n} , 1{\}}$. So, Lemma~\ref{Lem: General Upper Bound} shows that this pair will give a term less than order $t^{2n}$.

		Hence, there is a $D(\Omega)>0$ such that the second term in (\ref{Eqn: Snowflake Decomposition}) is less than $C_J \cdot D\cdot t^{2n}$. 
		
		Next, let us consider the first term in (\ref{Eqn: Snowflake Decomposition}). Since all $\{U_i{\}}$'s (and all $\{V_i{\}}$'s) are the same up to rigid transformations, the first term in (\ref{Eqn: Snowflake Decomposition}) is 
			\begin{equation}
				12 \cdot \int_{U_1 \times V_1} J \bigg( \frac{x-y}{t} \bigg) \ dxdy,
			\end{equation}
		which we denote as $12 F(t)$, where $F(t)$ is this functional involving domains $U_1, V_1$. 
		Because $\pa U_1 \cap \pa V_1$ is a rescaling by $1/3$ of the boundary part of $\pa (U_{12} \cup U_1 \cup U_2 \cup U_3) \cap \pa (V_{12} \cup V_1 \cup V_2 \cup V_3)$, $\pa (3 U_1) \cap \pa (3 V_1) = \pa (U_{12} \cup U_1 \cup U_2 \cup U_3) \cap \pa (V_{12} \cup V_1 \cup V_2 \cup V_3)$.
		Let us consider the pair of domains $(3U_1, 3V_1)$, and the functional
			\begin{equation}
				\int_{3U_1 \times 3V_1} J \bigg( \frac{x-y}{t} \bigg) \ dxdy.
			\end{equation}
		Compare the pair of domains $(3U_1,3V_1)$ with the pair of domains $(U_{12} \cup U_1 \cup U_2 \cup U_3 , V_{12} \cup V_1 \cup V_2 \cup V_3)$.
		By using a similar method to estimate the second term in (\ref{Eqn: Snowflake Decomposition}), up to a term less than $C_J \cdot D \cdot t^{2n}$, the above functional of the pair $(3U_1,3V_1)$ equals to 
			\begin{equation}
				\int_{(U_{12} \cup U_1 \cup U_2 \cup U_3) \times (V_{12} \cup V_1 \cup V_2 \cup V_3)} J \bigg( \frac{x-y}{t} \bigg) \ dxdy,
			\end{equation}
		which is also equal to
			\begin{equation}
				4 \cdot \int_{U_1 \times V_1} J \bigg( \frac{x-y}{t} \bigg) \ dxdy
			\end{equation}
		up to another term less than $C_J \cdot D \cdot t^{2n}$.
		On the other hand, by change of variables,
			\begin{equation}
				\int_{3U_1 \times 3V_1} J \bigg( \frac{x-y}{t} \bigg) \ dxdy = 3^4 \cdot \int_{U_1 \times V_1} J \bigg( \frac{x-y}{(t/3)} \bigg) \ dxdy.
			\end{equation}
		So, $ 4F(t) = 3^4 F(t/3) + R(t)$ for an $R(t)$ such that $|R(t)| \leq C_J \cdot D \cdot t^{2n}$. Recall that in our case, $n = 2$ and $\alpha(3) = \log_3 (4)$, hence $3^4/4 = 3^{2n -\alpha(3)}$. Because $t>0$ is arbitrary, so for any $i \in \mb{N}$,
			\begin{equation}
				{(3^{2n- \alpha(3)})}^i F(t/3^i) = {(3^{2n- \alpha(3)})}^{i+1} F(t/3^{i+1}) + {(3^{2n- \alpha(3)})}^i \cdot R(t/3^i).
			\end{equation}
		We see that
			\begin{equation}
				\sum_{i=0} ^\infty  {(3^{2n- \alpha(3)})}^i \cdot |R(t/3^i)| \leq \sum_{i=0} ^\infty  {(3^{2n- \alpha(3)})}^i \cdot C_J \cdot D(\Omega) \cdot t^{2n} \cdot {(3^{2n})}^{-i} < \infty.
			\end{equation}
		So, the limit
			\begin{equation}
				\lim_{i \to \infty}{(3^{2n- \alpha(3)})}^i F(t/3^i)
			\end{equation}
		exists for any $t >0$.
	\end{proof}
		It is unknown whether the limit
			\begin{equation}
				\lim_{t \to 0^+} {\frac{1}{t^{2n- \alpha(3)}}} \int_{\Omega \times \Omega^c} J \bigg( \frac{x-y}{t} \bigg) \ dxdy 
			\end{equation}
		exists for the standard Koch snowflake. On the other hand, it is known that the lower and upper Minkowski contents of the standard Koch snowflake are not the same.

		However, for a snowflake of scale $\eta > 1$, if the logarithm ratio of the two factors $1/\eta$ and $(\eta-1)/(2\eta)$ is not a rational number, i.e., 
			\begin{equation}\label{Eqn: Non-Lattice Condition}
				\frac{\log[(2\eta) / (\eta-1)]}{\log(\eta)} = 1 + \frac{\log(2) - \log(\eta-1)}{\log(\eta)} \notin \mb{Q},
			\end{equation}
		then the limiting behaviors are actually better in some sense. Indeed, we have the following theorem. Also notice that any transcendental number satisfies the condition (\ref{Eqn: Non-Lattice Condition}).

	\begin{theorem}\label{Thm: Twistedflake limit}
		\textit{
		Assume that $J(z)$ for $z \in \mb{R}^n$ is a nonnegative function in $L^1(\mb{R}^n)$ such that 
			\begin{equation}
				\int_{\mb{R}^n} J(z) \cdot |z|^{n} \ dz \leq C_J < \infty,
			\end{equation}
		for a positive constant $ C_J$.
		Let $\Omega = \Omega(\eta)$ be the Koch snowflake of scale $\eta>1$. If $\eta$ satisfies (\ref{Eqn: Non-Lattice Condition}), then the limit 
			\begin{equation}
				\lim_{t \to 0^+} {\frac{1}{t^{2n- \alpha(\eta)}}} \int_{\Omega \times {\Omega}^c} J \bigg( \frac{x-y}{t} \bigg) \ dxdy 
			\end{equation}
		exists.
		}
	\end{theorem}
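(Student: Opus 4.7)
The plan is to exploit the self-similar structure of the Koch snowflake $\Omega(\eta)$ to derive a functional equation for $\mc{J}_t(\Omega)$ in $t$, then rewrite it as a renewal equation and invoke the key renewal theorem using the non-lattice hypothesis (\ref{Eqn: Non-Lattice Condition}).

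Following the decomposition strategy in the proof of Theorem~\ref{Thm: Snowflake Limit}, I would first isolate the contribution from a single side of the original equilateral triangle. Construct ``wedge'' domains $U \subset \Omega$ and $V \subset \Omega^c$ so that $\pa U \cap \pa V$ is a Koch curve of scale $\eta$ sitting above that side, and set
\begin{equation}
F(t) = \int_{U \times V} J\bigl((x-y)/t\bigr) \, dxdy.
\end{equation}
The self-similarity of the Koch curve of scale $\eta$ gives a partition $U = U_1 \cup \cdots \cup U_4$ and $V = V_1 \cup \cdots \cup V_4$, in which each pair $(U_i, V_i)$ is an isometric image of a rescaled copy of $(U, V)$ with contraction ratios $r_1 = r_2 = (\eta-1)/(2\eta)$ (the two sides of the added bump) and $r_3 = r_4 = 1/\eta$ (the two outer end-segments). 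The off-diagonal cross-terms $\int_{U_i \times V_j}$ with $i \neq j$ involve pairs of regions that are either positively separated or touch only at a single vertex, so Lemma~\ref{Lem: General Upper Bound} (applied with $\alpha = 0$) bounds each of them by $C \, t^{2n}$. Combining this with the substitution $x = r_i x' + c_i$, $y = r_i y' + c_i$ on each diagonal block $(U_i, V_i)$ yields the functional equation
\begin{equation}
F(t) = \sum_{i=1}^4 r_i^{2n} \, F(t/r_i) + R(t), \qquad |R(t)| \leq C_{\Omega,J} \, t^{2n},
\end{equation}
valid for all sufficiently small $t > 0$.

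Now set $G(t) = F(t) \, t^{-(2n - \alpha(\eta))}$. Because $\alpha(\eta)$ is defined by $\sum_{i=1}^4 r_i^{\alpha(\eta)} = 1$ according to (\ref{Eqn: Dimension of Twistedflake}), the equation becomes $G(t) = \sum_{i=1}^4 r_i^{\alpha(\eta)} \, G(t/r_i) + \rho(t)$, with $|\rho(t)| \leq C \, t^{\alpha(\eta)}$. Under the change of variables $u = \log(1/t)$, $h(u) = G(e^{-u})$, $\tilde\rho(u) = \rho(e^{-u})$, this is the renewal equation
\begin{equation}
h(u) = \sum_{i=1}^4 p_i \, h(u - \ell_i) + \tilde\rho(u),
\end{equation}
where the weights $p_i = r_i^{\alpha(\eta)}$ sum to $1$, the positive jumps $\ell_i = -\log r_i$ take the two distinct values $\log\eta$ and $\log\bigl(2\eta/(\eta-1)\bigr)$, and $|\tilde\rho(u)| \leq C \, e^{-\alpha(\eta) u}$, so $\tilde\rho$ is directly Riemann integrable on $[0,\infty)$. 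From Corollary~\ref{Cor: Main Upper and Lower Limits}, applied to $(U,V)$ using that the snowflake is an NTA domain, $h$ is also bounded.

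The hypothesis (\ref{Eqn: Non-Lattice Condition}) is precisely the statement that $\log\bigl(2\eta/(\eta-1)\bigr) / \log \eta$ is irrational, so the jump distribution $\sum p_i \delta_{\ell_i}$ is non-arithmetic. Iterating the renewal equation expresses $h$ as the convolution $\tilde\rho * U$ with the renewal measure $U = \sum_{k \geq 0} \nu^{*k}$, where $\nu = \sum p_i \delta_{\ell_i}$; Blackwell's renewal theorem together with the key renewal theorem then give
\begin{equation}
h(u) \longrightarrow \frac{1}{\mu} \int_0^\infty \tilde\rho(s) \, ds \qquad (u \to \infty),
\end{equation}
with $\mu = \sum p_i \ell_i$. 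Translating back, $\lim_{t \to 0^+} F(t) / t^{2n - \alpha(\eta)}$ exists, and since $\mc{J}_t(\Omega) = 3 F(t) + O(t^{2n})$ (the three sides contribute congruently, and cross-contributions between distinct sides meet only at vertices of the triangle and are again handled by Lemma~\ref{Lem: General Upper Bound}), the claimed limit exists. The main obstacle I would expect is the careful geometric construction of the wedge domains $(U,V)$ so that the four sub-pieces $(U_i, V_i)$ are exact isometric rescalings of $(U,V)$ and all cross-terms fit into the framework of Lemma~\ref{Lem: General Upper Bound}; once that functional equation is rigorously in place, the reduction to renewal theory and application of the key renewal theorem is essentially standard.
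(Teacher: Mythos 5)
Your proposal is correct and follows essentially the same route as the paper's proof: the same four-piece self-similar decomposition with contraction ratios $(\eta-1)/(2\eta)$ (twice) and $1/\eta$ (twice), the same functional equation $F(t)=\sum_i r_i^{2n}F(t/r_i)+R(t)$ with $|R(t)|\leq C t^{2n}$ controlled via Lemma~\ref{Lem: General Upper Bound}, and the same logarithmic change of variables turning this into a renewal equation resolved by the key renewal theorem under the non-lattice condition (\ref{Eqn: Non-Lattice Condition}). The only differences are expository --- you make explicit the direct Riemann integrability of the error term and the boundedness of $h$, which the paper delegates to its renewal-theorem references.
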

	\begin{proof}
		Let us use Figure~\ref{Fig: Twistedflake} to illustrate the proofs, which follow the same strategy as the proof for Theorem~\ref{Thm: Snowflake Limit}. 
		We take domains $U_1,U_2,U_3,U_4$ and $V_1,V_2,V_3,V_4$ as shown in Figure\ref{Fig: Twistedflake}.
		And we only need to prove the existence for the limit
			\begin{equation}
				\lim_{t \to 0^+} {\frac{1}{t^{2n- \alpha(\eta)}}} \int_{(U_1 \cup U_2 \cup U_3 \cup U_4) \times (V_1 \cup V_2 \cup V_3 \cup V_4)} J \bigg( \frac{x-y}{t} \bigg) \ dxdy .
			\end{equation}
		If we let 
			\begin{equation}
				F(t) \equiv \int_{(U_1 \cup U_2 \cup U_3 \cup U_4) \times (V_1 \cup V_2 \cup V_3 \cup V_4)} J \bigg( \frac{x-y}{t} \bigg) \ dxdy,
			\end{equation}
		then, use the self-similarity as we obtained the dimension $\alpha(\eta)$ in (\ref{Eqn: Dimension of Twistedflake}) again, we will get that for all $t>0$,
			\begin{equation}
				2 \bigg[F\bigg(\frac{t }{(1-(1/\eta))/2}\bigg) \cdot {\bigg(\frac{1-(1/\eta)}{2}\bigg)}^{2n} + F\bigg(\frac{t}{(1/\eta)}\bigg) \cdot {\bigg(\frac{1}{\eta}\bigg)}^{2n}\bigg] = F(t) + R(t),
			\end{equation}
		where there is a constant $D = D(\Omega(\eta))>0$ such that $|R(t)| \leq C_J \cdot D \cdot t^{2n}$.

		Define $G(s) \equiv e^{(2n -\alpha(\eta))s} \cdot F(e^{-s})$, and let $r_1 = (1-(1/\eta))/2$, $r_2 = 1/\eta$, we see that 
			\begin{equation}
				2 \bigg[G\big(s + \log(r_1) \big) \cdot {r_1}^{\alpha(\eta)} + G\big(s + \log(r_2) \big) \cdot {r_2}^{\alpha(\eta)}   \bigg] = G(s) + e^{(2n-\alpha(\eta))s}R(e^{-s}).
			\end{equation}
		Let $\mu$ be the probability distribution function which assigns weight $2 r_i ^{\alpha(\eta)}$ at the point $-\log(r_i)$ for $i=1,2$. Then, the function $G(s)$ satisfies the renewal equation 
			\begin{equation}
				G(s) = - e^{(2n-\alpha(\eta))s}R(e^{-s}) + \int_{0} ^s G(s- s') \ d\mu(s').
			\end{equation}
		Because we assumed that $\log(r_1) / \log(r_2) \notin \mb{Q}$, we can apply the renewal theorem, as was done in~~\cite{G00} for the limits of Minkowski contents. For general statements of the renewal theorem, see Chapter XI of~~\cite{F71}.
		Therefore, we can show that
			\begin{equation}
				\lim_{s \to + \infty} G(s) = \frac{1}{2r_1^{\alpha(\eta)} \log(r_1) +2r_2^{\alpha(\eta)} \log(r_2) } \cdot \int_{0} ^{+\infty}  e^{(2n-\alpha(\eta))s}R(e^{-s}) \ ds .
			\end{equation}
		This completes the proof of Theorem~\ref{Thm: Twistedflake limit}.
	\end{proof}

	\begin{remark}
		As shown in Lemma~\ref{Lem: General Upper Bound} and Lemma~\ref{Lem: General Lower Bound}, both limits in Theorem~\ref{Thm: Snowflake Limit} and Theorem~\ref{Thm: Twistedflake limit} are comparable, i.e., up to a constant depending on the dimension, the kernel $J$, and the noncollapsing parameter $D_{\pa \Omega}$, to the Minkowski contents of the corresponding $\pa \Omega$'s. And in these specific Koch snowflakes, $D_{\Omega}$ can also be chosen explicitly depending on the scale $\eta$.
	\end{remark}


\section{Applications in Fluctuations of Determinantal Processes}\label{Sec: Det Process}

	For simplicity, let us consider random point processes on $(\mb{R}^n, \mu)$, where $\mu$ is a probability measure on $\mb{R}^n$. For more rigorous definitions, see Section 4.2 of~~\cite{AGZ10}. 
	We let $\mf{M}$ be the space of $\sigma$-finite Radon measures on $\mb{R}^n$, and let $\mf{M}_+$ be the subset of $\mf{M}$ consisting of positive measures.
	\begin{definition}\label{Def: Point Process}
		A random point process on $\mb{R}^n$ is a random, integer-valued $X \in \mf{M}_+$.
	\end{definition}
	By random, we mean that for any Borel set $A \subset \mb{R}^n$, $X(A)$ is an integer-valued random variable.
	
	\begin{definition}\label{Def: Cor Funct}
		Assume that there are locally nonnegative integrable symmetric functions $\rho_k$ on $ \mb{R}^{n \times k}$ for all $k \geq 1$, such that for any measurable subset $A$ of $\mb{R}^n$,
			\begin{equation}
				k! \cdot \mc{E} \bigg[ \binom{X(A)}{k}   \bigg]  = \int_{A^{\otimes k} } \rho_k(x_1,\ldots, x_k)  \ d\mu(x_1) \ldots d\mu(x_k).
			\end{equation}
		Here, $\mc{E}(\cdot)$ denotes expectations. And we say that $\rho_k$ is symmetric if
			\begin{equation}
				\rho_k(x_1, \ldots, x_k) = \rho_k(x_{\sigma(1)}, \ldots, x_{\sigma(k)}),
			\end{equation}
		for all $\sigma \in S_k$, the $k$-th symmetry group. And we call the function $\rho_k$ $k$-th joint intensity or $k$-th correlation function of the point process $X$ with respect to $(\mb{R}^n , |\cdot|)$. Here, $|\cdot|$ denotes the Lebesgue measure on $\mb{R}^n$.
	\end{definition}

	\begin{definition}\label{Def: Det Process}
		$X$ is called a determinantal point process if there is a kernel function $K$ on $\mb{R}^n \times \mb{R}^n$ such that for all $k \geq 1$,
			\begin{equation}
				\rho_k(x_1, \ldots, x_k) = \det {\big( K(x_i,x_j) \big)}_{1 \leq i,j \leq k} .
			\end{equation}
	\end{definition}
	
	\begin{example}\label{Example: Ginibre}
		Let $N \in \mb{N}$. For the \textit{complex Ginibre ensemble} of dimension $N$, we mean $N \times N$ random matrices over $\mb{C}$, with entries being i.i.d.~complex Gaussian random variables with mean $0$ and variance $1/N$. 
		Let $(\lambda_1 ^N, \ldots, \lambda_N ^N)$ be the eigenvalues of the Ginibre ensemble of dimension $N$. Then, the law for those eigenvalues is 
			\begin{equation}
				p(z_1, \ldots, z_N) = \frac{1}{Z_N} \cdot \prod_{1 \leq i < j \leq N } {|z_i - z_j|}^2 \cdot  e^{- N \sum_{i=1} ^N  {|z_i|}^2 } \ d\mc{H}^2(z_1) \ldots d\mc{H}^2(z_N).
			\end{equation}
		Here, each $z_i$ is in $\mb{C}$, and $\mc{H}^2$ is the $2$-dimensional Hausdorff measure on $\mb{C}$, which is exactly the Lebesgue measure on $\mb{C} = \mb{R}^2$. $Z_N$ is a constant such that the integral of $p$ is $1$.
		Then, if we let $X_N = \sum_{i=1} ^N \delta_{z_i} $, where $\delta_z$ is the Dirac measure at $z$, then $X_N$ is a determinantal process, such that the kernel function is
			\begin{equation}
				K_N(z,w) = \sum_{m = 0} ^{N-1} \frac{N^m}{m !} {(z \overline{w})}^m, \quad z,w \in \mb{C},
			\end{equation}
		with respect to the measure $d\mu_N (z) = \frac{N}{\pi} e^{-N {|z|}^2} d\mc{H}^2(z)$. Notice that $K_N(w,z) = \overline{K_N(z,w)}$. For more properties of the Ginibre ensemble, one may see~~\cite{RV07-2}.
	\end{example}

	\subsection{Asymptotic behaviors of eigenvalues of the Ginibre ensemble}

	\indent

	We will let $N \to \infty$ and study the behavior of $X_N$, which is the sum of Dirac measures on the eigenvalues of the Ginibre ensemble as defined in Example~\ref{Example: Ginibre}.
	First, we have the following lemma for the kernel function $K_N$ in Example~\ref{Example: Ginibre}.

	\begin{lemma}\label{Lem: Exponential Remainder}
		\textit{
		For any $\lambda \in (0,1)$, there is a positive constant $C(\lambda)$ and a constant $\delta(\lambda) \in (0,1)$, such that for any $z,w \in \mb{C}$ with $|z|, |w| < \lambda$, 
			\begin{equation}
				\big| e^{N(z\overline{w})} - K_N(z,w) \big|  \leq C\cdot \frac{e^{(N\cdot |z\overline{w}|)}}{\sqrt{N}} \cdot \delta^N .
			\end{equation}
		}
	\end{lemma}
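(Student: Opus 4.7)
The plan is to recognize the difference as the tail of the Taylor series of the exponential. Since $K_N(z,w)=\sum_{m=0}^{N-1}(Nz\overline{w})^m/m!$ is exactly the $(N{-}1)$-st partial sum of $e^{N z\overline{w}}$, we have
$$e^{N(z\overline{w})}-K_N(z,w)=\sum_{m=N}^{\infty}\frac{(Nz\overline{w})^m}{m!}.$$
Taking absolute values inside the sum and setting $v:=|z\overline{w}|\leq\lambda^2<1$, the problem reduces to proving the real tail bound $\sum_{m\geq N}(Nv)^m/m!\leq C(\lambda)\,e^{Nv}\delta(\lambda)^N/\sqrt{N}$, uniformly in $v\in[0,\lambda^2]$ and $N\in\mathbb{N}$.

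Next I would extract the $m=N$ term and bound the remaining series by a geometric one, using the elementary inequality $(N+k)!/N!\geq N^k$:
$$\sum_{m=N}^{\infty}\frac{(Nv)^m}{m!}=\frac{(Nv)^N}{N!}\sum_{k=0}^{\infty}\frac{(Nv)^k\,N!}{(N+k)!}\leq \frac{(Nv)^N}{N!}\cdot\frac{1}{1-v}.$$
Applying Robbins' form of Stirling, $N!\geq\sqrt{2\pi N}(N/e)^N$, gives $(Nv)^N/N!\leq (ve)^N/\sqrt{2\pi N}$. Writing $(ve)^N=(v e^{1-v})^N\cdot e^{Nv}$ to match the prefactor appearing in the claim, we obtain
$$\sum_{m=N}^{\infty}\frac{(Nv)^m}{m!}\leq \frac{(v e^{1-v})^N\cdot e^{Nv}}{(1-v)\sqrt{2\pi N}}.$$

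To finish, I would use the fact that $g(v)=v e^{1-v}$ is strictly increasing on $[0,1]$ with $g(1)=1$, so $g(v)\leq g(\lambda^2)<1$ for every $v\in[0,\lambda^2]$. Setting $\delta(\lambda):=\lambda^2 e^{1-\lambda^2}\in(0,1)$ and $C(\lambda):=1/[(1-\lambda^2)\sqrt{2\pi}]$ then delivers the stated estimate, since $e^{Nv}=e^{N|z\overline{w}|}$. There is no genuine obstacle here: this is a quantified form of the fact that the Taylor remainder of $e^x$ beyond term $N$ decays exponentially whenever $|x|/N$ is bounded strictly below $1$. The only items requiring a moment's care are ensuring that $\delta$ and $C$ depend only on $\lambda$ (not on $z,w$ individually) and verifying the monotonicity of $g$ on $[0,1]$, both of which are immediate.
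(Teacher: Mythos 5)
Your proof is correct and follows essentially the same route as the paper's: write the difference as the Taylor tail $\sum_{m\ge N}(N z\overline{w})^m/m!$, factor out the $m=N$ term and bound the rest by a geometric series, apply Stirling's lower bound for $N!$, and absorb $(ve)^N$ into $e^{Nv}\,(v e^{1-v})^N$ using the monotonicity of $v\mapsto v e^{1-v}$ on $[0,1]$. The only (immaterial) difference is that you work with $v=|z\overline{w}|\le\lambda^2$ and hence get the slightly sharper $\delta(\lambda)=\lambda^2 e^{1-\lambda^2}$, whereas the paper only uses $v<\lambda$ and obtains $\delta(\lambda)=\lambda e^{1-\lambda}$.
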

	\begin{proof}
		For any fixed $N$ and $s \in (0,\lambda)$,
			\begin{equation}
			    \begin{split}
			        & \quad e^{Ns} - \sum_{m = 0} ^{N-1} \frac{N^m}{m !} {s}^m = \sum_{m \geq N} \frac{N^m}{m !} {s}^m 
        		\\	&= \frac{ {( Ns )}^{N} }{N !} \cdot \bigg( 1+\sum_{m \geq 1} \frac{{( Ns )}^{m}}{(N+1) \cdots (N+m)} \bigg) 
				\leq C(\lambda)\cdot  \frac{ {( Ns )}^{N} }{N !}  ,
			    \end{split}
			\end{equation}	
            	where the last ineqaulity is because $N \leq N+i$ for $i\geq 0$ and $\sum_{m\geq 1} s^m \leq C(\lambda)$.
		Notice that Stirling's formula showed that there is another dimensional constant $C>0$, such that $k ! \geq C \sqrt{k} {(k/e)}^k$ for any $k \in \mb{Z}_+$. 
		Hence,
			\begin{equation}
				\frac{ {( Ns )}^{N} }{N !} \leq \frac{{(se)}^N}{C \sqrt{N}} .
			\end{equation}
		Notice that the function for $s$, $s e^{1-s}$, is increasing in $(0,1)$. So, $se \leq \lambda e^{1+s - \lambda}$. What's more, $\lambda e^{1-\lambda} < 1 e^{1-1} = 1$ since $\lambda \in (0,1)$. We may then denote the constant $\lambda e^{1-\lambda}$ as $\delta(\lambda) <1$. Then, $se \leq \delta(\lambda) e^s$. So, 
			\begin{equation}
				\bigg| e^{Ns} - \sum_{m = 0} ^N \frac{N^m}{m !} {s}^m  \bigg| \leq \frac{e^{Ns}}{C\sqrt{N}} \cdot \delta^N.
			\end{equation}

	\end{proof}

        We first identify $\mb{C}$ with $\mb{R}^2$. Then, let us consider a function $\varphi \in L^1(\mb{R}^2) $ with the closure of its support $\overline{\mr{supp}(\varphi)} \subset \big\{x \in \mb{R}^2 \ \big| \ |x| < 1 \big{\}}$. Then, 
        	\begin{equation}
			X_N(\varphi) \equiv \sum_{i=1} ^N \varphi(z_i)
		\end{equation}
        is a random variable and we will study its asymptotic behavior as $N \to \infty$. 
	A typical choice for $\varphi$ is to take $\varphi = \mathds{1}_{\Omega}$ for some domain $\Omega$ with closure contained in the unit ball of $\mb{R}^2$. In the following, we assume that there exists a $\lambda \in (0,1)$ such that $\overline{\mr{supp}(\varphi)} \subset \big\{x \in \mb{R}^2 \ \big| \ |x| < \lambda \big{\}}$, and let $\delta(\lambda) \in (0,1)$ be the constant we obtained in Lemma~\ref{Lem: Exponential Remainder}.

        \begin{lemma}
		\textit{
		There is a dimensional constant $C>0$, such that
			\begin{equation}
					\bigg| \mc{E}\big(X_N(\varphi) \big) - \frac{N}{\pi} \int \varphi(x) \ dx\bigg| \leq  C \cdot (\sqrt{N}  \delta^N )\cdot ||\varphi||_{L^1(\mb{R}^2)}.
			\end{equation}
		}
        \end{lemma}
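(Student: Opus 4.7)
The plan is to reduce the expectation $\mathcal{E}(X_N(\varphi))$ to a single integral against the one-point correlation function, and then invoke Lemma~\ref{Lem: Exponential Remainder} (with $z=w$) to compare that correlation function with the constant density $N/\pi$ on the support of $\varphi$.

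First, I would apply Definition~\ref{Def: Cor Funct} with $k=1$ together with Definition~\ref{Def: Det Process} to write
\begin{equation*}
    \mathcal{E}(X_N(\varphi)) = \int_{\mathbb{R}^2} \varphi(z) \, \rho_1(z) \, d\mu_N(z) = \int_{\mathbb{R}^2} \varphi(z) \, K_N(z,z) \cdot \frac{N}{\pi} e^{-N|z|^2} \, d\mathcal{H}^2(z).
\end{equation*}
(This is the standard identity for determinantal processes; one justifies it either by taking $\varphi = \mathds{1}_A$ and $k=1$ and then extending by linearity and density to general $\varphi \in L^1$, noting that $\overline{\mathrm{supp}(\varphi)} \subset \{|x|<\lambda\}$ keeps everything integrable.)

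The second step is to rewrite the target quantity in a parallel form. Since $e^{N|z|^2}\cdot e^{-N|z|^2} = 1$, we have
\begin{equation*}
    \frac{N}{\pi} \int_{\mathbb{R}^2} \varphi(z) \, d\mathcal{H}^2(z) = \int_{\mathbb{R}^2} \varphi(z) \, e^{N|z|^2} \cdot \frac{N}{\pi} e^{-N|z|^2} \, d\mathcal{H}^2(z).
\end{equation*}
Subtracting, the difference becomes
\begin{equation*}
    \mathcal{E}(X_N(\varphi)) - \frac{N}{\pi}\int \varphi \, dx = \frac{N}{\pi} \int_{\mathbb{R}^2} \varphi(z) \bigl( K_N(z,z) - e^{N|z|^2} \bigr) e^{-N|z|^2} \, d\mathcal{H}^2(z).
\end{equation*}

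For the third step, I would bound the integrand pointwise on $\overline{\mathrm{supp}(\varphi)} \subset \{|z|<\lambda\}$ using Lemma~\ref{Lem: Exponential Remainder} specialized to $w=z$: there are $C(\lambda)>0$ and $\delta(\lambda)\in(0,1)$ with
\begin{equation*}
    \bigl| K_N(z,z) - e^{N|z|^2} \bigr| \leq \frac{C}{\sqrt{N}} \, e^{N|z|^2} \, \delta^N.
\end{equation*}
The factor $e^{N|z|^2}$ cancels $e^{-N|z|^2}$, leaving
\begin{equation*}
    \left| \mathcal{E}(X_N(\varphi)) - \frac{N}{\pi} \int \varphi \, dx \right| \leq \frac{N}{\pi} \cdot \frac{C\delta^N}{\sqrt{N}} \int |\varphi(z)| \, d\mathcal{H}^2(z) = \frac{C}{\pi} \cdot \sqrt{N}\,\delta^N \cdot \|\varphi\|_{L^1(\mathbb{R}^2)},
\end{equation*}
which is the desired estimate after absorbing $1/\pi$ into the constant.

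There is no real obstacle here: once one has Lemma~\ref{Lem: Exponential Remainder} at hand, the argument is essentially a change of writing. The only small point of care is the first step, making sure that $\mathcal{E}(X_N(\varphi)) = \int \varphi \cdot K_N(z,z) \, d\mu_N$ for an $L^1$ test function $\varphi$ rather than just an indicator; but since $\overline{\mathrm{supp}(\varphi)}$ is compactly contained in $\{|z|<\lambda\}$, on which $K_N(z,z)e^{-N|z|^2}$ is uniformly bounded in $N$, this extension from indicators is straightforward (dominated convergence for simple function approximations).
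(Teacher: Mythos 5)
Your proposal is correct and follows essentially the same route as the paper: write $\mathcal{E}(X_N(\varphi))$ via the one-point correlation $K_N(z,z)\,d\mu_N$, subtract $\frac{N}{\pi}\int\varphi$ in the same form, and apply Lemma~\ref{Lem: Exponential Remainder} on the diagonal so that $e^{N|z|^2}$ cancels $e^{-N|z|^2}$. The extra care you take in justifying the $k=1$ correlation identity for general $L^1$ test functions is a reasonable addition the paper leaves implicit.
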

        \begin{proof}
            Because $X_N$ is a determinantal process, the definition and Lemma~\ref{Lem: Exponential Remainder} can give us that 
                \begin{equation}
                    \begin{split}
                        & \quad \bigg|  \mc{E}\big(X_N(\varphi) \big) - \frac{N}{\pi} \int \varphi(x) \ dx\bigg| = \bigg| \frac{N}{\pi}\int_{\mb{R}^2} \varphi(x) \cdot ( K_N(x,x) - e^{N {|x|}^2}) e^{-N {|x|}^2} \ dx \bigg|
                    \\  &\leq C \cdot N \int_{\mb{R}^2} |\varphi| \cdot  \frac{\delta^N}{\sqrt{N}} \ dx
                        = C \cdot (\sqrt{N}  \delta^N )\cdot ||\varphi||_{L^1(\mb{R}^2)}.
                    \end{split}
                \end{equation}
        \end{proof}

	Next, we need the formula for the variance of $X_N(\varphi)$. We cite the equation (28) in~~\cite{RV07}. For $\varphi \in L^2(\mb{R}^2)$,
		\begin{equation}\label{Eqn: Variance General L^2 Functions}
			\mr{Var}(X_N(\varphi)) = \frac{1}{2}\int_{\mb{R}^2 \times \mb{R}^2} { \big|\varphi(x) - \varphi(y) \big| }^2 \cdot {|K_N(x,y)|}^2   \ d\mu_N(x)  d\mu_N(y).
		\end{equation}
	In particular, when $\varphi = \mathds{1}_{\Omega}$,
		\begin{equation}\label{Eqn: Variance Single Set}
			\mr{Var}(X_N(\mathds{1}_{\Omega})) = \int_{\Omega \times \Omega^c} {|K_N(x,y)|}^2   \ d\mu_N(x)  d\mu_N(y).
		\end{equation}

	Recall that we assume that there exists $\lambda \in (0,1)$ such that $\overline{\mr{supp}(\varphi)} \subset \big\{x \in \mb{R}^2 \ \big| \ |x| < \lambda \big{\}}$, and we can get the constant $\delta(\lambda) \in (0,1)$ in Lemma~\ref{Lem: Exponential Remainder}.  Let us choose a $\lambda_+ \in (\lambda , 1)$.
	We see that since $\varphi = 0$ outside of $B(0,\lambda)$, we have that
		\begin{equation}
			\begin{split}
				\mr{Var}(X_N(\varphi)) &= \frac{1}{2}\int_{\mb{R}^2 \times \mb{R}^2} { \big|\varphi(x) - \varphi(y) \big| }^2 \cdot {|K_N(x,y)|}^2   \ d\mu_N(x)  d\mu_N(y)
			\\	&= \frac{1}{2}\int_{B(0,\lambda_+)\times B(0,\lambda_+)} { \big|\varphi(x) - \varphi(y) \big| }^2 \cdot {|K_N(x,y)|}^2   \ d\mu_N(x)  d\mu_N(y)
			\\	& \quad + \int_{B(0,\lambda_+)\times {(B(0,\lambda_+))}^c } { \big|\varphi(x) \big| }^2 \cdot {|K_N(x,y)|}^2   \ d\mu_N(x)  d\mu_N(y) .
			\end{split}
		\end{equation}
	Let us compute the second term in the above formula. We use the information of Ginibre ensemble and we see that since $\varphi(x) = 0$ outside of $B(0,\lambda)$,
		\begin{equation}
			\begin{split}
				& \quad \int_{B(0,\lambda_+)\times {(B(0,\lambda_+))}^c } { \big|\varphi(x) \big| }^2 \cdot {|K_N(x,y)|}^2   \ d\mu_N(x)  d\mu_N(y)
			\\	&= \int_{B(0,\lambda)\times {(B(0,\lambda_+))}^c } { \big|\varphi(x) \big| }^2 \cdot {|K_N(x,y)|}^2   \ d\mu_N(x)  d\mu_N(y)
			\\	&\leq {\bigg( \frac{N}{\pi} \bigg)}^2 \int_{B(0,\lambda)\times {(B(0,\lambda_+))}^c } { \big|\varphi(x) \big| }^2 \cdot {e^{2N |x| |y| }} \cdot (e^{-N{|x|}^2} e^{-N{|y|}^2})  \ dx  dy
			\\	&\leq {\bigg( \frac{N}{\pi} \bigg)}^2 \int_{B(0,\lambda)\times {(B(0,\lambda_+))}^c } { \big|\varphi(x) \big| }^2 \cdot e^{-N{(|y|- \lambda)}^2}  \ dx  dy
			\\	&= {\bigg( \frac{N}{\pi} \bigg)}^2 \cdot {||\varphi||}^2 _{L^2(\mb{R}^2)} \cdot (2\pi) \cdot \int_{\lambda_+ - \lambda} ^{\infty} e^{-Nr^2}r \ dr
			\\	&= {\bigg( \frac{N}{\pi} \bigg)} \cdot {||\varphi||}^2 _{L^2(\mb{R}^2)} \cdot e^{-N {(\lambda_+ - \lambda)}^2}.
			\end{split}
		\end{equation}
	Hence, the second term is an exponentially small term. For the first term, we need to use Lemma~\ref{Lem: Exponential Remainder} to replace ${|K_N(x,y)|}^2$ with ${|e^{N (x \overline{y})}|}^2$. 
	By Lemma~\ref{Lem: Exponential Remainder}, we see that when $x,y \in B(0,\lambda_+)$, for suitably large $N$ depending on $\lambda_+$,
		\begin{equation}
			\bigg| {|K_N(x,y)|}^2 -  {|e^{N (x \overline{y})}|}^2\bigg| \leq C(\lambda_+) \cdot \frac{e^{(N\cdot |x||y|)}}{\sqrt{N}} \cdot {(\delta(\lambda_+))}^N \cdot 3 e^{(N\cdot |x||y|)} .
		\end{equation}
	So, recall that $d\mu_N(x) = \frac{N}{\pi} e^{-N {|x|}^2} \ dx$, we know that
		\begin{equation}
			\begin{split}
				& \quad \int_{B(0,\lambda_+)\times B(0,\lambda_+)} { \big|\varphi(x) - \varphi(y) \big| }^2 \cdot \bigg| {|K_N(x,y)|}^2 -  {|e^{N (x \overline{y})}|}^2\bigg|   \ d\mu_N(x)  d\mu_N(y)
			\\	&\leq \frac{6}{\pi^2} C(\lambda_+) \cdot  {(\delta(\lambda_+))}^N \cdot N^{\frac{3}{2}} \cdot {||\varphi||}^2 _{L^2(\mb{R}^2)} \cdot |B(0,\lambda_+)|,
			\end{split}
		\end{equation}
	which is also an exponentially small term. Hence, up to a term $R_N$ which is exponentially small as $N \to +\infty$,
		\begin{equation}\label{Eqn: Asymptotic Ginibre Variance}
			\begin{split}
				\mr{Var}(X_N(\varphi))  &=  \frac{1}{2}\int_{B(0,\lambda_+)\times B(0,\lambda_+)} { \big|\varphi(x) - \varphi(y) \big| }^2 \cdot {|e^{N (x \overline{y})}|}^2   \ d\mu_N(x)  d\mu_N(y) + R_N
			\\	&= \frac{N^2}{2 \pi^2}\int_{B(0,\lambda_+)\times B(0,\lambda_+)} { \big|\varphi(x) - \varphi(y) \big| }^2 \cdot e^{-N {|x-y|}^2}   \ dx  dy + R_N.
			\end{split}
		\end{equation}
	Therefore, we can directly obtain the following corollary by applying Theorem~\ref{Thm: W^{1,p} Limits}.
	\begin{corollary}\label{Cor: Ginibre Sobolev}
		\textit{
		There is a dimensional constant $T_1>0$, such that for any $\lambda \in (0,1)$, and any $\varphi \in W_0 ^{1,2}(B(0,\lambda))$, we have that 
			\begin{equation}
				\lim_{N \to \infty} \mr{Var}(X_N(\varphi)) = T_1 \int_{\mb{R}^2} {|\nabla \varphi|}^2 \ dx.
			\end{equation}
		}
	\end{corollary}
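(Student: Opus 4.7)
The plan is to reduce the claim to Theorem~\ref{Thm: W^{1,p} Limits} of Bourgain-Brezis-Mironescu with $n=p=2$, applied to a Gaussian kernel, combined with the asymptotic variance identity (\ref{Eqn: Asymptotic Ginibre Variance}) just established in the preceding computation. The key observation is that $e^{-N|x-y|^2} = J_0((x-y)/t)$ with $J_0(z) = e^{-|z|^2}$ and the scaling $t = N^{-1/2}$, which identifies the Ginibre variance (in the bulk) with a BBM-type double integral at scale $t \to 0^+$.

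Concretely, I would first fix $\lambda_+ \in (\lambda, 1)$ as in the derivation of (\ref{Eqn: Asymptotic Ginibre Variance}) and extend $\varphi$ by zero to $B(0,\lambda_+)$; since $\varphi \in W_0^{1,2}(B(0,\lambda))$, this extension lies in $W^{1,2}(B(0,\lambda_+))$ with $[\varphi]_{W^{1,2}(B(0,\lambda_+))} = ||\nabla \varphi||_{L^2(\mb{R}^2)}$. To match the normalization $\int J(z)|z|^2\,dz = 1$ required by Theorem~\ref{Thm: W^{1,p} Limits}, take $J = \pi^{-1} J_0$, noting that $\int_{\mb{R}^2} e^{-|z|^2}|z|^2\,dz = \pi$. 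With $t^{n+p} = t^4 = N^{-2}$, Theorem~\ref{Thm: W^{1,p} Limits} applied on $A = B(0,\lambda_+)$ then gives
\begin{equation*}
\lim_{N \to \infty} N^2 \int_{B(0,\lambda_+) \times B(0,\lambda_+)} |\varphi(x) - \varphi(y)|^2\, e^{-N|x-y|^2}\,dx\,dy = \pi\, K(2,2) \int_{\mb{R}^2} |\nabla \varphi|^2\,dx.
\end{equation*}
Substituting into (\ref{Eqn: Asymptotic Ginibre Variance}) and using the exponential decay of the remainder $R_N$ yields
\begin{equation*}
\lim_{N \to \infty} \mr{Var}(X_N(\varphi)) = \frac{K(2,2)}{2\pi} \int_{\mb{R}^2} |\nabla \varphi|^2\,dx,
\end{equation*}
so one may take $T_1 = K(2,2)/(2\pi)$.

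The steps requiring the most care have essentially been carried out already in the derivation of (\ref{Eqn: Asymptotic Ginibre Variance}): the replacement of $|K_N(x,y)|^2$ by $e^{-N|x-y|^2}$ via Lemma~\ref{Lem: Exponential Remainder} and the treatment of the tail on $B(0,\lambda_+)^c$ as exponentially small. After that, the only remaining subtlety is confirming that enlarging the domain of integration from the support of $\varphi$ to $B(0,\lambda_+)$ does not alter the BBM limit, which holds because the zero-extension of a $W_0^{1,2}$ function has Sobolev seminorm on the larger domain equal to $||\nabla \varphi||_{L^2(\mb{R}^2)}$ with no boundary contribution. Once these points are in place, the identification of $T_1$ is immediate.
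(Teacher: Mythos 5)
Your proposal is correct and follows essentially the same route as the paper, which obtains the corollary directly from the asymptotic identity (\ref{Eqn: Asymptotic Ginibre Variance}) together with Theorem~\ref{Thm: W^{1,p} Limits} applied with $n=p=2$, $t=N^{-1/2}$, and the normalized Gaussian kernel; you have merely written out the normalization and zero-extension details that the paper leaves implicit. Your constant $T_1 = K(2,2)/(2\pi)$ is consistent with the paper's subsequent identification $T_1 = 1/(4\pi)$ via the comparison with the known Ginibre result.
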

	Notice that similar results with $\varphi \in W^{1,2}$ sense for the Gaussian kernel $e^{-N {|x-y|}^2}$ were already built up in~~\cite{RV07}.
	Additionally, the authors of~~\cite{RV07} discussed other kernels $K(x,y)$ that are not of the form $K(x-y)$ but can be bounded by another kernel $J(x-y)$.
	For the Ginibre ensemble, in~~\cite{RV07-2}, it was shown that for $\varphi \in C_0 ^1(\mb{R}^2)$,
		\begin{equation}
			\lim_{N \to \infty} \mr{Var}(X_N(\varphi)) = \frac{1}{4\pi} {||\nabla \varphi ||}^2 _{L^2(\mb{R}^2)} + \frac{1}{2} {|| \varphi ||}^2 _{H^{1/2}( \pa B(0,1))}.
		\end{equation}
	So, $T_1$ in Corollary~\ref{Cor: Ginibre Sobolev} is $1/ 4\pi$. And the authors of~\cite{RV07-2} also built up a central limit theorem for the normalizations of $X_N(\varphi)$, which did not follow from the methods of~\cite{CL95,S00,AGZ10} because $\mr{Var}(X_N(\varphi))$ is bounded now.
	It would also be interesting to extend those results in~\cite{RV07-2} to all $\varphi \in W_0 ^{1,2} (\mb{R}^2)$, which requires a deeper analysis of the behavior of the kernel $K_N(x,y)$ for the Ginibre ensemble near the boundary $\pa B(0,1)$.

	Now, consider $\varphi = \mathds{1}_{\Omega}$ with $\overline{\Omega} \subset B(0,1) \subset \mb{R}^2$. Notice that in this case, ${|\varphi(x) - \varphi(y)|}^2 = |\varphi(x) - \varphi(y)|$. Therefore, if $\Omega$ is a set of finite perimeter (bounded Caccioppoli set), Corollary~\ref{Cor: Finite Perimeter Limit} together with (\ref{Eqn: Asymptotic Ginibre Variance}) will imply the following corollaries.
	\begin{corollary}\label{Cor: Ginibre Finite Perimeter 1}
		\textit{
		There is a dimensional constant $T_2>0$, such that
			\begin{equation}
				\lim_{N \to \infty} \frac{ \mr{Var}(X_N(\mathds{1}_{\Omega}))}{\sqrt{N}} = T_2 \cdot \mc{H}^{1}(\pa^* \Omega).
			\end{equation}
		}
	\end{corollary}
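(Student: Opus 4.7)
The plan is to feed $\varphi = \mathds{1}_\Omega$ into the asymptotic formula (\ref{Eqn: Asymptotic Ginibre Variance}) and then invoke Theorem~\ref{Thm: Main Finite Perimeter Limit}, which gives the precise leading-order behavior of $\mc{J}_t(\Omega)$ as $t \to 0^+$ for a set of finite perimeter. The key correspondence is $e^{-N|x-y|^2} = J((x-y)/t)$ with $J(z) = e^{-|z|^2}$ and $t = 1/\sqrt{N}$, which turns the large-$N$ variance asymptotics into small-$t$ nonlocal-energy asymptotics directly covered by Section~\ref{Sec: Limit BV}.

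First, using $|\mathds{1}_\Omega(x) - \mathds{1}_\Omega(y)|^2 = |\mathds{1}_\Omega(x) - \mathds{1}_\Omega(y)|$, the symmetry of the integrand in $x,y$, and the fact that $\overline\Omega \subset B(0,\lambda) \subset B(0,\lambda_+)$, formula (\ref{Eqn: Asymptotic Ginibre Variance}) reduces to
\begin{equation*}
\mr{Var}\bigl(X_N(\mathds{1}_\Omega)\bigr) = \frac{N^2}{\pi^2}\int_\Omega \int_{B(0,\lambda_+) \setminus \Omega} e^{-N|x-y|^2}\, dx\, dy + R_N.
\end{equation*}
I would then extend the inner integration from $B(0,\lambda_+) \setminus \Omega$ to all of $\Omega^c$ at the cost of a correction that is exponentially small in $N$: since $|x-y| \geq \lambda_+ - \lambda > 0$ whenever $x \in \Omega$ and $y \notin B(0,\lambda_+)$, the tail contributes at most $C(\Omega)\, N^2 e^{-N(\lambda_+ - \lambda)^2}$, which is absorbed into an updated remainder $R_N'$ that remains exponentially small.

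With $t = 1/\sqrt N$, the remaining double integral is exactly $\mc{J}_t(\Omega)$ for the radially symmetric kernel $J(z) = e^{-|z|^2}$. Since $\Omega$ is a set of finite perimeter, Theorem~\ref{Thm: Main Finite Perimeter Limit} (applied in dimension $n=2$) yields
\begin{equation*}
\mc{J}_t(\Omega) = \bigl(1 + o(1)\bigr) \cdot 2K(2)\, \bigl\|J(z)\cdot |z|\bigr\|_{L^1(\mb{R}^2)} \cdot \mc{H}^1(\pa^* \Omega) \cdot t^3
\end{equation*}
as $t \to 0^+$. Substituting $t^3 = N^{-3/2}$ and multiplying by the prefactor $N^2/\pi^2$ produces a leading term of order $\sqrt N$; dividing by $\sqrt N$ and letting $N \to \infty$ extracts the dimensional constant
\begin{equation*}
T_2 \;=\; \frac{2\, K(2)\, \|J(z)\cdot |z|\|_{L^1(\mb{R}^2)}}{\pi^2},
\end{equation*}
which can be evaluated explicitly using $K(2) = 2/\pi$ and the Gaussian moment $\int_{\mb{R}^2} |z|\, e^{-|z|^2}\, dz = \pi^{3/2}/2$.

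There is no genuine obstacle here; the argument is essentially a bookkeeping exercise that assembles already-proven pieces. The one point that requires a little care is tracking three simultaneous error sources — the exponentially small $R_N$ coming from (\ref{Eqn: Asymptotic Ginibre Variance}), the exponentially small tail correction arising from enlarging $B(0,\lambda_+) \setminus \Omega$ to $\Omega^c$, and the $o(t^3)$ Landau term from Theorem~\ref{Thm: Main Finite Perimeter Limit} — and checking that each is negligible compared to $\sqrt N$. The first two decay faster than any polynomial, while $N^2/\pi^2 \cdot o(t^3) = o(\sqrt N)$ by design, so the limit passes through cleanly.
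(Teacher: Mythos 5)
Your argument is correct and takes essentially the same route as the paper: the paper also sets $t=1/\sqrt{N}$, $J(z)=e^{-|z|^2}$ in (\ref{Eqn: Asymptotic Ginibre Variance}) and invokes the finite-perimeter limit, the only cosmetic difference being that it uses the localized Corollary~\ref{Cor: Finite Perimeter Limit} on $A=B(0,\lambda_+)$ rather than extending the inner integral to all of $\Omega^c$ and applying Theorem~\ref{Thm: Main Finite Perimeter Limit}, which costs you the (harmless, exponentially small) tail estimate. Your constant bookkeeping, including $K(2)=2/\pi$ and $\int_{\mb{R}^2}|z|e^{-|z|^2}\,dz=\pi^{3/2}/2$, is consistent with the paper's formulas.
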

	Or by Corollary~\ref{Cor: Finite Perimeter Limit of Two Sets}, we also have the following.
	\begin{corollary}\label{Cor: Ginibre Finite Perimeter 2}
		\textit{
		If $\Omega_1$ and $\Omega_2$ are two disjoint sets of finite perimeter with closures contained in $B(0,1) \subset \mb{R}^2$, then
			\begin{equation}
				\lim_{N \to \infty} \frac{ \mr{Cov}(X_N(\mathds{1}_{\Omega_1}) , X_N(\mathds{1}_{\Omega_2}) )}{\sqrt{N}} = T_2 \cdot \mc{H}^{1}(\pa^* \Omega_1 \cap \pa^* \Omega_2).
			\end{equation}
		}
	\end{corollary}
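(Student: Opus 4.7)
The plan is to reduce Corollary~\ref{Cor: Ginibre Finite Perimeter 2} to Corollary~\ref{Cor: Ginibre Finite Perimeter 1} via the polarization identity. Since $\Omega_1 \cap \Omega_2 = \emptyset$, we have $\mathds{1}_{\Omega_1 \cup \Omega_2} = \mathds{1}_{\Omega_1} + \mathds{1}_{\Omega_2}$, so
\begin{equation*}
\mr{Cov}(X_N(\mathds{1}_{\Omega_1}), X_N(\mathds{1}_{\Omega_2})) = \tfrac{1}{2}\bigl[\mr{Var}(X_N(\mathds{1}_{\Omega_1 \cup \Omega_2})) - \mr{Var}(X_N(\mathds{1}_{\Omega_1})) - \mr{Var}(X_N(\mathds{1}_{\Omega_2}))\bigr].
\end{equation*}
The union of two disjoint sets of finite perimeter is again a set of finite perimeter with closure in $B(0,1)$, so Corollary~\ref{Cor: Ginibre Finite Perimeter 1} applies to all three variances.

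Dividing by $\sqrt{N}$ and passing to the limit, this yields
\begin{equation*}
\lim_{N \to \infty} \frac{\mr{Cov}(X_N(\mathds{1}_{\Omega_1}), X_N(\mathds{1}_{\Omega_2}))}{\sqrt{N}} = \frac{T_2}{2}\bigl[\mc{H}^1(\pa^*(\Omega_1 \cup \Omega_2)) - \mc{H}^1(\pa^* \Omega_1) - \mc{H}^1(\pa^* \Omega_2)\bigr].
\end{equation*}
The bracketed expression is then simplified exactly as in the proof of Corollary~\ref{Cor: Finite Perimeter Limit of Two Sets}: Theorem~16.3 in \cite{M12} gives that $\pa^*(\Omega_1 \cup \Omega_2) = (\pa^* \Omega_1 \setminus \pa^* \Omega_2) \cup (\pa^* \Omega_2 \setminus \pa^* \Omega_1)$ up to a set of $\mc{H}^1$-measure zero, whence
\begin{equation*}
\mc{H}^1(\pa^*(\Omega_1 \cup \Omega_2)) = \mc{H}^1(\pa^* \Omega_1) + \mc{H}^1(\pa^* \Omega_2) - 2\, \mc{H}^1(\pa^* \Omega_1 \cap \pa^* \Omega_2).
\end{equation*}
Substituting produces the claimed asymptotic (up to the sign convention used for $T_2$).

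Alternatively, one can argue directly: the determinantal structure with a Hermitian projection kernel yields $\mr{Cov}(X_N(\mathds{1}_{\Omega_1}), X_N(\mathds{1}_{\Omega_2})) = -\int_{\Omega_1 \times \Omega_2} |K_N(x,y)|^2 \, d\mu_N(x)\, d\mu_N(y)$, and then one repeats the approximation scheme leading to (\ref{Eqn: Asymptotic Ginibre Variance}): choose $\lambda_+ \in (\lambda,1)$ containing $\overline{\Omega_1 \cup \Omega_2}$, replace $|K_N(x,y)|^2$ by $|e^{N x\bar y}|^2$ via Lemma~\ref{Lem: Exponential Remainder} at the cost of an error exponentially small in $N$, and observe that the reference measure collapses the product to $\frac{N^2}{\pi^2} \int_{\Omega_1 \times \Omega_2} e^{-N|x-y|^2}\, dx\, dy$; then Corollary~\ref{Cor: Finite Perimeter Limit of Two Sets} applied with $J(z) = e^{-|z|^2}$ and $t = 1/\sqrt{N}$ gives the $N^{-3/2}$ asymptotic of this integral, which together with the $N^2$ prefactor produces the stated $\sqrt{N}$ growth and matches the constant $T_2$ up to the same normalization fixed in Corollary~\ref{Cor: Ginibre Finite Perimeter 1}.

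The main technical point — modest, given the machinery already in place — is confirming that the exponential remainders surviving the Ginibre kernel approximation remain of order $o(\sqrt{N})$ so as not to pollute the $\sqrt{N}$ scaling; this is exactly the bookkeeping already performed for the variance in the derivation of (\ref{Eqn: Asymptotic Ginibre Variance}). Consequently the polarization route is preferable, as it extracts the corollary from already-proven statements with essentially no new analysis of $K_N$, reducing the entire content to a measure-theoretic identity on reduced boundaries.
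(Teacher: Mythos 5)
Your proposal is correct, and in fact you give both available routes. The paper's own (one-line) proof is your ``alternative'' argument: for disjoint sets the determinantal structure and the projection property of $K_N$ give $\mr{Cov}(X_N(\mathds{1}_{\Omega_1}),X_N(\mathds{1}_{\Omega_2})) = -\int_{\Omega_1\times\Omega_2}|K_N(x,y)|^2\,d\mu_N(x)\,d\mu_N(y)$, one replaces $|K_N|^2$ by the Gaussian kernel exactly as in the derivation of (\ref{Eqn: Asymptotic Ginibre Variance}), and then Corollary~\ref{Cor: Finite Perimeter Limit of Two Sets} with $t=1/\sqrt{N}$ gives the limit. Your primary (polarization) route is a genuine, if mild, variant: it performs the inclusion--exclusion at the level of the random variables and invokes only Corollary~\ref{Cor: Ginibre Finite Perimeter 1}, whereas the paper performs the same inclusion--exclusion at the level of the deterministic nonlocal functionals inside the proof of Corollary~\ref{Cor: Finite Perimeter Limit of Two Sets}; the two are algebraically identical, and your version has the small advantage of requiring no further analysis of $K_N$ beyond what Corollary~\ref{Cor: Ginibre Finite Perimeter 1} already encapsulates. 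One point you correctly flag deserves emphasis: both routes produce $-T_2\cdot\mc{H}^1(\pa^*\Omega_1\cap\pa^*\Omega_2)$, not $+T_2\cdot\mc{H}^1(\pa^*\Omega_1\cap\pa^*\Omega_2)$, consistent with the negative association of counts of disjoint sets in a determinantal process; with $T_2>0$ as fixed in Corollary~\ref{Cor: Ginibre Finite Perimeter 1}, the statement as printed is off by a sign, and your computation is the correct one.
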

	Let $\Omega = \Omega(\eta)$ be the Koch snowflake of scale $\eta>1$ in Section~\ref{Sec: Limit Fractal} with $\eta$ satisfying the condition (\ref{Eqn: Non-Lattice Condition}). 
	Then, by Theorem~\ref{Thm: Twistedflake limit} and (\ref{Eqn: Asymptotic Ginibre Variance}), we know the following. 
	\begin{corollary}\label{Cor: Ginibre Fractal}
		\textit{
		There is a positive constant $T( \Omega)$ depending on $\Omega$, which is also compatible with the Minkowski content of $\pa \Omega$, such that, 
			\begin{equation}
				\lim_{i \to \infty} \frac{ \mr{Var}(X_N(\mathds{1}_{\Omega}))}{{N}^{(\alpha(\eta) /2)}} = T( \Omega),
			\end{equation}
		where $\alpha(\eta) \in (1,2)$ satisfies (\ref{Eqn: Dimension of Twistedflake}) and is the Minkowski dimension of $\pa \Omega$.
		}
	\end{corollary}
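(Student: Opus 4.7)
The plan is to reduce Corollary~\ref{Cor: Ginibre Fractal} to a direct application of Theorem~\ref{Thm: Twistedflake limit} by routing through the Gaussian-kernel expansion of the Ginibre variance in (\ref{Eqn: Asymptotic Ginibre Variance}). Since $\overline{\Omega}$ is compact in $B(0,1)$, fix $\lambda\in(0,1)$ with $\overline{\Omega}\subset B(0,\lambda)$ and pick $\lambda_+\in(\lambda,1)$. Apply (\ref{Eqn: Asymptotic Ginibre Variance}) with $\varphi=\mathds{1}_{\Omega}$ and symmetrize the square $|\mathds{1}_{\Omega}(x)-\mathds{1}_{\Omega}(y)|^{2}$, using $\Omega\subset B(0,\lambda_+)$ to obtain
$$\mr{Var}\bigl(X_N(\mathds{1}_{\Omega})\bigr)=\frac{N^{2}}{\pi^{2}}\int_{\Omega}\int_{B(0,\lambda_+)\setminus\Omega}e^{-N|x-y|^{2}}\,dx\,dy+R_N,$$
where $R_N$ is exponentially small in $N$. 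Since $|x-y|\ge\lambda_+-\lambda$ whenever $x\in\Omega$ and $y\notin B(0,\lambda_+)$, the missing tail $\int_{\Omega}\int_{B(0,\lambda_+)^c}e^{-N|x-y|^2}\,dx\,dy$ is bounded by $|\Omega|\cdot C\cdot e^{-N(\lambda_+-\lambda)^{2}}$. Multiplying by $N^{2}/\pi^{2}$ still leaves an exponentially small remainder, so
$$\mr{Var}\bigl(X_N(\mathds{1}_{\Omega})\bigr)=\frac{N^{2}}{\pi^{2}}\int_{\Omega}\int_{\Omega^{c}}e^{-N|x-y|^{2}}\,dx\,dy+\widetilde R_N,$$
with $\widetilde R_N$ exponentially small in $N$.

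Next, take $J(z)=e^{-|z|^{2}}$, which is nonnegative, radially symmetric, in $L^{1}(\mb{R}^{2})$, and satisfies $\int_{\mb{R}^{2}}J(z)|z|^{2}\,dz<\infty$, so the hypotheses of Theorem~\ref{Thm: Twistedflake limit} (with $n=2$) are met. The theorem then yields
$$L:=\lim_{t\to 0^{+}}\frac{1}{t^{\,4-\alpha(\eta)}}\int_{\Omega}\int_{\Omega^{c}}e^{-|x-y|^{2}/t^{2}}\,dx\,dy\in(0,\infty),$$
where positivity comes from Corollary~\ref{Cor: Main Upper and Lower Limits}. Substituting $t=N^{-1/2}$ rewrites this as
$$\int_{\Omega}\int_{\Omega^{c}}e^{-N|x-y|^{2}}\,dx\,dy=L\,N^{-(4-\alpha(\eta))/2}+o\bigl(N^{-(4-\alpha(\eta))/2}\bigr).$$
Plugging this back and using $2-\tfrac{1}{2}(4-\alpha(\eta))=\alpha(\eta)/2$ gives
$$\lim_{N\to\infty}\frac{\mr{Var}(X_N(\mathds{1}_{\Omega}))}{N^{\alpha(\eta)/2}}=\frac{L}{\pi^{2}}=:T(\Omega)>0,$$
since $N^{2}\widetilde R_N=o(N^{\alpha(\eta)/2})$ for any $\alpha(\eta)\in(1,2)$. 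The comparability of $T(\Omega)$ with the Minkowski content of $\pa\Omega$ is inherited from $L$: Corollary~\ref{Cor: Main Upper and Lower Limits} sandwiches $L$ between positive multiples of $\underline{\mc{M}}^{\alpha(\eta)}(\pa\Omega)$ and $\overline{\mc{M}}^{\alpha(\eta)}(\pa\Omega)$, both positive and finite because Koch snowflakes are quasiballs and hence satisfy the noncollapsing density hypothesis.

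The argument is almost entirely bookkeeping once Theorem~\ref{Thm: Twistedflake limit} is in hand; there is no genuinely new probabilistic input. The only mildly delicate point is confirming that the two kinds of error — the replacement $K_N(x,y)\leadsto e^{N(x\bar y)}$ from Lemma~\ref{Lem: Exponential Remainder} and the truncation of the outer $y$-integral to $B(0,\lambda_+)$ — are both exponentially small in $N$ and therefore invisible at the polynomial scale $N^{\alpha(\eta)/2}$. Both of these estimates were already set up for the Sobolev statements preceding (\ref{Eqn: Asymptotic Ginibre Variance}), so no new work is required; the main obstacle, namely the existence of the limit of the nonlocal energy on the fractal $\Omega$, is precisely what Theorem~\ref{Thm: Twistedflake limit} supplies.
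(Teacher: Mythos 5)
Your proposal is correct and follows exactly the route the paper takes: the paper's entire proof of this corollary is the one-line observation that it follows from (\ref{Eqn: Asymptotic Ginibre Variance}) combined with Theorem~\ref{Thm: Twistedflake limit}, and your write-up simply makes explicit the substitution $t=N^{-1/2}$, $J(z)=e^{-|z|^2}$, the exponent arithmetic $2-\tfrac{1}{2}(4-\alpha(\eta))=\alpha(\eta)/2$, and the fact that the truncation and kernel-replacement errors are exponentially small and hence negligible at the polynomial scale. The positivity and Minkowski-content comparability via Corollary~\ref{Cor: Main Upper and Lower Limits} (using that the snowflake is an NTA domain) also matches the paper's remark following Theorem~\ref{Thm: Twistedflake limit}.
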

    Finally, if $\Omega \in \mb{R}^n$ satisfies the assumptions in Theorem~\ref{Thm: Main Upper Lower Bounds} (equivalently, Corollary~\ref{Cor: Main Upper and Lower Limits}), then we can also get part (ii) of Corollary~\ref{Cor: Main Probability Models 2} by using (\ref{Eqn: Asymptotic Ginibre Variance}) and Corollary~\ref{Cor: Main Upper and Lower Limits}.

    \bigskip

    {\bf Acknowledgements} 
    The author would like to thank Professor Paul Bourgade for introducing him to the probabilistic background of this problem and for the insightful discussions.
    The author's advisor, Professor Fang-Hua Lin, brought~~\cite{CRS10} and Professor Guido De Philippis brought~~\cite{De53,De54} to the author's attention, and the author also wants to thank them for their continuous help and support at Courant.
    The author also thanks Oren Yakir for his very helpful comments on the early draft of the manuscript.






\bibliographystyle{siam} 
\bibliography{LimitContentEJP.bib}

\end{document}